\documentclass{amsart}
\usepackage{amsmath}
\usepackage{amsthm}
\usepackage{amssymb}
\usepackage{makecell} 
\usepackage{appendix}
\usepackage[german,english]{babel}
\usepackage[utf8]{inputenc}
\usepackage[new]{old-arrows}
\usepackage{epsfig,fancybox,color}
\usepackage{setspace}
\usepackage[linesnumbered,ruled]{algorithm2e}
\usepackage{tikz}
\usepackage{tikz-cd}
\usetikzlibrary{cd}
\usepackage[toc,page]{}
\usepackage{adjustbox}
\usepackage{todonotes}
\usepackage{yhmath}
\usepackage{mathtools}
\usepackage{stmaryrd}
\usepackage{fancyvrb} \VerbatimFootnotes
\usepackage{hyperref}
\usepackage{wasysym} 
\definecolor{darkred}{RGB}{160,0,0}
\definecolor{darkblue}{RGB}{0,0,160}
\hypersetup{
  colorlinks=true,
  citecolor=darkgreen,
  citecolor=darkblue,
  filecolor=black,
  linkcolor=darkblue,
  urlcolor=darkblue
}

\usepackage{xr}
\usepackage[margin=1.25in]{geometry} 

\tikzcdset{scale cd/.style={every label/.append style={scale=#1},
    cells={nodes={scale=#1}}}}

\DeclareMathAlphabet\mathbfcal{OMS}{cmsy}{b}{n} 

\DeclareFontFamily{U}{mathx}{\hyphenchar\font45}
\DeclareFontShape{U}{mathx}{m}{n}{
      <5> <6> <7> <8> <9> <10>
      <10.95> <12> <14.4> <17.28> <20.74> <24.88>
      mathx10
      }{}
\DeclareSymbolFont{mathx}{U}{mathx}{m}{n}
\DeclareMathSymbol{\bigtimes}{1}{mathx}{"91}

\newcommand{\excise}[1]{}

\theoremstyle{plain}
\newtheorem{thm}{Theorem}[section]
\newtheorem{lem}[thm]{Lemma}

\newtheorem{cor}[thm]{Corollary}
\newtheorem{prop}[thm]{Proposition}

\newtheorem{introthm}{Theorem}[section]

\theoremstyle{definition}
\newtheorem{defn}[thm]{Definition}

\newtheorem{example}[thm]{Example}

\theoremstyle{remark}

\newtheorem{notation}[thm]{Notation}

\numberwithin{equation}{section}

\renewcommand{\>}{\rangle}
\newcommand{\<}{\langle}

\newcommand{\NN}{\mathbb{N}}
\newcommand{\QQ}{\mathbb{Q}}
\newcommand{\RR}{\mathbb{R}}

\newcommand{\A}{\mathbb{A}}

\DeclareMathOperator{\Ban}{\mathbf{Ban}} 

\DeclareMathOperator{\LH}{\mathbf{LH}} 

\DeclareMathOperator{\TT}{\mathbb{T}} 

\DeclareMathOperator{\D}{\mathbf{D}} 
\DeclareMathOperator{\Ho}{H} 

\DeclareMathOperator{\IndBan}{\mathbf{IndBan}} %

\DeclareMathOperator{\id}{id} 

\DeclareMathOperator{\shHom}{\underline{\mathcal{H}\kern -.5pt\textit{om}}} 
\DeclareMathOperator{\RshHom}{R\underline{\mathcal{H}\kern -.5pt\textit{om}}} 
\DeclareMathOperator{\shExt}{\underline{\mathcal{E}\kern -1,75pt\textit{xt}}} 

\DeclareMathOperator{\Sh}{Sh} 
\DeclareMathOperator{\Loc}{Loc} 

\DeclareMathOperator{\Mod}{\mathbf{Mod}} 

\DeclareMathOperator{\HS}{\mathcal{HS}} %

\DeclareMathOperator{\I}{\mathbf{I}} 

\DeclareMathOperator{\op}{op} 

\DeclareMathOperator{\Dcap}{\mathcal{\wideparen{D}}} 
\DeclareMathOperator{\Dcapbd}{\mathcal{\wideparen{D}}\kern 1,00pt^{b}} 

\DeclareMathOperator{\Spa}{Spa} 
\DeclareMathOperator{\dR}{dR} 
\DeclareMathOperator{\pdR}{pdR} 
\DeclareMathOperator{\proet}{\text{pro\'{e}t}} %
\DeclareMathOperator{\BB}{\mathbb{B}}
\DeclareMathOperator{\OB}{\cal{O}\kern -1,00pt\mathbb{B}} 
\DeclareMathOperator{\normalOB}{O\kern -1,00pt B} 
\DeclareMathOperator{\normalOA}{O\kern -1,00pt A} 
\DeclareMathOperator{\normalOtildeA}{O\kern -1,00pt {\tilde{A}}} 
\DeclareMathOperator{\OA}{\cal{O}\kern -1,00pt\mathbb{A}} 
\DeclareMathOperator{\OBcap}{\wideparen{\cal{O}\kern -1,00pt\mathbb{B}}} 
\DeclareMathOperator{\OBdR}{\cal{O}\kern -1,00pt\mathbb{B}_{\dR}} 
\DeclareMathOperator{\OXBdR}{\cal{O}_{X}\kern -1,00pt\mathbb{B}_{\dR}} 
\DeclareMathOperator{\OXxXBdR}{\cal{O}_{X\times X}\kern -1,00pt\mathbb{B}_{\dR}} 
\DeclareMathOperator{\OC}{\cal{O}\kern -0,90pt\mathbb{C}} %
\DeclareMathOperator{\CB}{\cal{C}\kern -1,00pt\mathbb{B}} 

\DeclareMathOperator{\Otheta}{\cal{O}\kern -1,00pt\theta} 
\DeclareMathOperator{\Ovartheta}{\cal{O}\kern -1,00pt\vartheta} 
\DeclareMathOperator{\Oiota}{\cal{O}\kern -1,00pt\iota} 

\DeclareMathOperator{\Der}{Der} 
\DeclareMathOperator{\TB}{\mathcal{T}\kern -1,00pt\mathbb{B}} 
\DeclareMathOperator{\OmegaB}{\Omega\kern -1,00pt\mathbb{B}} 

\DeclareMathOperator{\DcapB}{\Dcap\kern -1,00pt\BB}

\DeclareMathOperator{\RB}{R\kern -1,00pt\BB}
\DeclareMathOperator{\PB}{P\kern -1,00pt\BB}
\DeclareMathOperator{\rhoB}{\rho\kern -1,00pt\BB}

\DeclareMathOperator{\SB}{S\kern -1,50pt\BB} 

\DeclareMathOperator{\dRB}{\dR\kern -1,50pt\BB}
\DeclareMathOperator{\DDB}{\DD\kern -1,50pt\BB}

\DeclareMathOperator{\SolB}{\Sol\kern -0.5pt\BB} 
\DeclareMathOperator{\dRfunctorB}{\dRfunctor\kern -1.5pt\BB} 

\DeclareMathOperator{\Conn}{Conn} 
\DeclareMathOperator{\LocB}{\Loc\kern -1,50pt\BB} 
\DeclareMathOperator{\ConnOB}{\Conn\kern -1,50pt\OB} 

\DeclareMathOperator{\DB}{\mathcal{D}\kern -1,00pt\mathbb{B}} 
\DeclareMathOperator{\EB}{\mathcal{E}\kern -0.40pt\mathbb{B}} 

\DeclareMathOperator{\Sol}{\mathcal{S}\kern -1.0pt\textit{ol}} 
\DeclareMathOperator{\nSolB}{Sol\kern -0,5pt\mathbb{B}} 
\DeclareMathOperator{\Rec}{\mathcal{R}\kern -1.0pt\textit{ec}} 

\DeclareMathOperator{\R}{R} 

\newcommand{\heart}{\ensuremath\heartsuit}

\DeclareMathOperator{\DD}{\mathbb{D}} 

\newcommand{\cal}[1]{\mathcal{#1}}

\DeclareMathOperator{\rL}{L} 

\DeclareMathOperator{\isomap}{\stackrel{\cong}{\longrightarrow}} 

\DeclareMathOperator{\dRfunctor}{\textit{d}\kern +0.5pt\mathcal{R}} 

\DeclareMathOperator{\lh}{^{\small\heart}\kern -3.00pt}

\begin{document}

\selectlanguage{english}

\title{The $p$-adic Cauchy Theorem and Overconvergent Period Sheaves}

\author{Finn Wiersig}
\address{National University of Singapore}
\email{fwiersig@nus.edu.sg}
\date{\today}

\begin{abstract}
The classical $p$-adic Cauchy theorem asserts that formal solutions of ordinary $p$-adic differential equations are convergent. In this article we establish a geometric analogue of this result for arbitrary smooth rigid-analytic varieties. More precisely, we show that the horizontal sections functor defined using the overconvergent period sheaf $\OB_{\dR}^{\dag,+}$ agrees with Scholze's horizontal sections functor defined using $\OB_{\dR}^{+}$. Equivalently, every formal solution arising from Scholze's construction is already overconvergent. As an application, we identify Scholze's horizontal sections functor with the de Rham functor for $\Dcap$-modules on vector bundles with flat connection.
\end{abstract}

\maketitle


\tableofcontents


\section{Introduction}

\subsection{The $p$-adic Cauchy theorem}

The classical Cauchy theorem, and more generally the Cauchy--Kovalevskaya theorem asserts that complex analytic differential equations admit local analytic solutions at ordinary points. Equivalently, formal solutions are automatically convergent. We refer to~\cite[\S4.3]{HTTDmodules} for an overview of the classical theory.

The situation over non-Archimedean fields is more subtle. For example, the solution
$\exp(z)=\sum_{n\ge0} z^n / n!$
of the differential equation $f'=f$ converges only on a disc of finite radius. Nevertheless, the non-Archimedean Cauchy theorem, originating in work of Lutz~\cite{Lutz1937}, asserts that at ordinary points formal solutions remain convergent.\footnote{See~\cite[Appendix III, Remark~2]{DworkGerottoSullivanGfunctions} for historical remarks.}

The purpose of this article is to reinterpret and generalise this theorem in the framework of $p$-adic Hodge theory. Let $X$ be a smooth rigid-analytic variety over a complete discretely valued field $k$ of mixed characteristic $(0,p)$. Solving $p$-adic differential equations directly is problematic. If $\cal{E}$ is a vector bundle with flat connection on $X$, the sheaf of horizontal sections $\cal{E}^{\nabla=0}$ need neither be locally constant nor determine $\cal{E}$ up to isomorphism.

A fundamental theorem of Scholze~\cite[Theorem~7.6]{Sch13pAdicHodge}, resolves this issue. The horizontal-sections functor
\begin{equation*}
\HS\left(\cal{E}\right)
:=
\left(
\nu^{-1}\cal{E}
\otimes_{\nu^{-1}\cal{O}}
\OB_{\dR}^{+}
\right)^{\nabla=0}
\end{equation*}
defines a fully faithful embedding from vector bundles with flat connection on $X$ into the category of $\BB_{\dR}^{+}$-local systems.\footnote{More generally, Scholze considers filtered vector bundles with integrable connection satisfying Griffiths transversality. Throughout this article all filtrations are trivial.}

In recent work~\cite{WiersigPeriods,WiersigReconstruction} we introduced an overconvergent period sheaf
$\OB_{\dR}^{\dag,+}\subset\OB_{\dR}^{+}$.
Locally on $X_{\proet}$, the sheaf $\OB_{\dR}^{+}$ is described by formal power series, whereas $\OB_{\dR}^{\dag,+}$ is described by convergent power series. Since $\OB_{\dR}^{\dag,+}$ carries a natural connection, one obtains a second horizontal-sections functor
\begin{equation*}
\HS^{\dag}\left(\cal{E}\right)
:=
\left(
\nu^{-1}\cal{E}
\otimes_{\nu^{-1}\cal{O}}
\OB_{\dR}^{\dag,+}
\right)^{\nabla=0}.
\end{equation*}

The guiding principle of this article is that $\HS(\cal E)$ computes formal solutions of the differential equation defined by $\cal E$, whereas $\HS^{\dag}(\cal E)$ computes convergent solutions. Our main result identifies the two.

\begin{introthm}[Theorem~\ref{thm:Cauchy}]
\label{introthm:easycompatibility}
For every vector bundle with flat connection $\cal{E}$ on $X$, the natural morphism
\begin{equation*}
\HS^{\dag}\left(\cal{E}\right)
\otimes_{\BB_{\dR}^{\dag,+}}
\BB_{\dR}^{+}
\isomap
\HS\left(\cal{E}\right)
\end{equation*}
is an isomorphism of $\BB_{\dR}^{+}$-local systems.
\end{introthm}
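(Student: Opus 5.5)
The argument is local on $X_{\proet}$, so I will work over an affinoid perfectoid $U=\varprojlim U_i$ above a small affinoid $X_0=\mathrm{Spa}(R,R^+)\to\mathbb{T}^d$ (étale over a torus) on which $\cal{E}$ is free, with basis $e_1,\dots,e_r$. On such $U$ Scholze's computation gives
\[
\OB_{\dR}^{+}|_U\cong\BB_{\dR}^{+}|_U[[X_1,\dots,X_d]],\qquad X_j=T_j-[T_j^\flat],
\]
with $\nabla$ acting through $\partial/\partial X_j$. The description of $\OB_{\dR}^{\dag,+}$ from \cite{WiersigPeriods,WiersigReconstruction} that I will take as input is that the same coordinates identify it with the subring of power series $\sum_\alpha b_\alpha X^\alpha$ whose coefficients satisfy an overconvergence condition, i.e.\ the series converges on some polydisc of positive radius in the $X_j$, uniformly with respect to the ind-Banach structure on $\BB_{\dR}^{+}$. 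Both sides of the claimed map are then computed coordinatewise. A horizontal section of $\nu^{-1}\cal{E}\otimes\OB_{\dR}^{+}$ is a vector $f=\sum_\alpha f_\alpha X^\alpha$ with coefficients in $(\BB_{\dR}^{+})^r$ solving $\partial_j f+A_j f=0$, where the $A_j$ are the connection matrices. These are matrices over $R$, and $R$ embeds into $\OB_{\dR}^{+}$ via $T_j=[T_j^\flat]+X_j$. Flatness makes this system integrable, so a formal solution is determined by its constant term $f_0$, and every $f_0\in(\BB_{\dR}^{+})^r$ extends to a solution. This recovers Scholze's statement that $\HS(\cal{E})|_U$ is free of rank $r$ over $\BB_{\dR}^{+}$, via $f\mapsto f_0$.

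The key step is the Cauchy estimate: for every $f_0$ the formal solution actually lies in $\OB_{\dR}^{\dag,+}$. I will write the solution as $f=Y\cdot f_0$ with the fundamental matrix
\[
Y=\sum_{\alpha}\frac{(-1)^{|\alpha|}}{\alpha!}\,\big(\nabla^{\alpha}\text{ applied to the basis}\big)\big|_{T=[T^\flat]}\,X^{\alpha},
\]
namely the Taylor expansion of the parallel transport. The entries of $Y$ are $\partial^\alpha e_i$ expanded in the basis, with $T$ specialised to $[T^\flat]$, and then divided by $\alpha!$. As in the classical Lutz/Dwork argument, the estimate $|\nabla^\alpha e/\alpha!|\le C\rho^{-|\alpha|}$ for some $\rho>0$ follows from the recursion $G_{\alpha+\epsilon_j}=\partial_jG_\alpha+G_\alpha A_j$ together with Cauchy-type bounds on $R$: derivatives of elements of $R$ divided by factorials grow at most geometrically. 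This works because $R$ is affinoid, so every element of $R$ is analytic on a small polydisc around each point. So $Y$ has overconvergent entries, and since $Y$ is invertible (with inverse given by the dual connection), this step yields the result. I expect this to be the main obstacle. The delicate point is transporting the estimate through the specialisation $T\mapsto[T^\flat]+X$, i.e.\ checking that the map $R\to\OB_{\dR}^{\dag,+}$ really lands in convergent series with a uniform radius. I would handle it by noting that this map is exactly how $\OB_{\dR}^{\dag,+}$ acquires its $\cal{O}$-algebra structure, so the earlier results provide it, and then bounding the factorials using $|1/\alpha!|\le p^{|\alpha|/(p-1)}$.

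With the estimate in hand I will conclude as follows. $\HS^\dag(\cal{E})|_U$ consists of the $f=Yf_0$ with $f_0$ ranging over $(\BB_{\dR}^{\dag,+})^r$: constant terms of horizontal overconvergent sections are forced to lie in the horizontal part $\BB_{\dR}^{\dag,+}=(\OB_{\dR}^{\dag,+})^{\nabla=0}$. Hence $\HS^\dag(\cal{E})|_U$ is free of rank $r$ over $\BB_{\dR}^{\dag,+}$, with basis the columns of $Y$. The same columns form a $\BB_{\dR}^{+}$-basis of $\HS(\cal{E})|_U$, so the natural map is an isomorphism on $U$. Since such $U$ form a basis of $X_{\proet}$ and the map is defined globally, it is an isomorphism of sheaves, and in particular of $\BB_{\dR}^{+}$-local systems.
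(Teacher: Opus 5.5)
Your overall plan is the paper's, but the explicit fundamental matrix you write down is wrong. The plan, as in the paper, is to:
\begin{itemize}
\item work over $U$ above $\widetilde X$;
\item write down a basis of formal horizontal sections as Taylor series;
\item show by a Lutz--Dwork estimate (geometric growth of iterated covariant derivatives, together with $|\alpha!|\geq\varpi^{|\alpha|}$) that these series converge on a smaller polydisc;
\item conclude because the same basis spans both sides.
\end{itemize}
The matrix $Y$ is exactly the object your estimate is about. Specialising the coefficients $\nabla^{\alpha}(e_i)$ at $T=[T^{\flat}]$ while keeping the sign $(-1)^{|\alpha|}$ does not give horizontal sections.

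Already for $d=1$, $\mathcal{E}=\mathcal{O}e$ and $\nabla e=a\,e\,dT$, one has $\nabla^{2}e=(a'+a^{2})e$. Put $a_0=a([T^\flat])$ and $a_1=a'([T^\flat])$. Then your $Y$ is $1-a_0X+\tfrac12(a_0^2+a_1)X^2+\cdots$. The solution of $\partial_Xf=-af$ with $f(0)=1$ instead has $X^2$-coefficient $\tfrac12(a_0^2-a_1)$. Without the sign, your series is, up to transposition, the Taylor expansion of the inverse fundamental matrix, which encodes horizontal sections of the dual; that is presumably what your remark about invertibility reflects.

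There are two repairs, and your estimate covers both.
\begin{itemize}
\item Use the paper's unspecialised series $T(e_i)=\sum_\alpha\frac{(-X)^\alpha}{\alpha!}\nabla^\alpha(e_i)$. Here the coefficients $\nabla^\alpha(e_i)$ stay in $\nu^{-1}\mathcal{E}\otimes\OB_{\dR}^{\dag,+}$ and are not evaluated at $X=0$.
\item Specialise correctly. With the convention $\nabla_j e=eA_j$ for the row $e=(e_1,\dots,e_r)$, take $Y=\sum_\alpha\frac{X^\alpha}{\alpha!}H_\alpha([T^\flat])$, where $H_0=1$ and $H_{\alpha+\epsilon_j}=\partial_jH_\alpha-H_\alpha A_j$. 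This recursion comes from $\partial_jF=-A_jF$, and flatness makes it independent of the order of the $j$'s.
\end{itemize}

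The point you flag as delicate costs one line. $R$ is a Banach object, and $\OB_{\dR}^{\dag,+}(U)$ is the ind-object $\text{``}\varinjlim\text{''}_{q}B^{q}\langle X/p^{q}\rangle$. So the ind-Banach morphism $R\to\OB_{\dR}^{\dag,+}(U)$ factors boundedly through a single level. Composing with $X\mapsto 0$ then gives a bounded specialisation $R\to B^{q}$.

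Your Cauchy bound on $R$ is also cleaner in terms of operator norms. Since $\partial_j$ and $A_j$ are bounded on $R$, the recursion gives $\|H_\alpha\|\leq L^{|\alpha|}$ directly; pointwise analyticity is not the right input for a uniform radius.

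The paper avoids specialisation altogether. It works in $M^q=\mathcal{E}\otimes C^q$ with $C^q=B^q\langle Z/p^q\rangle$, and keeps $\nabla^{q,\alpha}(m)$ inside $M^q$, where it grows at most like a power of the operator norm of $\nabla^q$. Convergence then comes from $\|Z^\alpha\|_{C^r}=|p|^{r|\alpha|}$ on the smaller polydisc, once $|p|^{r}\|\nabla^q\|<\varpi$. This is your estimate carried out before, rather than after, evaluation at $Z=0$. It also packages the argument as a statement about abstract Banach algebras $B^q$, which is then applied to $\BB_{\dR}^{\dag,+}(U)$ and $\BB_{\dR}^{+}(U)$.
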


Thus every formal solution arising from Scholze's construction is already convergent. In this sense, Theorem~\ref{introthm:easycompatibility} may be regarded as a geometric form of the classical $p$-adic Cauchy theorem.

\subsection{Application to $\Dcap$-modules}

In previous work~\cite{WiersigReconstruction}
we constructed solution and de Rham functors for $\Dcap$-modules on rigid-analytic varieties,
\begin{equation*}
\SolB_{\pdR}^{\dag}
\colon
\D\left(\Dcap\right)^{\op}
\hookrightarrow
\D\left(\BB_{\pdR}^{\dag}\right),
\qquad
\dRfunctorB_{\pdR}^{\dag}
\colon
\D\left(\Dcap\right)
\hookrightarrow
\D\left(\BB_{\pdR}^{\dag}\right),
\end{equation*}
and proved that they are fully faithful on the category of $\cal C$-complexes. This category contains vector bundles with flat connection as well as many $\Dcap$-modules arising in the representation theory of $p$-adic analytic groups.

Theorem~\ref{introthm:easycompatibility} identifies these constructions with Scholze's horizontal-sections functor. More precisely, after fixing a model $\dRfunctorB_{\dR}^{\dag,+}$ whose scalar extension to $\BB_{\pdR}^{\dag}$ recovers $\dRfunctorB_{\pdR}^{\dag}$, we obtain:

\begin{introthm}[Theorem~\ref{thm:compatibility-scholze-cauchy}]
\label{introthm:easycompatibility-Dcap}
For every vector bundle with flat connection $\cal E$ on $X$, there is a canonical isomorphism of $\BB_{\dR}^{+}$-local systems
\begin{equation*}
\Ho^{-\dim X}
\left(
\dRfunctorB_{\dR}^{\dag,+}
\left(
\cal E
\right)
\right)
\otimes_{\BB_{\dR}^{\dag,+}}
\BB_{\dR}^{+}
\isomap
\HS\left(\cal E\right).
\end{equation*}
\end{introthm}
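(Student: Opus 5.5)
The plan is to deduce the statement from Theorem~\ref{introthm:easycompatibility}. It then suffices to identify $\Ho^{-\dim X}\left(\dRfunctorB_{\dR}^{\dag,+}(\cal E)\right)$ canonically with $\HS^{\dag}(\cal E)$, and to check that the cohomology sheaf in question is the only nonzero one.

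First, I unwind the definition of $\dRfunctorB_{\dR}^{\dag,+}$ on a vector bundle with flat connection. It should be a de Rham type complex, schematically
\begin{equation*}
\dRfunctorB_{\dR}^{\dag,+}(\cal E)\simeq \nu^{-1}\cal E\otimes_{\nu^{-1}\cal O}\left(\OB_{\dR}^{\dag,+}\otimes_{\nu^{-1}\cal O}\nu^{-1}\Omega^{\bullet}_X\right)[\dim X],
\end{equation*}
for instance obtained as $\OB_{\dR}^{\dag,+}\otimes^{\rL}_{\nu^{-1}\Dcap}\nu^{-1}\cal E$, computed by the Spencer resolution of $\OB_{\dR}^{\dag,+}$ as a right module, up to a shift. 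This uses that $\cal E$ is $\cal O$-coherent, so that $\Dcap\otimes_{\cal D}\cal E\cong\cal E$. I should also check that $\cal E$ is a $\cal C$-complex, which the introduction states. Because of the shift by $\dim X$, the degree $-\dim X$ cohomology is the kernel of the first differential. That kernel is exactly $\left(\nu^{-1}\cal E\otimes\OB_{\dR}^{\dag,+}\right)^{\nabla=0}=\HS^{\dag}(\cal E)$, and this gives a canonical map.

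Second, I show that the higher cohomology vanishes. Locally $\cal E$ is free and $X$ is étale over a torus, so I may work on an affinoid perfectoid cover. There the earlier work~\cite{WiersigPeriods} gives $\OB_{\dR}^{\dag,+}\cong\BB_{\dR}^{\dag,+}\{T_1,\dots,T_d\}$ with $\nabla$ acting as $\partial/\partial T_i$. This is an overconvergent Poincaré lemma. Combined with the $p$-adic Cauchy statement, namely that horizontal sections of $\cal E\otimes\OB_{\dR}^{\dag,+}$ locally trivialise it, $\cal E\otimes\OB_{\dR}^{\dag,+}\cong\HS^{\dag}(\cal E)\otimes_{\BB_{\dR}^{\dag,+}}\OB_{\dR}^{\dag,+}$, it reduces the complex to $\HS^{\dag}(\cal E)\otimes$ the de Rham complex of $\OB_{\dR}^{\dag,+}$. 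That complex is a resolution of $\BB_{\dR}^{\dag,+}$. This step is the main obstacle. It needs the local trivialisation, which I expect to be part of the proof of Theorem~\ref{thm:Cauchy}: it comes from the convergent solutions produced there, or from the isomorphism $\HS\cong\HS^\dag\otimes\BB_{\dR}^+$ together with Scholze's result that $\HS(\cal E)\otimes\OB_{\dR}^+\cong\cal E\otimes\OB_{\dR}^+$, descended to the overconvergent level by faithful flatness-type arguments.

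Finally, I tensor the identification $\Ho^{-\dim X}\cong\HS^{\dag}(\cal E)$ with $\BB_{\dR}^{+}$ over $\BB_{\dR}^{\dag,+}$ and compose with the isomorphism of Theorem~\ref{introthm:easycompatibility}. This yields the claimed map. Canonicity holds because every map is induced by the inclusion $\OB_{\dR}^{\dag,+}\subset\OB_{\dR}^{+}$ and by the definitions. I also check compatibility with the model $\dRfunctorB_{\dR}^{\dag,+}$ chosen in the paper, so that the scalar extension recovers $\dRfunctorB_{\pdR}^{\dag}$; this amounts to unwinding the definitions.
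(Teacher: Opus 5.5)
Your overall architecture is the paper's. You identify $\Ho^{-d}\bigl(\dRfunctorB_{\dR}^{\dag,+}(\cal E)\bigr)$ with $\HS^{\dag}(\cal E)$ as the kernel of the first differential of a de Rham complex in degrees $[-d,0]$, then apply $-\otimes_{\BB_{\dR}^{\dag,+}}\BB_{\dR}^{+}$ and Theorem~\ref{introthm:easycompatibility}. The complex you write down is the right one, but your justification for it is a guess, and it skips the real inputs.

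In the paper, $\dRfunctorB_{\dR}^{\dag,+}$ is not a derived tensor product with $\OB_{\dR}^{\dag,+}$. That would anyway need a side change, since $\OB_{\dR}^{\dag,+}$ is a left $\nu^{-1}\Dcap$-module. Instead it is $\Sol\circ\DD[d]$, with $\Sol(\cal M)=\rL\sigma^{*}\R\shHom_{\lambda^{-1}\Dcap}(\lambda^{-1}\cal M,\OB_{\dR}^{*,+})$ on the auxiliary site $X_{\proet}\times\NN_{\gg0}^{\op}$. Unwinding this takes three steps:
(i) Lemma~\ref{lem:solexplicitdescription-MIC} removes $\rL\sigma^{*}$, using $\sigma^{-1}\BB_{\dR}^{*,+}\cong\BB_{\dR}^{\dag,+}$, $\rL\sigma^{*}\OB_{\dR}^{*,+}\cong\OB_{\dR}^{\dag,+}$ and a reduction to $\cal M=\Dcap$.
(ii) For $\cal O$-locally free $\cal M$, the Spencer resolution of $\cal O$ turns $\R\shHom_{\nu^{-1}\Dcap}(\nu^{-1}\cal M,\OB_{\dR}^{\dag,+})$ into $\shHom_{\nu^{-1}\cal O}(\nu^{-1}\cal M\widehat{\otimes}\nu^{-1}\wedge^{\bullet}\cal T,\OB_{\dR}^{\dag,+})$. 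This is the de Rham complex of $\shHom_{\nu^{-1}\cal O}(\nu^{-1}\cal M,\OB_{\dR}^{\dag,+})$.
(iii) Bode's identification $\DD(\cal M)\cong\shHom_{\cal O}(\cal M,\cal O)$, together with $\DD^{2}\cong\id$, lets you apply (ii) to $\DD(\cal E)$. It identifies $\Sol(\DD\cal E)[d]$ with the de Rham complex of $\nu^{-1}\cal E\widehat{\otimes}\OB_{\dR}^{\dag,+}$.
Step (iii) is a genuine duality input that your sketch never mentions, so ``unwinding the definitions'' undersells it. One then passes to abstract sheaves via $|\cdot|$. This uses that $\I$ and $|\cdot|$ preserve kernels, and that $\widehat{\otimes}$ with the locally free $\nu^{-1}\cal E$ is the ordinary tensor product.

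Your second step is not needed for this statement. $\Ho^{-d}$ of a complex concentrated in $[-d,0]$ is the kernel of its first differential, whatever the higher cohomology is, so neither the Poincaré lemma nor a trivialisation enters. Your worry is nonetheless correct for the stronger claim that $\dRfunctorB_{\dR}^{\dag,+}(\cal E)$ is concentrated in one degree. There the differential is the twisted one, $\nabla_{\cal E}\otimes\id+\id\otimes\nabla_{\dR}^{\dag,+}$. The paper's argument (tensor the Poincaré lemma with the strongly flat $\nu^{-1}\DD(\cal M)$) therefore tacitly needs a local horizontal basis.

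Get that basis from your first option, not by descent from $\OB_{\dR}^{+}$. The estimate in the proof of Lemma~\ref{lem:T-converges-for-small-radius} gives $\|T(\iota^{q,r}(m_{j}))-\iota^{q,r}(m_{j})\|\le|p|^{r}\varpi^{-1}\max\{1,\|\nabla^{q}\|\}<1$ for $r\gg0$. By a Neumann series, the Taylor sections then form a $C^{r}$-basis of $M^{r}$. This gives a bounded isomorphism, with bounded inverse, between $M^{r}$ and $(C^{r})^{n}$ with the trivial connection. It transports the strict exactness of Theorem~\ref{thm:pioncare-lem}. Strictness, not mere exactness, is what you need, since the cohomology is taken in the left heart.
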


The proof reduces to identifying
\begin{equation*}
\Ho^{-\dim X}
\left(
\dRfunctorB_{\dR}^{\dag,+}
\left(
\cal E
\right)
\right)
\cong
\HS^{\dag}\left(\cal E\right),
\end{equation*}
which is achieved via a Poincaré lemma for the overconvergent period sheaf.

\begin{introthm}[Poincaré lemma, Theorem~\ref{thm:pioncare-lem}]
\label{thm:poincarelemma-intro}
The de Rham complex of $\OB_{\dR}^{\dag,+}$ is strictly exact:
\begin{equation*}
0
\to
\BB_{\dR}^{\dag,+}
\to
\OB_{\dR}^{\dag,+}
\stackrel{\nabla_{\dR}^{\dag,+}}{\to}
\OB_{\dR}^{\dag,+}
\widehat{\otimes}_{\nu^{-1}\mathcal O}
\nu^{-1}\Omega^1
\to
\cdots
\to
\OB_{\dR}^{\dag,+}
\widehat{\otimes}_{\nu^{-1}\mathcal O}
\nu^{-1}\Omega^{\dim X}
\to
0.
\end{equation*}
\end{introthm}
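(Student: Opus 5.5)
The Poincaré lemma is local on $X_{\proet}$, and strict exactness of a complex of sheaves of ind-Banach (or complete bornological) modules can be checked on a basis. I will therefore first reduce to the standard local situation. Assume $X=\Spa(A,A^{+})$ admits an étale map to a torus $\TT^{d}=\Spa(k\<T_1^{\pm1},\dots,T_d^{\pm1}\>)$ with $d=\dim X$. Take $U\in X_{\proet}$ affinoid perfectoid lying over the pro-étale torus cover $\tilde{\TT}^{d}$ adjoining all $p$-power roots of the $T_i$, with $\widehat{U}=\Spa(R,R^{+})$.

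On such $U$ the earlier description of $\OB_{\dR}^{\dag,+}$ from \cite{WiersigPeriods} gives an isomorphism of $\BB_{\dR}^{\dag,+}(U)$-algebras between $\OB_{\dR}^{\dag,+}|_{U}$ and a ring of overconvergent power series $\BB_{\dR}^{\dag,+}|_U\{Z_1,\dots,Z_d\}^{\dag}$, where $Z_i=T_i-[T_i^{\flat}]$. This is the convergent analogue of Scholze's $\OB_{\dR}^{+}|_U\cong\BB_{\dR}^{+}|_U[[Z_1,\dots,Z_d]]$. Under this identification the connection becomes $\nabla=\sum_i \partial/\partial Z_i\,dZ_i$, with $dZ_i=dT_i$ forming a basis of $\Omega^{1}$. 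The de Rham complex is then the Koszul complex of the commuting operators $\partial_{1},\dots,\partial_{d}$ on this ring, and it is the completed tensor product of $d$ one-variable complexes
\[0\to \BB\to \BB\{Z\}^{\dag}\xrightarrow{\partial_Z}\BB\{Z\}^{\dag}\to 0,\]
with $\BB=\BB_{\dR}^{\dag,+}|_U$. I would therefore argue by induction on $d$, or directly via an explicit contracting homotopy.

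The key step is the one-variable case. For exactness in the middle, write $f=\sum a_n Z^n$ and take the primitive $\int f=\sum a_n Z^{n+1}/(n+1)$. The homotopy $h(f)=\int f$, paired with the evaluation map $f\mapsto f(0)$, satisfies $\partial h=\id$ and $h\partial=\id-\mathrm{ev}_0$. In several variables one builds the standard homotopy for the Koszul complex out of the $h_i$ and $\mathrm{ev}_{0,i}$ in each variable. Strictness then follows once each $h_i$ and each $\mathrm{ev}_{0,i}$ is a bounded (continuous) operator on the overconvergent ring: a complex with a bounded contracting homotopy is strictly exact, and this passes to sheafification.

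The main obstacle is precisely the boundedness of the integration operator on overconvergent series. Division by $n+1$ multiplies norms by $|n+1|^{-1}\le (n+1)^{\,e}$-type growth, roughly $p^{v_p(n+1)}\le n+1$. This means $h$ does not preserve a single Banach ring of convergence radius $r$. It does, however, map radius-$r$ series to radius-$r'$ series for every $r'<r$, with a norm bound. Since the overconvergent ring is the colimit over radii $r>1$ (the $\dag$), $h$ is a bounded morphism of ind-Banach objects. I would verify this using the explicit Banach presentation of $\OB_{\dR}^{\dag,+}$ from \cite{WiersigPeriods}. One also has to check that the filtration and $t$-adic structure of $\BB_{\dR}^{+}$ are compatible, i.e.\ that the radius shrinking is uniform in the $\BB_{\dR}^{+}/\mathrm{Fil}^{m}$ layers. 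If the presentation is instead a limit over $m$ of colimits, I would prove the homotopy statement levelwise and then use exactness of the relevant limit, which is controlled because the transition maps are surjective.
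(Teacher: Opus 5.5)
Your argument is correct, but it obtains strictness by a different route than the paper. The paper reduces to $X$ étale over $\TT^{d}$ and reads off from Lemma~\ref{lem:localdescription-nablalaplus} that over $\widetilde{X}$ the complex is the colimit over $q$ of the de Rham complexes of $\BB_{\dR}^{q,+}\widehat{\otimes}_{k_{0}}k_{0}\langle Z/p^{q}\rangle$. It treats algebraic exactness as evident from this, and then upgrades it to strict exactness with an open mapping theorem \cite[Theorem 4.9]{Ba15}.

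You instead build a bounded contracting homotopy from integration and evaluation at $Z=0$, which makes the complex split exact over $\widetilde{X}$. Strictness is then automatic, and no open mapping theorem is needed, so you also avoid checking its hypotheses. Your version makes explicit the integration argument that the paper leaves implicit, and it gives the stronger statement that the complex is locally contractible. The paper's route is shorter once algebraic exactness is granted.

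One correction to your picture: the $\dag$ here does not mean overconvergence on a closed polydisc (radii $r>1$). It means germs at $Z=0$: locally $\OB_{\dR}^{\dag,+}\cong\varinjlim_{q}\BB_{\dR}^{q,+}\langle Z/p^{q}\rangle$, with radii $|p|^{q}\to 0$ and coefficient rings that also vary with $q$. Your mechanism is unaffected. For $f=\sum_{n}b_{n}(Z/p^{q})^{n}$ one has $\int_{0}^{Z}f=\sum_{n}b_{n}\frac{p^{n+q+1}}{n+1}(Z/p^{q+1})^{n+1}$. So integration is a $\BB_{\dR}^{q,+}$-linear map $\BB_{\dR}^{q,+}\langle Z/p^{q}\rangle\to\BB_{\dR}^{q+1,+}\langle Z/p^{q+1}\rangle$ of norm at most $1$ (since $v_{p}(n+1)\leq n$), and it is compatible with the transition maps. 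In the ind-category one then has $\frac{d}{dZ}\circ\int=\id$ and $\int\circ\frac{d}{dZ}=\id-\mathrm{ev}_{0}$, and the same works one variable at a time. Because these operators act only on the $Z$-variable and are linear over the coefficients, your concern about uniformity across the layers $\BB_{\dR}^{+}/\mathrm{Fil}^{m}$ does not arise.
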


Compare this with Scholze's $\BB_{\dR}^{+}$-linear Poincaré lemma~\cite[Corollary~6.13]{Sch13pAdicHodge}.

\subsection{Outline of the proof}

The key observation is that locally on $X_{\proet}$ the sheaf $\OB_{\dR}^{+}$ is described by formal power series, whereas $\OB_{\dR}^{\dag,+}$ is described by convergent power series. Consequently, the functors $\HS$ and $\HS^{\dag}$ may be interpreted as assigning formal and convergent solutions to a differential equation.
After translating the problem into this language, the proof follows classical arguments. The main additional ingredient is a higher-dimensional version of the convergence estimates appearing in the classical one-variable theory.


\subsection{Ground fields and rings}\label{subsec:conventions-reconstructionpaper}
Throughout, $k$ denotes a complete discrete valuation field of mixed characteristic
$(0,p)$ with perfect residue field $\kappa$.
Fix a uniformiser $\pi\in k$ and write $k^{\circ}\subseteq k$
for the ring of power-bounded elements. Set $k_{0}:=W(\kappa)[1/p]$.
$C$ is the completion of a fixed algebraic closure of $\overline{k}$ of $k$.


\subsection{Acknowledgements}

I thank Tomoyuki Abe and Konstantin Ardakov for helpful discussions.


\section{A $p$-adic Cauchy Theorem}
\label{sec:CauchyCauchy}

\subsection{Overconvergent period sheaves}

We recall the period sheaves introduced by Scholze~\cite{Sch13pAdicHodge}
and their overconvergent counterparts constructed in~\cite{WiersigPeriods}.

Let $X$ be a smooth rigid-analytic $k$-variety and let
$\nu \colon X_{\proet}\to X$
denote its pro-étale site. We refer to~\cite[\S 3]{Sch13pAdicHodge}
for the construction of $X_{\proet}$ and recall only that affinoid
perfectoid objects form a basis of the site.

\begin{example}
  The adic spaces
  \begin{equation*}
  \widetilde{\TT}_{j}^{d}:=\Spa\left(
  k\langle T_{1}^{\pm1/p^{j}},\dots,T_{d}^{\pm1/p^{j}}\rangle,
  k^{\circ}\langle T_{1}^{\pm1/p^{j}},\dots,T_{d}^{\pm1/p^{j}}\rangle
  \right)
  \end{equation*}
  define a pro-étale cover
  $\widetilde{\TT}^{d}:=
  \text{``}\varprojlim\text{''}_{j\in\NN}\TT_{j}^{d}
  \to
  \TT^{d}$.
  After base change to a perfectoid extension of $k$, this becomes an affinoid
  perfectoid object of $\TT^{d}_{\proet}$.
\end{example}

The period sheaves $\BB_{\dR}^{\dag,+}$ and $\OB_{\dR}^{\dag,+}$ are sheaves
of ind-Banach spaces on $X_{\proet}$. We work systematically in this setting,
which provides a convenient framework for studying their cohomology and is
essential for the $\Dcap$-module constructions considered later.

For the purposes of the present section, we only require two basic properties.
First, there is a canonical morphism of sheaves of rings
\begin{equation*}
\nu^{-1}\cal O
\widehat{\otimes}_{k_{0}}
\BB_{\dR}^{\dag,+}
\to
\OB_{\dR}^{\dag,+},
\end{equation*}
which exhibits $\OB_{\dR}^{\dag,+}$ as a completion of
$\nu^{-1}\cal O\widehat{\otimes}_{k_{0}}\BB_{\dR}^{\dag,+}$.
Second, $\OB_{\dR}^{\dag,+}$ admits a particularly simple local description.
Suppose that $X$ is affinoid and equipped with an étale morphism
$X\to \TT^{d}$.
Let $\widetilde X:=X\times_{\TT^{d}}\widetilde{\TT}^{d}$
be the induced pro-étale cover. Writing $Z=\left(Z_{1},\dots,Z_{d}\right)$, there is the isomorphism
\begin{equation*}
\BB_{\dR}^{\dag,+}|_{\widetilde X}
\left<
Z/p^{\infty}
\right>
\isomap
\OB_{\dR}^{\dag,+}|_{\widetilde X},
\qquad
Z{i}\mapsto T_{i}-[T_{i}^{\flat}]
\end{equation*}
of $\BB_{\dR}^{\dag,+}|_{\widetilde X}$-ind-Banach algebras,
see~\cite[Theorem~\ref*{thm:localdescription-of-OBla}]{WiersigPeriods}.
Thus, locally on the pro-étale site, $\OB_{\dR}^{\dag,+}$ is a ring of
convergent power series over $\BB_{\dR}^{\dag,+}$. This description is the
starting point for our interpretation of $\OB_{\dR}^{\dag,+}$ as a sheaf of
convergent solutions to differential equations.

\subsection{Comparison with $\BB_{\dR}^{+}$ and $\OB_{\dR}^{+}$}
\label{subsubsec:OBdRplus}

We now compare the overconvergent period sheaves with Scholze's period sheaves
$\BB_{\dR}^{+}$ and $\OB_{\dR}^{+}$ from~\cite[\S 6]{Sch13pAdicHodge}.

Since $\BB_{\dR}^{\dag,+}$ and $\OB_{\dR}^{\dag,+}$ are sheaves of
ind-Banach algebras, whereas $\BB_{\dR}^{+}$ and $\OB_{\dR}^{+}$ are sheaves of
abstract rings, we shall frequently apply the forgetful functor $|\cdot|$
| which sends an ind-Banach object to its underlying abstract object,
see Appendix~\ref{sec:forgetIB} for details.

Let $U\in X_{\proet}$ be an affinoid perfectoid with $\widehat U=\Spa\left(R,R^{+}\right)$.
By~\cite[Theorem~6.5]{Sch13pAdicHodge}, the sections of $\BB_{\dR}^{+}$
over $U$ are given by the ring $\BB_{\dR}^{+}(R,R^{+})$.
Similarly,~\cite[Theorem~\ref*{thm:subsections-periodsheaves-affperfd}]{WiersigPeriods}
identifies the sections of $\BB_{\dR}^{\dag,+}$ with
$\BB_{\dR}^{\dag,+}(R,R^{+})$.
The construction of~\cite{WiersigPeriods} yields a canonical morphism
\begin{equation*}
|\BB_{\dR}^{\dag,+}(R,R^{+})|
\to
\BB_{\dR}^{+}(R,R^{+}),
\end{equation*}
and hence a canonical morphism of sheaves of rings
$|\BB_{\dR}^{\dag,+}|\to \BB_{\dR}^{+}$.

The comparison between $\OB_{\dR}^{\dag,+}$ and $\OB_{\dR}^{+}$ is slightly
more involved. 

\begin{notation}
  We denote the $p$-adically completed tensor product by
  $\widehat{\otimes}^{(p)}$.
\end{notation}

\begin{defn}[\cite{Sch13pAdicHodgeErratum}]
The sheaf $\OB_{\dR}^{+}$ is the sheafification of the following presheaf on $X_{\proet}$:
\begin{equation*}
U=\text{``}\varprojlim_{i\in I}\text{"}U_i
\mapsto
\varinjlim_{i\in I}
\varprojlim_{j\in\NN}
\left.
\left(
\mathcal O^{+}(U_i)
\widehat\otimes_{W(\kappa)}^{(p)}
\A_{\inf}(U)
\right)[1/p]
\middle/
\left(\ker\theta\right)^j
\right.
\end{equation*}
\end{defn}

\begin{lem}\label{lem:defn-OBdRplus-bd}
  Assume that $X$ is affinoid and equipped with an étale morphism
  $X\to\TT^{d}$. Let $U\in X_{\proet}$ be affinoid perfectoid.
  Fix a pro-étale presentation $U=\text{"}\varprojlim\text{"}_{i\in I}U_{i}$
  and $i\gg0$. Then
  \begin{equation*}
    \left(\BB_{\dR}^{+}(U)/\left(\ker\theta\right)^{j}\right)
    \llbracket Z \rrbracket/ (Z)^{j}
    \isomap
    \left.
    \left(\mathcal{O}^{+}(U_{i})\widehat{\otimes}_{W(\kappa)}^{(p)}\A_{\inf}(U)\right)[1/p]
    \middle/
    \left( \ker\theta\right)^{j}
    \right.,\quad
    Z_{l}\mapsto T_{l}-\left[T_{l}^{\flat}\right]
  \end{equation*}
  is an isomorphism of seminormed $k$-vector spaces for every $j\in\NN$.
\end{lem}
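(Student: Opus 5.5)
The plan follows Scholze's proof of the local description of $\OB_{\dR}^{+}$ (\cite[Proposition~6.10]{Sch13pAdicHodge}). I would construct explicit mutually inverse ring maps, using the étale lifting property along nilpotent thickenings, and then check that both maps are bounded for the seminorms.

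Denote the left side by $L_{j}$ and the right side by $M_{j}$.

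\emph{Step 1: the forward map is well defined.} The map $\A_{\inf}(U)\to M_{j}$ extends to $\BB_{\dR}^{+}(U)$. Indeed, after inverting $p$ it factors through the completion $\BB_{\dR}^{+}(U)=\varprojlim_{m}\A_{\inf}(U)[1/p]/(\ker\theta)^{m}$, and $(\ker\theta)^{j}$ maps to zero in $M_{j}$. The element $T_{l}-[T_{l}^{\flat}]$ lies in $\ker\theta$, because $\theta([T_{l}^{\flat}])=T_{l}$ in $\widehat{\cal O}(U)$. Hence $(Z)^{j}$ maps into $(\ker\theta)^{j}=0$. This gives a ring homomorphism $L_{j}\to M_{j}$. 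It is visibly bounded, since it is given on a finite-rank free module over $\BB_{\dR}^{+}(U)/(\ker\theta)^{j}$ by bounded generators.

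\emph{Step 2: the inverse.} I would define $M_{j}\to L_{j}$ on the two tensor factors.
\begin{itemize}
\item On $\A_{\inf}(U)$ it is the obvious map into $\BB_{\dR}^{+}(U)/(\ker\theta)^{j}$.
\item On $\cal O(U_{i})$ I first send $T_{l}^{\pm1}\mapsto([T_{l}^{\flat}]+Z_{l})^{\pm1}$. This element is invertible because $Z_{l}$ and $\ker\theta$ are nilpotent in $L_{j}$.
\end{itemize}
For $i\gg0$ the map $U_{i}\to X\to\TT^{d}$ is étale, so $\cal O(U_{i})$ is étale over $k\langle T^{\pm1}\rangle$. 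The kernel of $L_{j}\to\widehat{\cal O}(U)=R$ is nilpotent, being generated by $\ker\theta$ and the $Z_{l}$. Therefore the map $\cal O(U_{i})\to R$ lifts uniquely to a homomorphism $\cal O(U_{i})\to L_{j}$ compatible with the given map on $k\langle T^{\pm1}\rangle$. I would realise this lift concretely by Newton iteration, writing $\cal O(U_{i})$ locally as a standard étale algebra; that makes the next step tractable. The two maps agree on $W(\kappa)$, so together they give a map from the algebraic tensor product.

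\emph{Step 3: extending over the completed tensor product (the main obstacle).} To pass to $\widehat\otimes^{(p)}$ and invert $p$, I must show two things.
\begin{itemize}
\item $\cal O^{+}(U_{i})$ lands in a bounded, $p$-adically complete lattice of $L_{j}$. My first attempt is to take the lattice $p^{-N}\left(\A_{\inf}(U)/(\ker\theta\cap\A_{\inf})^{j}\right)\llbracket Z\rrbracket/(Z)^{j}$. The Newton-iteration formula introduces only a denominator $p^{N}$, where $N$ depends on $j$ and on the discriminant of the étale presentation. This is where $i\gg0$ and the finiteness of $j$ are used.
\item The resulting map is bounded for the quotient seminorms.
\end{itemize}

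\emph{Step 4: the composites are identities.}
\begin{itemize}
\item $L_{j}\to M_{j}\to L_{j}$ fixes $\BB_{\dR}^{+}(U)/(\ker\theta)^{j}$, and it sends $Z_{l}\mapsto T_{l}-[T_{l}^{\flat}]\mapsto Z_{l}$.
\item $M_{j}\to L_{j}\to M_{j}$ is the identity on $\A_{\inf}(U)$ and on $T_{l}$. On $\cal O(U_{i})$ it is the identity by uniqueness of étale lifts along the nilpotent thickening $M_{j}\to R$. The extension to the completion is then the identity by continuity and density.
\end{itemize}
Since both maps are bounded and mutually inverse, the forward map is an isomorphism of seminormed $k$-vector spaces.
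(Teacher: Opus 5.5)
Your route is the paper's route. The paper's proof only points to the inverse built in Scholze's proof of \cite[Proposition~6.10]{Sch13pAdicHodge} and notes that it is bounded; you reconstruct that inverse. However, there is a genuine gap at a check your plan never makes. You need the map you build on $(\mathcal{O}^{+}(U_{i})\widehat{\otimes}^{(p)}_{W(\kappa)}\A_{\inf}(U))[1/p]$ in Steps~2--3 to kill $(\ker\theta)^{j}$, so that it descends to $M_{j}$. It does not. Your map sends $\xi\mapsto\xi$ and $T_{1}-[T_{1}^{\flat}]\mapsto Z_{1}$, and both elements lie in $\ker\theta$ on the source. Hence $\xi^{j-1}(T_{1}-[T_{1}^{\flat}])\in(\ker\theta)^{j}$ is sent to $\xi^{j-1}Z_{1}$. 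For $j\geq2$ and $d\geq1$ this is nonzero in $L_{j}$: $\xi$ is a non-zero-divisor and a non-unit in $\BB_{\dR}^{+}(U)$, so the $Z_{1}$-coefficient $\xi^{j-1}$ survives modulo $\xi^{j}$. Equivalently, your Step~1 map sends the nonzero element $\xi^{j-1}Z_{1}$ to $0$ in $M_{j}$, so it is not injective. The composites in Step~4 therefore involve an arrow $M_{j}\to L_{j}$ that does not exist. The statement fails as literally written, and the paper's citation-only proof does not address this either. Under $Z_{l}\leftrightarrow T_{l}-[T_{l}^{\flat}]$, the ideal $\ker\theta$ on the right corresponds to $(\xi,Z)$. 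So $(\ker\theta)^{j}$ corresponds to $(\xi,Z)^{j}$, not to $(\xi^{j})+(Z)^{j}$.

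The finite-level statement that Scholze's argument supports has left-hand side
\begin{equation*}
L_{j}':=\BB_{\dR}^{+}(U)\llbracket Z\rrbracket/(\xi,Z)^{j}\cong\bigoplus_{|\alpha|<j}\left(\BB_{\dR}^{+}(U)/\xi^{j-|\alpha|}\right)Z^{\alpha}.
\end{equation*}
With $L_{j}'$ in place of $L_{j}$, your plan goes through once you add the missing check. The composite of your map with $L_{j}'\to\widehat{\mathcal{O}}(U)$ is $\theta$: check this on $\A_{\inf}(U)$ and on $\mathcal{O}^{+}(U_{i})$, then use continuity. So $\ker\theta$ lands in $(\xi,Z)$, and $(\ker\theta)^{j}$ lands in $(\xi,Z)^{j}=0$. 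Since $(\xi,Z)^{2j-1}\subseteq(\xi^{j})+(Z)^{j}\subseteq(\xi,Z)^{j}$, the two inverse systems are cofinal. Hence the limit over $j$, and the boundedness used to construct $|\OB_{\dR}^{\dag,+}|\to\OB_{\dR}^{+}$, are unaffected.

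A smaller point concerns Step~2. Once rational localisations occur, $\mathcal{O}(U_{i})$ is not étale over $k\langle T^{\pm1}\rangle$ as an abstract ring. So the unique lift is not automatic, and there is no global standard-étale presentation on which to run Newton iteration. Do as Scholze does: factor $U_{i}\to\TT^{d}$ locally into rational embeddings and finite étale maps. For a rational localisation $A\to A\langle f/g\rangle$, use that $g$ becomes a unit in $L_{j}'$ and that $f/g$ is power-bounded there, being a lift from $\A_{\inf}(U)$ plus a nilpotent. The lift is then unique only among continuous lifts. Continuity of $\mathcal{O}(U_{i})\to L_{j}'$ also bounds the image of $\mathcal{O}^{+}(U_{i})$, which is exactly what Step~3 needs, with no discriminant estimate.
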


\begin{proof}
This is implicit in the proof of~\cite[Proposition~6.10]{Sch13pAdicHodge}.
The inverse map constructed there is bounded, hence the isomorphism is
compatible with the natural seminorms.
\end{proof}

Assume that $X$ is as in Lemma~\ref{lem:defn-OBdRplus-bd}. As in~\cite{WiersigPeriods},
we equip $\mathcal{O}^{+}\left(U_{i}\right)\widehat{\otimes}_{W(\kappa)}\A_{\inf}(U)$ with the
$\left(p,\xi\right)$-adic topology. There is a canonical morphism
\begin{equation}\label{eq:construct-OBdRdag-toOBdR}
  \mathcal{O}^{+}(U_{i})\widehat{\otimes}_{W(\kappa)}\A_{\inf}(U)
  \to
  \left.\left(\mathcal{O}^{+}(U_{i})\widehat{\otimes}_{W(\kappa)}^{(p)}\A_{\inf}(U)\right)[1/p]
  \middle/\left( \ker\Otheta_{\inf}\right)^{j}
  \right.,
\end{equation}
where the codomain carries the quotient seminorm.
Lemma~\ref{lem:defn-OBdRplus-bd} allows us to extend
\eqref{eq:construct-OBdRdag-toOBdR}
to a morphism
$|\OB_{\dR}^{\dag,+}| \to \OB_{\dR}^{+}$
of sheaves of $\nu^{-1}\mathcal{O}$-algebras.


\subsection{Formulation of the $p$-adic Cauchy Theorem}

The sheaf $\OB_{\dR}^{+}$ carries a canonical $\BB_{\dR}^{+}$-linear connection
\begin{equation*}
\nabla_{\dR}^{+}
\colon
\OB_{\dR}^{+}
\to
\OB_{\dR}^{+}
\otimes_{\nu^{-1}\cal O}
\nu^{-1}\Omega,
\end{equation*}
see~\cite[\S 6]{Sch13pAdicHodge}. Locally, this connection is given by formal differentiation.

\begin{lem}\label{lem:diff-OBdR+-cauchy}
  Let $X$ be affinoid and equipped with
  an étale map $X\to\TT^{d}$, giving rise to the
  pro-étale covering $\widetilde{X}:=X\times_{\TT^{d}}\widetilde{\TT}^{d}\to X$.
  Fix the following identification as in~\cite{Sch13pAdicHodgeErratum}:
  \begin{equation*}
    \BB_{\dR}^{+}|_{\widetilde X}
    \llbracket Z\rrbracket
    \cong
    \OB_{\dR}^{+}|_{\widetilde X},
    \qquad
    Z_l\mapsto T_l-[T_l^{\flat}],
  \end{equation*}
  Writing $dZ_{l}:=dT_{l}$, the connection $\nabla_{\dR}^{+}$ is given by
  \begin{equation*}
    \nabla_{\dR}^{+}\colon\OB_{\dR}^{+}\to\bigoplus_{i=1}^{d}\OB_{\dR}^{+}dZ_{i},\quad
      f\mapsto\sum_{i=1}^{d}\frac{d(f)}{dZ_{i}}dZ_{i}.
  \end{equation*}
\end{lem}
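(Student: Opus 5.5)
We plan to reduce the statement to the defining properties of $\nabla_{\dR}^{+}$ in~\cite[\S 6]{Sch13pAdicHodge} together with the explicit identification of Lemma~\ref{lem:defn-OBdRplus-bd}. Recall that $\nabla_{\dR}^{+}$ is the unique $\BB_{\dR}^{+}$-linear connection on $\OB_{\dR}^{+}$ extending the differential $d\colon\cal O\to\Omega^{1}$ on $\nu^{-1}\cal O$. By construction, it arises from the $\A_{\inf}$-linear derivation $d\otimes\id$ on $\mathcal O^{+}(U_i)\widehat\otimes^{(p)}_{W(\kappa)}\A_{\inf}(U)$. This derivation preserves $\ker\theta$ up to one power: by the Leibniz rule it maps $(\ker\theta)^{j}$ into $(\ker\theta)^{j-1}\otimes\Omega^{1}$. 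It therefore induces maps on the completed inverse limit, and these pass to the colimit and to sheafification. Since $\Omega^{1}_{X}$ is free on $dT_{1},\dots,dT_{d}$ (the map $X\to\TT^{d}$ is étale), we have $\OB_{\dR}^{+}\otimes_{\nu^{-1}\cal O}\nu^{-1}\Omega\cong\bigoplus_{l}\OB_{\dR}^{+}dT_{l}$. The content of the lemma is that the $dT_{l}$-component of $\nabla_{\dR}^{+}$ is $\partial/\partial Z_{l}$ under the isomorphism $Z_{l}\mapsto T_{l}-[T_{l}^{\flat}]$.

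The key computation goes as follows. The operator $\nabla_{\dR}^{+}$ kills $\BB_{\dR}^{+}$, and in particular kills $[T_{l}^{\flat}]$. It sends $T_{l}\in\nu^{-1}\cal O$ to $dT_{l}$. Hence
\[
\nabla_{\dR}^{+}(Z_{l})=\nabla_{\dR}^{+}\bigl(T_{l}-[T_{l}^{\flat}]\bigr)=dT_{l}=dZ_{l}.
\]
Define $D_{l}$ to be the $dT_{l}$-component of $\nabla_{\dR}^{+}$. Each $D_{l}$ is then a $\BB_{\dR}^{+}$-linear derivation of $\BB_{\dR}^{+}\llbracket Z\rrbracket$ with $D_{l}(Z_{m})=\delta_{lm}$. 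The Leibniz rule gives $D_{l}=\partial/\partial Z_{l}$ on polynomials $\BB_{\dR}^{+}[Z]$.

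The remaining step, and the only real obstacle, is passing from polynomials to formal power series. We plan to do this by working modulo the filtration. By Lemma~\ref{lem:defn-OBdRplus-bd}, sections over $\widetilde X$ (or any affinoid perfectoid over it) are the inverse limit over $j$ of $(\BB_{\dR}^{+}/(\ker\theta)^{j})\llbracket Z\rrbracket/(Z)^{j}$, and in this quotient every element is a polynomial in $Z$. On the other side, $D_{l}$ maps the ideal $(\ker\theta)^{j}=(\xi,Z_{1},\dots,Z_{d})^{j}$ into $(\ker\theta)^{j-1}$, because $D_{l}(\xi)=0$ and $D_{l}(Z_{m})=\delta_{lm}$. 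Consequently $D_{l}$ is continuous for the $\ker\theta$-adic topology, and so is $\partial/\partial Z_{l}$. Two continuous maps that agree on the dense subring $\BB_{\dR}^{+}[Z]$ agree everywhere, so $D_{l}=\partial/\partial Z_{l}$ on $\BB_{\dR}^{+}\llbracket Z\rrbracket$. Finally, both sides are morphisms of sheaves and affinoid perfectoids over $\widetilde X$ form a basis. The identity on sections therefore gives the identity of sheaf maps, which is the asserted formula.
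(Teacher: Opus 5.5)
Your argument is correct. Note that the paper gives no proof of this lemma. It records the formula as part of Scholze's construction in \cite[\S 6]{Sch13pAdicHodge} (``locally, this connection is given by formal differentiation''), in the same way that Lemma~\ref{lem:defn-OBdRplus-bd} is declared implicit in Scholze's Proposition~6.10.

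Your proposal therefore supplies a self-contained verification rather than following a route in the text. It uses only three inputs: $\nabla_{\dR}^{+}$ is $\BB_{\dR}^{+}$-linear, it satisfies the Leibniz rule, and it extends $d$ on $\nu^{-1}\mathcal{O}$. Together with the local identification, these pin down $\nabla_{\dR}^{+}$ via $\nabla_{\dR}^{+}(Z_l)=dT_l$, Leibniz on $\BB_{\dR}^{+}[Z]$, and adic continuity. The paper's citation buys brevity. Your route buys independence from the finer details of how Scholze builds the connection, and in effect shows that any connection with these formal properties must be $\sum_l \partial/\partial Z_l\, dZ_l$ locally.

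Two small points would tighten the last step.

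\begin{itemize}
\item The inclusion $D(I^{j})\subseteq I^{j-1}$ holds for every derivation $D$ and every ideal $I$, directly from the Leibniz rule. So you do not need the values $D_l(\xi)$ and $D_l(Z_m)$ there, and you may simply take $I=(Z_1,\dots,Z_d)$.
\item ``Two continuous maps agreeing on a dense subring agree'' requires the target to be separated, which you should state.
  \begin{itemize}
  \item With $I=(Z)$ this is immediate: an element of $\bigcap_j (Z)^j$ has all coefficients zero.
  \item With $I=\ker\theta=(\xi,Z)$ you additionally need $\bigcap_j \xi^{j}\BB_{\dR}^{+}(U)=0$.
  \end{itemize}
\end{itemize}

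Concretely, for $f\in\BB_{\dR}^{+}(U)\llbracket Z\rrbracket$ with truncation $f_{<j}$, one has $f-f_{<j}\in (Z)^{j}$. Hence $(D_l-\partial/\partial Z_l)(f)=(D_l-\partial/\partial Z_l)(f-f_{<j})\in (Z)^{j-1}$ for every $j$, so it vanishes.
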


Lemma~\ref{lem:diff-OBdR+-cauchy} implies that
$\nabla_{\dR}^{+}$ restricts to a
$|\BB_{\dR}^{\dag,+}|$-linear connection
\begin{equation*}
|\nabla_{\dR}^{\dag,+}|
\colon
|\OB_{\dR}^{\dag,+}|
\to
|\OB_{\dR}^{\dag,+}|
\otimes_{\nu^{-1}\cal O}
\nu^{-1}\Omega.
\end{equation*}
We may therefore form horizontal sections with coefficients in either
$\OB_{\dR}^{+}$ or $|\OB_{\dR}^{\dag,+}|$. The main result of this section
is the following comparison theorem.

\begin{thm}[$p$-adic Cauchy theorem]
\label{thm:Cauchy}
Let $\cal E$ be a vector bundle with flat connection on $X$.
Then the canonical morphism
\begin{equation*}
\left(
\nu^{-1}\cal E
\otimes_{\nu^{-1}\cal O}
|\OB_{\dR}^{\dag,+}|
\right)^{\nabla=0}
\isomap
\left(
\nu^{-1}\cal E
\otimes_{\nu^{-1}\cal O}
\OB_{\dR}^{+}
\right)^{\nabla=0}
\end{equation*}
is an isomorphism of $\BB_{\dR}^{+}$-local systems.
\end{thm}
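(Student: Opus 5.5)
The claim is local on $X$, so I will prove it on a cover. Both sides are sheaves on $X_{\proet}$ and the morphism between them is canonical. Hence it suffices to check that it is an isomorphism after restriction to $\widetilde X$, where $X$ is affinoid, étale over $\TT^{d}$, and $\cal E$ is free with basis $e_1,\dots,e_r$. Since affinoid perfectoids $U$ over $\widetilde X$ form a basis, it is enough to work with sections over such $U$.

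On $U$ I use two local descriptions:
\begin{itemize}
\item Lemma~\ref{lem:diff-OBdR+-cauchy} identifies sections of $\OB_{\dR}^{+}$ with $\BB_{\dR}^{+}(U)\llbracket Z\rrbracket$.
\item The local description from~\cite{WiersigPeriods} identifies sections of $|\OB_{\dR}^{\dag,+}|$ with convergent series $|\BB_{\dR}^{\dag,+}(U)\langle Z/p^{\infty}\rangle|$.
\end{itemize}
The connection is formal differentiation $\partial_l=d/dZ_l$. Write $\nabla_{\partial_l}e=A_l e$, with $A_l$ matrices over $\cal O(X)$. Via $T_l=[T_l^{\flat}]+Z_l$ these become convergent power series in $Z$ whose coefficients lie in $\BB_{\dR}^{\dag,+}(U)$, after passing to $\cal O^{+}(U_i)\widehat\otimes\A_{\inf}$.

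A horizontal section $f=\sum_m f_m e_m$ then satisfies the system $\partial_l f=-A_l^{T}f$ for $l=1,\dots,d$, which is integrable by flatness. The formal Cauchy step is as follows. Expand $f=\sum_{\alpha}c_\alpha Z^\alpha$. The system forces the recursion
\begin{equation*}
(\alpha_l+1)\,c_{\alpha+e_l}=-\sum_{\beta\le\alpha}(A_l)_{\alpha-\beta}\,c_\beta ,
\end{equation*}
so $f$ is determined by $c_0$, and flatness guarantees consistency. Hence restriction to $Z=0$ identifies formal solutions with $\BB_{\dR}^{+}(U)^{r}$. The same recursion shows that convergent solutions map injectively. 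Injectivity of the whole map also follows from injectivity of $|\OB_{\dR}^{\dag,+}|\to\OB_{\dR}^{+}$, which holds locally as convergent series into formal series.

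Surjectivity amounts to two things:
\begin{itemize}
\item the formal solution with initial value $c_0\in\BB_{\dR}^{\dag,+}(U)^r$ converges in $\BB_{\dR}^{\dag,+}\langle Z/p^{\infty}\rangle$;
\item $\BB_{\dR}^{\dag,+}\otimes\BB_{\dR}^{+}$ accounts for arbitrary $c_0\in\BB_{\dR}^{+}$.
\end{itemize}
For the second point, note that the solutions are linear in $c_0$. The fundamental solution matrix $Y$ with $Y(0)=I$ therefore has entries in $\BB_{\dR}^{\dag,+}$ once the first point is known. Then every formal solution is $Y c_0$, and this exhibits $\HS(\cal E)=\HS^{\dag}(\cal E)\otimes\BB_{\dR}^{+}$ as free on the columns of $Y$. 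That also gives the local-system statement.

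\textbf{Main obstacle: convergence of $Y$.} I would follow the classical majorant argument of Dwork--Robba type, in several variables. Let $\|A_l\|_\rho\le M$ be the sup norm on a polydisc of radius $\rho$. The iterated derivatives satisfy $\frac{1}{\alpha!}\partial^\alpha Y=\sum(\text{products of }A\text{'s and their derivatives})$ divided by $\alpha!$. Using the Cauchy estimates $\|\partial^\beta A/\beta!\|\le M\rho^{-|\beta|}$, the non-archimedean triangle inequality, and induction on $|\alpha|$, I aim for a bound
\begin{equation*}
\|c_\alpha\|\le C\,\bigl(\max(M,\rho^{-1})\bigr)^{|\alpha|}\,\bigl|\alpha!\bigr|^{-1}.
\end{equation*}
Since $|\alpha!|^{-1}\le p^{|\alpha|/(p-1)}$, this yields convergence on a smaller polydisc of radius $\rho'=p^{-1/(p-1)}\min(\rho,M^{-1})$.

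I need this at the level of ind-Banach norms, so that the series lies in $\BB_{\dR}^{\dag,+}\langle Z/p^{n}\rangle$ for some $n$, and this is the delicate point. The radius must be compatible with the overconvergence parameter $p^{\infty}$, meaning some finite $p^n$. I would achieve this by rescaling $Z\mapsto p^nZ$, which makes $M$ small, and by working with the $(p,\xi)$-adic Banach structures on each $\BB_{\dR}^{\dag,+}$ stage. These structures must be uniform in the stage so that the colimit statement holds.
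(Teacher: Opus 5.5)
Your proposal is correct and is essentially the paper's argument. The columns of your fundamental matrix $Y$, normalised by $Y(0)=I$, are exactly the paper's Taylor series $T(m_{j})=\sum_{\alpha}\frac{(-Z)^{\alpha}}{\alpha!}\nabla^{\alpha}(m_{j})$; the paper proves their convergence in $B^{r}\langle Z/p^{r}\rangle$ for $r\gg0$ via $|\alpha!|\ge\varpi^{|\alpha|}$, and concludes as you do that both sides are free on this common basis (Proposition~\ref{prop:Mq-horizontals-sections-after-Mr}, Corollary~\ref{cor:dager-formal-cauchy-formalism}).

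Your two-radius estimate (derivatives bounded on the original polydisc where $d/dZ$ has norm $\rho^{-1}$, then shrinking below $\varpi\min(\rho,M^{-1})$) is the form in which the bound actually works. The paper's Lemma~\ref{lem:T-converges-for-small-radius} instead bounds $\nabla^{r}$ on $C^{r}$ by $\max\{1,\|\nabla^{q}\|\}$, overlooking that $d/dZ$ has norm $|p|^{-r}$ on $B^{r}\langle Z/p^{r}\rangle$. Your concern about uniformity across stages is unnecessary, since the connection matrices lie in a single $C^{q}$ and $Y$ lands in a single $C^{n}$.
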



\subsection{Proof of Theorem~\ref{thm:Cauchy}}
\label{subsec:proofTheoremcompatible}

Throughout this subsection we suppress the ind-Banach structures and omit the
functor $|\cdot|$ from the notation.

The proof follows the classical argument~\cite[Theorem~9.6.1]{KedlaysBookDiffEq}.
Working locally on $X$, we choose a basis of the vector bundle with connection
$\cal{E}$. This induces bases in
$\nu^{-1}\cal E
\otimes_{\nu^{-1}\cal O}
\OB_{\dR}^{\dag,+}$
and
$\nu^{-1}\cal E
\otimes_{\nu^{-1}\cal O}
\OB_{\dR}^{+}$.
Using the Taylor series construction recalled Definition~\ref{defn:Taylorseries}),
we obtain an explicit basis of horizontal sections
$\left(
\nu^{-1}\cal E
\otimes_{\nu^{-1}\cal O}
\OB_{\dR}^{+}
\right)^{\nabla=0}$.
The main point is to show that these formal Taylor series are in fact
convergent. This follows from the convergence estimates established in
Proposition~\ref{prop:Mq-horizontals-sections-after-Mr}. Consequently, the
same basis already lies in
$\left(
\nu^{-1}\cal E
\otimes_{\nu^{-1}\cal O}
\OB_{\dR}^{\dag,+}
\right)^{\nabla=0}$.
It follows that both sides of Theorem~\ref{thm:Cauchy} are locally free and
that the comparison morphism identifies distinguished bases. The theorem is
therefore reduced to the convergence estimates for the associated Taylor
series.


\subsubsection{Convergent solutions}\label{subsubsec:convergingsolutions}

Fix a direct system $B^{0}\to B^{1}\to B^{2}\to\dots$ of commutative $k_{0}$-Banach algebras
such that, for all $q\in\NN$ and $f_{1},f_{2}\in B^{q}$, $\|f_{1}f_{2}\|\leq\|f_{1}\|\|f_{2}\|$.
Let $d\in\NN$, $Z=\left(Z_{1},\dots,Z_{d}\right)$, and $C^{q}:=B^{q}\left\<Z/p^{q}\right\>$
for all $q\in\NN$. We equip these with the bounded $B^{q}$-linear differentials
\begin{equation}\label{eq:differential-on-OB}
  d^{q}\colon
  C^{q}
  \cong
  B^{q}\widehat{\otimes}_{k_{0}}k_{0}\left\<Z/p^{q}\right\>
  \stackrel{\id\widehat{\otimes}d}{\longrightarrow}
  B^{q}\widehat{\otimes}_{k_{0}}\Omega_{k_{0}\left\<Z/p^{q}\right\>/k_{0}}
  =:\Omega_{C^{q}/B^{q}}.
\end{equation}
In the following, we fix a natural number $q\in\NN$.

\begin{defn}\label{defn:diff-Cq-module}
  A \emph{differential $C^{q}$-Banach module} $M^{q}$ is a
  finite free $C^{q}$-module equipped with a
  $B^{q}$-linear connection
  $\nabla^{q}\colon M^{q} \to \Omega_{C^{q}/B^{q}} \otimes_{C^{q}} M^{q}$.
\end{defn}

From now on, we fix a differential $C^{q}$-module $M^{q}$ with connection $\nabla^{q}$.

\begin{notation}
  For every $j=1,\dots,d$, $\nabla_{j}^{q}$ is the composition
  \begin{equation*}
    M^{q}
    \stackrel{\nabla^{q}}{\longrightarrow}
    \Omega_{C^{q}/B^{q}}\otimes_{C^{q}}M^{q}
    \cong\bigoplus_{l=1}^{d}M^{q}dZ_{l}
    \stackrel{\tau_{j}}{\longrightarrow}
    M^{q}dZ_{j}
    \cong M^{q}
  \end{equation*}
  where $\tau_{j}$ denotes the projection.
\end{notation}

\begin{defn}
  The \emph{$B^{q}$-module of horizontal sections} is $M^{q,\nabla=0}:=\ker\nabla^{q}$.
\end{defn}

\begin{lem}\label{lem:horizontalsections-via-ddZj}
  Let $m\in M^{q}$. Then $\nabla^{q}(m)=0$ if and only if $\nabla_{j}^{q}(m)=0$ for all $j=1,\dots,d$.
  In particular, $C^{q,\nabla=0}=B^{q}$ as $B^{q}$-Banach modules,
  where $C^{q}$ carries the connection $\nabla^{q}=d^{q}$.
\end{lem}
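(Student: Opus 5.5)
The first assertion follows from the shape of the module of differentials; the second requires an explicit computation in $C^{q}=B^{q}\left\<Z/p^{q}\right\>$.

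\emph{First assertion.} By \eqref{eq:differential-on-OB}, $\Omega_{C^{q}/B^{q}}$ is obtained by base change from $\Omega_{k_{0}\left\<Z/p^{q}\right\>/k_{0}}$. The latter is free on $dZ_{1},\dots,dZ_{d}$. Hence $\Omega_{C^{q}/B^{q}}\otimes_{C^{q}}M^{q}\cong\bigoplus_{l=1}^{d}M^{q}dZ_{l}$, and this decomposition is the one used to define the $\nabla_{j}^{q}$. Under this isomorphism, $\nabla^{q}(m)$ corresponds to the tuple $\left(\nabla_{1}^{q}(m),\dots,\nabla_{d}^{q}(m)\right)$. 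An element of a direct sum vanishes if and only if each component vanishes, so $\nabla^{q}(m)=0$ if and only if $\nabla_{j}^{q}(m)=0$ for all $j$.

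\emph{Second assertion.} Take $M^{q}=C^{q}$ with $\nabla^{q}=d^{q}$. Then $\nabla_{j}^{q}=\partial/\partial Z_{j}$, acting termwise on a convergent series
\begin{equation*}
f=\sum_{\alpha\in\NN^{d}}b_{\alpha}Z^{\alpha},\qquad b_{\alpha}\in B^{q},\qquad \|b_{\alpha}\|p^{-q|\alpha|}\to0.
\end{equation*}
To justify this, I would first check that $\partial/\partial Z_{j}$ is defined and bounded on $k_{0}\left\<Z/p^{q}\right\>$. One has $|\alpha_{j}|\leq1$, and $\|Z^{\alpha-e_{j}}\|=p^{q}\|Z^{\alpha}\|$ because $\|Z^{\alpha}\|=p^{-q|\alpha|}$. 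Hence $\partial/\partial Z_{j}$ has operator norm at most $p^{q}$. Its completed tensor product with $\id_{B^{q}}$ is therefore still given coefficientwise: $\partial f/\partial Z_{j}=\sum_{\alpha}\alpha_{j}b_{\alpha}Z^{\alpha-e_{j}}$. This follows by continuity from the dense subspace of polynomials.

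\emph{Computing the kernel.} Suppose $\partial f/\partial Z_{j}=0$ for all $j$. Uniqueness of coefficients in $B^{q}\left\<Z/p^{q}\right\>$ gives $\alpha_{j}b_{\alpha}=0$ for all $\alpha$ and $j$. Since $k_{0}$ has characteristic $0$, each $\alpha_{j}$ is invertible in $B^{q}$ whenever $\alpha_{j}\neq0$. So $b_{\alpha}=0$ for every $\alpha\neq0$, and the kernel is exactly the constants $B^{q}\cdot1$. Conversely, $B^{q}$ is killed by $d^{q}$ because $d^{q}$ is $B^{q}$-linear.

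\emph{Norms.} It remains to see that $C^{q,\nabla=0}=B^{q}$ as Banach modules, not merely as abstract modules. Let $C^{q}$ carry the Gauss norm $\|f\|=\sup_{\alpha}\|b_{\alpha}\|p^{-q|\alpha|}$, and give the kernel the subspace norm. The inclusion $B^{q}\to C^{q}$, $b\mapsto b\cdot1$, is then isometric, since the constant term has $|\alpha|=0$. So the identification holds isometrically.

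The only point requiring care is that termwise differentiation and the uniqueness of coefficients remain valid after the completed tensor product. I would handle this by working with the explicit Gauss-norm description of $B^{q}\left\<Z/p^{q}\right\>$ as null sequences of coefficients.
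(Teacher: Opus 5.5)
Your proposal is correct. The paper states this lemma without proof, treating it as immediate from the decomposition $\Omega_{C^{q}/B^{q}}\otimes_{C^{q}}M^{q}\cong\bigoplus_{l=1}^{d}M^{q}dZ_{l}$ and from coefficientwise differentiation on $B^{q}\langle Z/p^{q}\rangle$. Your argument, including the characteristic-$0$ step and the isometric embedding of the constants $B^{q}\subseteq C^{q}$, is exactly that omitted verification.
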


\begin{defn}\label{defn:Taylorseries}
  For any element $m\in M^{q}$,
  \begin{equation*}
    T\left(m\right):=\sum_{\alpha\in\NN^{d}}
    \frac{(-Z)^{\alpha}}{\alpha!}\nabla^{q,\alpha}\left(m\right)\in M^{q},
  \end{equation*}
  whenever the series converges.
  Here, $\nabla^{q,\alpha}:=\left(\nabla_{1}^{q}\right)^{\alpha_{1}}\circ\cdots\circ\left(\nabla_{d}^{q}\right)^{\alpha_{d}}$,
  $\alpha!:=\alpha_{1}!\cdots\alpha_{d}!$, and
  $|\alpha|:=\alpha_{1}+\cdots\alpha_{d}$
  for every $\alpha=\left(\alpha_{1},\dots,\alpha_{d}\right)\in\NN^{d}$.
\end{defn}

\begin{lem}\label{lem:T-additive-multiplicative}
  Consider a differential $C^{q}$-Banach module $M^{q}$. When the relevant series converge,
  $T\left(m+n\right)=T\left(m\right)+T\left(n\right)$ and
  $T(gm)=T(g)T(m)$ for all $g\in C^{q}$ and $m,n\in M^{q}$.
 \end{lem}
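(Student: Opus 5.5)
Additivity is immediate: each $\nabla^{q,\alpha}$ is additive, so $T(m+n)$ and $T(m)+T(n)$ are the same formal sum. When $T(m)$ and $T(n)$ converge, the partial sums of $T(m+n)$ are sums of partial sums of the two convergent series, and therefore converge to $T(m)+T(n)$.

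Multiplicativity rests on a Leibniz rule for the commuting operators $\nabla^{q}_{j}$. First, $\nabla^{q}$ is a connection, so $\nabla_{j}^{q}(gm)=\partial_{j}(g)\,m+g\,\nabla_{j}^{q}(m)$, where $\partial_{j}:=d/dZ_{j}$ is the $j$-th component of $d^{q}$ (cf. Lemma~\ref{lem:horizontalsections-via-ddZj}). Flatness makes the $\nabla_{j}^{q}$ commute, at least in the situation relevant for Theorem~\ref{thm:Cauchy}; I would assume integrability here as implicitly intended. Induction on $|\alpha|$ then gives
\begin{equation*}
\nabla^{q,\alpha}(gm)=\sum_{\beta+\gamma=\alpha}\frac{\alpha!}{\beta!\,\gamma!}\,\partial^{\beta}(g)\,\nabla^{q,\gamma}(m).
\end{equation*}
Substituting this into the definition of $T(gm)$ yields
\begin{equation*}
T(gm)=\sum_{\alpha}\sum_{\beta+\gamma=\alpha}\frac{(-Z)^{\beta}}{\beta!}\partial^{\beta}(g)\cdot\frac{(-Z)^{\gamma}}{\gamma!}\nabla^{q,\gamma}(m),
\end{equation*}
which is formally the Cauchy product of the series $T(g)$ and $T(m)$.

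The remaining point, and the only real obstacle, is justifying the rearrangement in the Banach module $M^{q}$. Here $M^{q}$ is finite free over $C^{q}$, so it carries a norm with $\|hx\|\le\|h\|\|x\|$ for $h\in C^{q}$, since $B^{q}$ is submultiplicative and hence so is $C^{q}$. Over the non-archimedean field $k_{0}$, a series converges exactly when its terms tend to zero. Suppose $T(g)$ and $T(m)$ converge. Then their terms $a_{\beta}$ and $b_{\gamma}$ tend to $0$, so the double family $a_{\beta}b_{\gamma}$ tends to $0$ as $|\beta|+|\gamma|\to\infty$. Hence the family is summable and may be summed in any order. Grouping by $\alpha=\beta+\gamma$ shows that $T(gm)$ converges and equals $T(g)T(m)$. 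I expect the main care to lie in this non-archimedean summability argument, and in making sure that ``when the relevant series converge'' refers to $T(g)$ and $T(m)$, with the convergence of $T(gm)$ deduced rather than assumed.
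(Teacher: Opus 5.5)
Your argument is correct and is the natural one. The paper states this lemma without proof, and the Leibniz formula followed by a non-archimedean Cauchy-product argument is what it implicitly relies on. Deducing the convergence of $T(gm)$ from that of $T(g)$ and $T(m)$, rather than assuming it, is the right reading of ``when the relevant series converge''.

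One refinement: the flatness you add is not needed. Definition~\ref{defn:diff-Cq-module} does not impose it, so as written you prove a slightly weaker statement than the lemma.

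The multinomial Leibniz formula holds for the fixed-order composite $\nabla^{q,\alpha}=(\nabla^{q}_{1})^{\alpha_{1}}\circ\cdots\circ(\nabla^{q}_{d})^{\alpha_{d}}$ for any connection. The one-variable formula $(\nabla^{q}_{j})^{a}(gm)=\sum_{b+c=a}\binom{a}{b}\partial_{j}^{b}(g)\,(\nabla^{q}_{j})^{c}(m)$ uses only the Leibniz rule for $\nabla^{q}_{j}$. Apply it first to $(\nabla^{q}_{d})^{\alpha_{d}}$, then to $(\nabla^{q}_{d-1})^{\alpha_{d-1}}$, and so on. The operators acting on $g$ and on $m$ then come out in exactly the prescribed order, giving $\partial^{\beta}(g)\,\nabla^{q,\gamma}(m)$ with no commutation performed.

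Equivalently, in your induction on $|\alpha|$, always split off the leftmost nonzero factor $\nabla^{q}_{j}$ with $j=\min\{i:\alpha_{i}>0\}$. Every $\gamma\leq\alpha-e_{j}$ then has $\gamma_{i}=0$ for $i<j$, so $\nabla^{q}_{j}\circ\nabla^{q,\gamma}=\nabla^{q,\gamma+e_{j}}$ holds on the nose. Flatness is genuinely needed only in Lemma~\ref{lem:T-of-m-is-horizontal-section}, where $\nabla^{q}_{j}\circ\nabla^{q,\alpha}$ must be rewritten as $\nabla^{q,\alpha+e_{j}}$ for arbitrary $j$.

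Finally, $T(g)$ converges for every $g=\sum_{\beta}c_{\beta}(Z/p^{q})^{\beta}\in C^{q}$. Its $\alpha$-th term is $(-1)^{|\alpha|}\sum_{\beta\geq\alpha}\binom{\beta}{\alpha}c_{\beta}(Z/p^{q})^{\beta}$, of norm at most $\sup_{\beta\geq\alpha}\|c_{\beta}\|$, which tends to $0$ as $|\alpha|\to\infty$. Since $\sum_{\alpha\leq\beta}(-1)^{|\alpha|}\binom{\beta}{\alpha}=0$ for $\beta\neq0$, the sum is the constant term $c_{0}$. So the only real hypothesis is convergence of $T(m)$, and the identity reads $T(gm)=c_{0}\,T(m)$.
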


 \begin{lem}\label{lem:T-of-m-is-horizontal-section}
  Given a differential $C^{q}$-Banach module $M^{q}$, consider $m\in M^{q}$ such that
  $T(m)$ converges. Then $T(m)\in M^{q,\nabla=0}$.
  In particular, for $M^{q}=C^{q}$ with connection $\nabla^{q}=d^{q}$ and $g\in C^{q}$,
  $T(g)\in B^{q}$.
\end{lem}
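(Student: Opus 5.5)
We must show: if $T(m)$ converges, then $\nabla_j^{q}(T(m))=0$ for each $j$. By Lemma~\ref{lem:horizontalsections-via-ddZj}, this gives $T(m)\in M^{q,\nabla=0}$. The plan is to differentiate the series termwise and observe a telescoping.

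First we justify termwise differentiation. Each $\nabla_j^{q}$ is a bounded $B^{q}$-linear operator on $M^{q}$. Choosing a basis of $M^{q}$, it is $d/dZ_j$ plus a matrix of elements of $C^{q}$, and $d/dZ_j$ is bounded on $C^{q}=B^{q}\langle Z/p^{q}\rangle$ with norm at most $p^{q}$. Hence $\nabla_j^{q}$ commutes with convergent sums: $\nabla_j^{q}(T(m))=\sum_\alpha \nabla_j^{q}\bigl(\tfrac{(-Z)^\alpha}{\alpha!}\nabla^{q,\alpha}(m)\bigr)$. Strictly, the series $T(m)$ converges in $M^{q}$, but the differentiated series might a priori only converge after applying a bounded map. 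That is fine: the image of a convergent series under a bounded map converges to the image of its sum. So the sum of the differentiated terms exists and equals $\nabla_j^{q}(T(m))$.

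Next we compute. The Leibniz rule for the connection gives
\[
\nabla_j^{q}\Bigl(\tfrac{(-Z)^\alpha}{\alpha!}n\Bigr)=-\tfrac{(-Z)^{\alpha-e_j}}{(\alpha-e_j)!}n+\tfrac{(-Z)^\alpha}{\alpha!}\nabla_j^{q}(n),
\]
with the first term absent if $\alpha_j=0$. Flatness says the $\nabla_i^{q}$ pairwise commute, so $\nabla_j^{q}\nabla^{q,\alpha}=\nabla^{q,\alpha+e_j}$. Substituting $n=\nabla^{q,\alpha}(m)$, the term $-\tfrac{(-Z)^{\alpha-e_j}}{(\alpha-e_j)!}\nabla^{q,\alpha}(m)$ coming from $\alpha$ cancels against the second term coming from $\beta=\alpha-e_j$. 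To rearrange legitimately, write the differentiated sum as the difference of two series indexed by $\NN^{d}$ after reindexing. Both series have the same terms as the convergent series $T(m)$, and therefore they converge individually. The cancellation then gives $\nabla_j^{q}(T(m))=0$.

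The main subtlety is flatness. Definition~\ref{defn:diff-Cq-module} does not literally require integrability, yet commutativity of the $\nabla_i^{q}$ is needed both for $\nabla^{q,\alpha}$ to behave well and for the cancellation above. I would either use that the connections arising from $\cal E$ are flat, or add that assumption to the definition. I also expect the convergence bookkeeping to need care, but the reindexing argument handles it.

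For the final claim, take $M^{q}=C^{q}$ with $\nabla^{q}=d^{q}$. Then $T(g)\in C^{q,\nabla=0}=B^{q}$ by Lemma~\ref{lem:horizontalsections-via-ddZj}. Here $d^{q}$ is automatically flat, since partial derivatives commute.
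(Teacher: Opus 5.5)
Your argument is the paper's: differentiate $T(m)$ termwise, expand each term by the Leibniz rule, cancel the two resulting sums after the shift $\beta=\alpha-e_j$, and conclude with Lemma~\ref{lem:horizontalsections-via-ddZj}. You are more careful than the written proof, which drops the minus sign on the first sum and uses $\nabla_j^{q}\circ\nabla^{q,\alpha}=\nabla^{q,\alpha+e_j}$ without comment.

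Your flatness caveat is right, and integrability is needed for the statement itself, not only for this proof. Take $d=2$ and $M^{q}=C^{q}e$ with $\nabla^{q}(ge)=d^{q}(g)\,e+gZ_{1}\,dZ_{2}\,e$, so that $[\nabla_1^q,\nabla_2^q]=\id$. Then $(\nabla_2^q)^k(e)=Z_1^k e$, hence $T(e)=\sum_k\frac{(-Z_2)^k}{k!}(Z_1-Z_1)^k\,e=e$, and this series converges for $q\geq1$. But $\nabla_2^q(e)=Z_1e\neq0$, so $T(e)$ is not horizontal. Integrability should therefore be added to Definition~\ref{defn:diff-Cq-module}; it holds in the application to $\mathcal{E}$.

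One step is justified incorrectly. After reindexing, the two series do not have the same terms as $T(m)$. Both become $\sum_\beta w_\beta$ with $w_\beta=\frac{(-Z)^\beta}{\beta!}\nabla^{q,\beta+e_j}(m)$, which are the terms of $T(\nabla_j^q m)$. The terms of $T(m)$ are instead $x_\alpha=\frac{(-Z)^\alpha}{\alpha!}\nabla^{q,\alpha}(m)$.

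Convergence of $\sum_\beta w_\beta$ still follows from that of $T(m)$, but it needs an estimate. First, $x_{\beta+e_j}=\frac{-Z_j}{\beta_j+1}\,w_\beta$. Second, multiplication by $Z_j$ scales the Gauss norm on $C^{q}=B^{q}\langle Z/p^{q}\rangle$ by exactly $|p|^{q}$, and hence also the basis norm on $M^{q}$. Together these give $\|w_\beta\|=|\beta_j+1|\,|p|^{-q}\,\|x_{\beta+e_j}\|\leq |p|^{-q}\,\|x_{\beta+e_j}\|\to0$. With this estimate in place of the ``same terms'' sentence, your proof is complete.
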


\begin{proof}
  By Lemma~\ref{lem:horizontalsections-via-ddZj}, the first statement follows from
  \begin{align*}
    \frac{d}{dZ_{j}}\left(T\left(m\right)\right)
    =\sum_{\substack{\alpha\in\NN^{d} \\ \alpha_{i}\neq0}}
    \frac{\left(-Z\right)^{\alpha-e_{i}}}{\left(\alpha-e_{i}\right)!}\nabla^{q,\alpha}\left(m\right)
    +\sum_{\alpha\in\NN^{d}}\frac{(-Z)^{\alpha}}{\alpha!}\nabla^{q,\alpha+e_{i}}\left(m\right)
    =0
  \end{align*}
  for all $j=1,\dots,d$. Here, $e_{i}$ is the $i$th unit vector.
  The final sentence follows from Lemma~\ref{lem:horizontalsections-via-ddZj}.
\end{proof}

We would like to vary $q$, in the following sense.

\begin{lem}\label{lem:M-vary-to-Mr}
  For any natural number $r\geq q$,
  \begin{equation*}
    M^{q}\widehat{\otimes}_{C^{q}}C^{r}
    \rightarrow
    \Omega_{C^{q}/B^{q}}\widehat{\otimes}_{C^{q}}
    M^{q}\widehat{\otimes}_{C^{q}}C^{r}, \quad
    m\widehat{\otimes}g
    \mapsto
    \left(\nabla^{q}(m)\widehat{\otimes}g\right)
    +\left( m\widehat{\otimes}d^{r}(g)\right),
  \end{equation*}
  is a $B^{r}$-linear map which we denote
  by $\delta^{r}$. Then the following composition, denoted by $\nabla^{r}$,
  \begin{equation*}
    M^{r}
    :=M^{q}\widehat{\otimes}_{C^{q}}C^{r}
    \stackrel{\delta^{r}}{\rightarrow}
    \Omega_{C^{q}/B^{q}}\widehat{\otimes}_{C^{q}}
    M^{q}\widehat{\otimes}_{C^{q}}C^{r}
    \cong
    \Omega_{C^{q}/B^{q}}\widehat{\otimes}_{C^{q}}
    M^{r}
  \end{equation*}
  is a connection. That is,
  $\left(M^{r},\nabla^{r}\right)$ is a differential $C^{r}$-module.
\end{lem}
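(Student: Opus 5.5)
The claim is that $\delta^{r}$ is a well-defined, bounded, $B^{r}$-linear map on the completed tensor product, and that the induced $\nabla^{r}$ satisfies the Leibniz rule over $C^{r}$ relative to $d^{r}$. I would first reduce to a finite free situation. Choose a $C^{q}$-basis $e_{1},\dots,e_{n}$ of $M^{q}$. Then $M^{q}\widehat{\otimes}_{C^{q}}C^{r}\cong (C^{r})^{n}$, because completed tensor products commute with finite direct sums. The connection $\nabla^{q}$ is then determined by the connection matrices $A_{l}\in M_{n}(C^{q})$ defined by
\begin{equation*}
\nabla^{q}_{l}(e_{i})=\sum_{k}(A_{l})_{ki}e_{k}.
\end{equation*}
Note also that $\Omega_{C^{q}/B^{q}}\widehat{\otimes}_{C^{q}}C^{r}\cong\bigoplus_{l}C^{r}dZ_{l}\cong\Omega_{C^{r}/B^{r}}$, since $dZ_{l}$ is the same symbol at level $q$ and at level $r$.

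Next I would define $\delta^{r}$ on the algebraic tensor product $M^{q}\otimes_{C^{q}}C^{r}$ by the stated formula, and check that it is well defined, i.e.\ compatible with the relation $(hm)\otimes g=m\otimes(hg)$ for $h\in C^{q}$. This follows from the Leibniz rule for $\nabla^{q}$ together with $d^{r}|_{C^{q}}=d^{q}$. That last identity holds because both differentials are $B$-linear extensions of formal differentiation in $Z$, and the transition map $C^{q}\to C^{r}$ is the map $B^{q}\langle Z/p^{q}\rangle\to B^{r}\langle Z/p^{r}\rangle$ induced by $B^{q}\to B^{r}$. The resulting map is $B^{r}$-linear, since both $\nabla^{q}\otimes g$ and $d^{r}$ are.

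In the chosen basis the formula is explicit:
\begin{equation*}
\delta^{r}\Big(\sum_{i}e_{i}\otimes g_{i}\Big)=\sum_{l}\Big(\sum_{i}e_{i}\otimes\partial_{l}g_{i}+\sum_{i,k}e_{k}\otimes(A_{l})_{ki}g_{i}\Big)dZ_{l}.
\end{equation*}

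The main obstacle is boundedness, which is needed for $\delta^{r}$ to extend continuously to the completion; the basis description handles it directly. Multiplication by the images of the $A_{l}$ in $C^{r}$ is bounded by submultiplicativity. Each $\partial_{l}=d/dZ_{l}$ is bounded on $C^{r}=B^{r}\langle Z/p^{r}\rangle$, as recorded in \eqref{eq:differential-on-OB}: on the monomial $(Z/p^{r})^{\alpha}$ it produces $\alpha_{l}p^{-r}(Z/p^{r})^{\alpha-e_{l}}$, whose norm is at most $p^{r}$ times that of the input. Hence on $(C^{r})^{n}$ the map $\delta^{r}$ is just $g\mapsto(\partial_{l}g+A_{l}g)_{l}$, which is manifestly bounded and defined on the whole completed module. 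So one may simply take this formula as the definition of $\delta^{r}$, and the previous paragraph shows it agrees with the stated one on elementary tensors.

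Finally I would verify the Leibniz rule $\nabla^{r}(hx)=d^{r}(h)\otimes x+h\nabla^{r}(x)$ for $h\in C^{r}$ and $x\in M^{r}$. In coordinates this is immediate from the Leibniz rule for each $\partial_{l}$ and the $C^{r}$-linearity of multiplication by $A_{l}$. Together with the finite freeness of $M^{r}$ over $C^{r}$, this shows that $(M^{r},\nabla^{r})$ is a differential $C^{r}$-module. If the definition is also meant to require flatness, it transfers as well: the integrability relations $\partial_{l}A_{m}-\partial_{m}A_{l}+[A_{l},A_{m}]=0$ hold in $C^{q}$, hence also in $C^{r}$.
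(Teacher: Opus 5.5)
Your proposal is correct and follows the paper's route: the paper's proof only says that one directly verifies that $\delta^{r}$ is well-defined and that $\nabla^{r}$ satisfies the Leibniz rule, and your two checks make this explicit (the $C^{q}$-balancedness via the Leibniz rule for $\nabla^{q}$ and $d^{r}|_{C^{q}}=d^{q}$, and the coordinate formula $g\mapsto(\partial_{l}g+A_{l}g)_{l}$). One small remark: since $M^{q}$ is finite free, the algebraic tensor product $M^{q}\otimes_{C^{q}}C^{r}\cong(C^{r})^{n}$ is already complete, so boundedness is not needed to extend $\delta^{r}$, though it is still worth recording because $\|\nabla^{r}\|$ is used in Lemma~\ref{lem:T-converges-for-small-radius}.
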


\begin{proof}
  One directly verifies that $\delta^{r}$ is well-defined and $\nabla^{r}$ satisfies the Leibniz rule.
\end{proof}

The following lemma is why we would like to vary $q$.

\begin{lem}\label{lem:T-converges-for-small-radius}
  Fix the notation from Lemma~\ref{lem:M-vary-to-Mr}.
  Denote the canonical morphisms $M^{q}\to M^{r}$ by $\iota^{q,r}$.
  Then there exists an $r_{0}\geq\max\{q,1\}$ such that
  for every $m\in M^{q}$ and $r\geq r_{0}$,
  $T\left(\iota^{q,r}(m)\right)\in M^{r}$ converges.
\end{lem}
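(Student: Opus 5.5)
We must show that, after shrinking the radius from $p^{-q}$ to $p^{-r}$, the Taylor series of any $m$ converges in $M^{r}$. The approach is to bound $\nabla^{q,\alpha}(m)$ in norm and compare this bound with the size of $Z^{\alpha}/\alpha!$ in $C^{r}$.

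First fix a basis $e_{1},\dots,e_{n}$ of the finite free $C^{q}$-module $M^{q}$, and give $M^{q}\cong (C^{q})^{n}$ the max norm. Write $\nabla_{j}^{q}(e_{k})=\sum_{l}G_{j,lk}e_{l}$ with matrices $G_{j}\in M_{n}(C^{q})$. By the Leibniz rule, for $m=\sum f_{k}e_{k}$ we have
\[
\nabla_{j}^{q}(m)=\textstyle\sum_{k}\bigl(\tfrac{d f_{k}}{dZ_{j}}+\sum_{l}G_{j,kl}f_{l}\bigr)e_{k}.
\]
The key estimate in $C^{q}=B^{q}\langle Z/p^{q}\rangle$ is $\|\tfrac{d}{dZ_{j}}f\|_{q}\le p^{q}\|f\|_{q}$. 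This holds because $Z^{\beta}$ has norm $p^{-q|\beta|}$, while the integer coefficient $\beta_{j}$ has norm at most $1$. Set $c:=\max(p^{q},\max_{j}\|G_{j}\|)$. By submultiplicativity, $\|\nabla_{j}^{q}(m)\|\le c\|m\|$, and iterating gives
\[
\|\nabla^{q,\alpha}(m)\|\le c^{|\alpha|}\|m\|.
\]

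Next pass to $C^{r}$. The map $\iota^{q,r}$ has norm at most $1$, and the operators $\nabla_{j}^{r}$ restrict to $\nabla_{j}^{q}$ on the image. So $T(\iota^{q,r}(m))=\sum_{\alpha}\frac{(-Z)^{\alpha}}{\alpha!}\,\iota^{q,r}(\nabla^{q,\alpha}(m))$, and its terms have $C^{r}$-norm at most
\[
p^{-r|\alpha|}\,|\alpha!|^{-1}\,c^{|\alpha|}\|m\|.
\]
Since $|\alpha!|^{-1}\le p^{|\alpha|/(p-1)}$, this is bounded by
\[
\bigl(p^{-r+1/(p-1)}c\bigr)^{|\alpha|}\|m\|.
\]
Choose $r_{0}\ge\max\{q,1\}$ with $p^{-r_{0}+1/(p-1)}c<1$; this is possible because $c$ depends only on $q$ and the $G_{j}$. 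Then for every $r\ge r_{0}$ the terms tend to $0$ as $|\alpha|\to\infty$, and the series converges in the Banach module $M^{r}$.

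The main obstacle is making the norm bookkeeping honest. Concretely:
\begin{itemize}
\item $M^{r}=M^{q}\widehat{\otimes}_{C^{q}}C^{r}$ must really be free on the $e_{k}$ with the max norm, and $\nabla^{r}$ must agree with $\nabla^{q}$ on $\iota^{q,r}(M^{q})$; both should follow from the explicit description in Lemma~\ref{lem:M-vary-to-Mr}.
\item The constant $c$ must not depend on $m$.
\end{itemize}
If the derivative estimate is off by a bounded factor, only $r_{0}$ needs enlarging, so the argument is robust.
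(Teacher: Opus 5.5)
Your proof is correct, and it differs from the paper's in where the connection is iterated.

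The paper proves an operator bound $\|\nabla^{r}\|\le\max\{1,\|\nabla^{q}\|\}$ on all of $M^{r}$, uniform in $r$. It then picks $r_{0}$ with $|p|^{r_{0}}<\varpi\|\nabla^{q}\|^{-1}$ and iterates $\nabla^{r}$ on $M^{r}$.

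You instead iterate $\nabla^{q}$ on $M^{q}$ with the explicit constant $c=\max(p^{q},\max_{j}\|G_{j}\|)$. This constant honestly records that $d/dZ_{j}$ has operator norm $p^{q}$ on $C^{q}$. You then transport the estimate via $\nabla^{r}\circ\iota^{q,r}=\iota^{q,r}\circ\nabla^{q}$, so the shrinking of the radius enters only through $\|Z^{\alpha}\|_{C^{r}}=p^{-r|\alpha|}$. This is the classical transfer argument.

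Your route is the more robust of the two. With the Gauss norm on $C^{r}$, the coefficient operators $\nabla^{r}_{j}$ from which $T$ is built have operator norm at least $p^{r}$ on $M^{r}$. To see this, test on $Z_{j}\,\iota^{q,r}(m_{1})$: its image has a coefficient with constant term $1$. Consequently an $r$-uniform bound like the paper's can only hold if $dZ_{j}$ is normalised to have norm $p^{-r}$. Converting it back into a bound on $\nabla^{r,\alpha}$ then reintroduces a factor $p^{r|\alpha|}$, which cancels the gain from the smaller radius. Your argument needs an operator bound only on the image of $M^{q}$, and that is all $T(\iota^{q,r}(m))$ involves.

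One minor correction: the transition maps $B^{q}\to B^{r}$ are only assumed bounded, not contractive. So $\iota^{q,r}$ has norm at most some constant $K_{q,r}$ rather than $1$. Since $K_{q,r}$ is independent of $\alpha$, the terms still tend to $0$ and the conclusion is unaffected.
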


\begin{proof}
  All the rings $C^{r}$ for $r\geq\max\{q,1\}$ carry norms.
  Therefore, every finite free $C^{r}$-module $M=\bigoplus C^{r}m_{i}$
  also carries the norm $\|\sum c_{i}m_{i} \|_{M}:=\max\|c_{i}\|_{C^{r}}$.
  We use this fact freely.

  Set $\varpi:=|p|^{\frac{1}{p-1}}\in\RR$
  and let $r_{0}\geq 1$ such that
  $|p|^{r_{0}}<\varpi\|\nabla^{q}\|^{-1}$.
  Fix an arbitrary $r\geq r_{0}$. We claim
  \begin{equation}\label{eq:genericradius-of-convergence-r}
    |p|^{r}<\varpi\|\nabla^{r}\|^{-1}.
  \end{equation}  
  To prove this, fix a $C^{q}$-basis $\left\{m_{1},\dots,m_{n}\right\}$ of $M^{q}$.
  Then $\left\{\iota^{q,r}\left(m_{1}\right),\dots,\iota^{q,r}\left(m_{n}\right)\right\}$
  is a $C^{r}$-basis of $M^{r}$. $M^{q}$ and $M^{r}$ carry the norms
  \begin{equation*}
    \|\sum_{j=1}^{n}g_{j}m_{j}\|_{M^{q}} = \max_{j=1,\dots,n}\|g_{j}\|_{C^{q}} \text{ and }
    \|\sum_{j=1}^{n}g_{j}\iota^{q,r}\left(m_{j}\right)\|_{M^{r}} = \max_{j=1,\dots,n}\|g_{j}\|_{C^{r}}.
  \end{equation*}
  Compute
  \begin{equation}\label{eq:bound-nablar-by-nabla-q}
  \begin{split}
    &\|\nabla^{r}\left(g_{j}\iota^{q,r}\left(m_{j}\right)\right)\|_{
       \Omega_{C^{q}/B^{q}}\widehat{\otimes}_{C^{q}}M^{r}} \\
    &=\|\delta^{r}\left(m_{j}\widehat{\otimes}g_{j}\right)\|_{
       \Omega_{C^{q}/B^{q}}\widehat{\otimes}_{C^{q}}
       M^{q}\widehat{\otimes}_{C^{q}}C^{r}} \\
    &=\|\left(\nabla^{q}\left(m_{j}\right)\widehat{\otimes}g_{j}\right)
      +\left( m_{j}\widehat{\otimes}d^{r}\left(g_{j}\right)\right)\|_{
       \Omega_{C^{q}/B^{q}}\widehat{\otimes}_{C^{q}}
       M^{q}\widehat{\otimes}_{C^{q}}C^{r}} \\
     &\leq\max\left\{
       \|\nabla^{q}\|\|m_{j}\|_{M^{q}}\|g_{j}\|_{C^{r}},
       \|m_{j}\|_{M^{q}}\|g_{j}\|_{C^{r}}
     \right\} \\
     &=\max\left\{
       \|\nabla^{q}\|\|g_{j}\|_{C^{r}},
       \|g_{j}\|_{C^{r}}
     \right\} \\
     &\leq\max\{\|\nabla^{q}\|,1\}\|g_{j}\|_{C^{r}} \\
     &=\max\{\|\nabla^{q}\|,1\}\|g_{j}\iota^{q,r}\left(m_{j}\right)\|_{M^{r}}
  \end{split}
  \end{equation}
  Hence,
  \begin{align*}
    \|\nabla^{r}\left(\sum_{j=1}^{n}g_{j}\iota^{q,r}\left(m_{j}\right)\right)\|_{
       \Omega_{C^{q}/B^{q}}\widehat{\otimes}_{C^{q}}M^{r}}
    &\leq\max_{j=1,\dots,n}\|\nabla^{r}\left(g_{j}\iota^{q,r}\left(m_{j}\right)\right)\|_{
       \Omega_{C^{q}/B^{q}}\widehat{\otimes}_{C^{q}}M^{r}} \\
    &\stackrel{\text{(\ref{eq:bound-nablar-by-nabla-q})}}{=}
    \max_{j=1,\dots,n}\max\{1 , \|\nabla^{q}\|\}\|g_{j}\iota^{q,r}\left(m_{j}\right)\|_{M^{r}} \\
    &=\max\{1 , \|\nabla^{q}\|\}\|\sum_{j=1}^{n}g_{j}\iota^{q,r}\left(m_{j}\right)\|_{M^{r}}.
  \end{align*}
  That is $\|\nabla^{r}\|\leq\max\{1,\|\nabla^{q}\|\}$.
  Now~(\ref{eq:genericradius-of-convergence-r}) follows from
  \begin{equation*}
    |p|^{r}\|\nabla^{r}\|
    \leq
    |p|^{r}\max\{1,\|\nabla^{q}\|\}
    \leq\max\{|p|^{r_{0}},|p|^{r_{0}}\|\nabla^{q}\|\}
    <
    \varpi.
  \end{equation*}
  In the last step, we have also used $r_{0}\geq1$.

  Finally, $T\left(\iota^{q,r}(m)\right)$ converges by the following computation:
   \begin{align*}
   \|\frac{(-Z)^{\alpha}}{\alpha!}\nabla^{r,\alpha}\left(\iota^{q,r}(m)\right)\|_{M^{r}}
   &\leq \|\frac{(-Z)^{\alpha}}{\alpha!}\|_{C^{r}} \|\nabla^{r,\alpha}\left(\iota^{q,r}(m)\right)\|_{M^{r}} \\
   &= \|\frac{p^{r|\alpha|}}{\alpha!}\frac{(-Z)^{\alpha}}{p^{r|\alpha|}}\|_{C^{r}} \|\nabla^{r,\alpha}\left(\iota^{q,r}(m)\right)\|_{M^{r}} \\
   &\leq |\frac{p^{r|\alpha|}}{\alpha!}|_{\QQ_{p}}\|\frac{(-Z)^{\alpha}}{p^{|\alpha|}}\|_{C^{r}} \|\nabla^{r}\|^{|\alpha|}\|\iota^{q,r}(m)\|_{M^{r}} \\
   &\leq \left(\frac{p^{-r}}{\varpi} \|\nabla^{r}\|\right)^{|\alpha|}\|\iota^{q,r}(m)\|_{M^{r}} \\
   &\stackrel{\text{(\ref{eq:genericradius-of-convergence-r})}}{\longrightarrow}0 \text{ for $|\alpha|\to\infty$.}    
  \end{align*}
  Here, $|-|_{\QQ_{p}}$ denotes the $p$-adic norm on $\QQ_{p}$.
  We also used $\varpi^{|\alpha|}\leq|\alpha!|$, cf.~\cite[Lemma 4.8]{KBB18}.
\end{proof}

\begin{prop}\label{prop:Mq-horizontals-sections-after-Mr}
  Fix a $C^{q}$-basis $\left\{ m_{1},\dots,m_{n}\right\}$ for $M^{q}$
  Fix the notation from Lemma~\ref{lem:T-converges-for-small-radius}.
  Fix $r_{0}$ such that for all $r\geq r_{0}$,
  the $T\left(\iota^{q,r}\left(m_{1}\right)\right),\dots,T\left(\iota^{q,r}\left(m_{n}\right)\right)\in M^{r}$
  converge. Then
  \begin{equation}\label{eq:Mq-horizontals-sections-after-Mr-the-map}
    \bigoplus_{j=1}^{n}B^{r}\cdot T\left(\iota^{q,r}\left(m_{j}\right)\right)
    \isomap M^{r,\nabla=0},
  \end{equation}
  as $B^{r}$-modules.
\end{prop}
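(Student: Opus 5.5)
We will show that the map \eqref{eq:Mq-horizontals-sections-after-Mr-the-map} is well defined, injective and surjective. Set $e_{j}:=\iota^{q,r}(m_{j})$; these form a $C^{r}$-basis of $M^{r}$ by Lemma~\ref{lem:M-vary-to-Mr}, and $T(e_{j})$ converges by hypothesis. Lemma~\ref{lem:T-of-m-is-horizontal-section} gives $T(e_{j})\in M^{r,\nabla=0}$. The connection is $B^{r}$-linear, so every $B^{r}$-linear combination of the $T(e_{j})$ is again horizontal, and the map is well defined.

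Before the two main steps we record how $T$ acts on general elements. For $g\in C^{r}$ the series $T(g)=\sum_{\alpha}\frac{(-Z)^{\alpha}}{\alpha!}\partial^{\alpha}g$ is the Taylor expansion of $g$ at $Z=0$. Hence $T(g)=g(0)$, the constant coefficient, which lies in $B^{r}$ (cf.\ Lemma~\ref{lem:T-of-m-is-horizontal-section}). This series always converges in $C^{r}$: writing $g=\sum_{\beta}b_{\beta}Z^{\beta}$, the combinatorics collapse as in the one-variable identity $\sum_{k}\binom{n}{k}(-1)^{k}=0$. By Lemma~\ref{lem:T-additive-multiplicative}, for $m=\sum_{j} g_{j}e_{j}$ we obtain
\[
T(m)=\sum_{j} g_{j}(0)\,T(e_{j}),
\]
which converges. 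For horizontal $m$ all terms with $\alpha\neq0$ vanish, so $T(m)=m$. It follows that every horizontal $m$ equals $\sum_{j} g_{j}(0)T(e_{j})$, which gives surjectivity.

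\textbf{Injectivity.} I would consider the reduction modulo $Z$, i.e.\ the evaluation $M^{r}\to M^{r}/(Z_{1},\dots,Z_{d})M^{r}\cong\bigoplus_{j} B^{r}\bar e_{j}$. Since $T(e_{j})=e_{j}+\sum_{\alpha\neq0}(\cdots)$ and every term with $\alpha\neq 0$ lies in the closed ideal generated by $Z$, we have $T(e_{j})\equiv e_{j}$ modulo $Z$. So if $\sum_{j} b_{j}T(e_{j})=0$ with $b_{j}\in B^{r}$, reducing gives $\sum_{j} b_{j}\bar e_{j}=0$, and therefore all $b_{j}=0$.

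The main obstacle is justifying these manipulations analytically. First, Lemma~\ref{lem:T-additive-multiplicative} must apply to products $g_{j}e_{j}$; this needs convergence of $T(g_{j})$ in $C^{r}$ together with a rearrangement of the double series. I would use the norm $\|\cdot\|_{C^{r}}$ (the Gauss norm for radius $|p|^{r}$) and the bound $|\binom{\alpha}{\beta}|\le1$ to get absolute convergence. Second, the identity $T(g)=g(0)$ must be checked via the coefficient computation above. Third, the ideal $(Z)M^{r}$ must be closed so that the reduction modulo $Z$ is continuous; this holds because $C^{r}\to B^{r}$, $Z\mapsto0$, is a bounded surjection with kernel $(Z)$.
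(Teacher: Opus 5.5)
Your proposal is correct and follows the paper's route. The paper only observes that the $T(\iota^{q,r}(m_{j}))$ are horizontal by Lemma~\ref{lem:T-of-m-is-horizontal-section} and calls injectivity and surjectivity straightforward, and your argument supplies exactly those omitted details: $T(\sum_{j}g_{j}e_{j})=\sum_{j}g_{j}(0)T(e_{j})$ converges, $T(m)=m$ for horizontal $m$, and reduction modulo $(Z_{1},\dots,Z_{d})$ sends $T(e_{j})$ to $\bar e_{j}$.
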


\begin{proof}
  The $b_{j}:=T\left(\iota^{q,r}\left(m_{j}\right)\right)$ are horizontal sections,
  cf. Lemma~\ref{lem:T-of-m-is-horizontal-section}.
  Thus~(\ref{eq:Mq-horizontals-sections-after-Mr-the-map}) exists.
  Checking injcitivity and surjectivity is straightforward.
\end{proof}


\subsubsection{Formal solutions}
\label{subsec:formal-solutions-of-diff-eq}

Let $B$ be a commutative abstract
$k_{0}$-algebra, equipped with a morphism
$\varinjlim_{q\in\NN}B^{q}\to B$.
Set
$C:=B\left\llbracket Z \right\rrbracket$
and equip $C$ with the $B$-linear differential
\begin{equation*}
  d\colon C\to\bigoplus_{j=1}^{d}C_{j}dZ_{j}=:\Omega_{C/B},\quad
  f \mapsto \sum_{j=1}^{d}\frac{d}{dZ_{j}}\left(f\right)dZ_{j}.
\end{equation*}

\begin{defn}\label{defn:differential-Cfor-module}
  A \emph{differential $C$-module} $M$ is a
  $C$-module equipped with a $B$-linear connection
  $\nabla\colon M \to \Omega_{C/B} \widehat{\otimes}_{C} M$.
\end{defn}

Essentially all definitions from \S\ref{subsubsec:convergingsolutions}
go through verbatim for differential $C$-modules.

\begin{lem}\label{lem:M-vary-to-MdR}
  Let $M^{q}$ be a differential $C^{q}$-module with connection $\nabla^{q}$.
  For every $r\geq q$,
  \begin{equation*}
    M^{q}\otimes_{C^{q}}C
    \rightarrow
    \Omega_{C^{q}/B^{q}}\otimes_{C^{q}}
    M^{q}\otimes_{C^{q}}C,\quad
    m\otimes g
    \mapsto
    \left(\nabla^{q}(m)\widehat{\otimes}g\right)
    +\left( m\widehat{\otimes}d(g)\right),
  \end{equation*}  
  is a $B$-linear map which we denote by $\delta$.
  Then the following composition, denoted by $\nabla$,
  \begin{equation*}
    M
    :=M^{q}\otimes_{C^{q}}C
    \stackrel{\delta}{\rightarrow}
    \Omega_{C^{q}/B^{q}}\otimes_{C^{q}}
    M^{q}\otimes_{C^{q}}C
    \cong
    \Omega_{C/B}\otimes_{C}
    M
  \end{equation*}
  is a connection. We conclude that
  $\left(M,\nabla\right)$ is a differential $C$-module.
\end{lem}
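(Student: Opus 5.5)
The plan is to copy the proof of Lemma~\ref{lem:M-vary-to-Mr}, replacing the completed tensor products by ordinary ones. Two things need checking: that $\delta$ is a well-defined $B$-linear map, and that the induced $\nabla$ satisfies the Leibniz rule over $C$.

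\emph{Well-definedness of $\delta$.} First I will say how $C$ is a $C^{q}$-algebra. The composite $B^{q}\to\varinjlim_{q}B^{q}\to B$ extends to a ring map $C^{q}=B^{q}\langle Z/p^{q}\rangle\to B\llbracket Z\rrbracket=C$, sending a convergent power series to the same formal series with coefficients mapped into $B$.

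This map intertwines the differentials. Indeed $d^{q}$ is termwise formal differentiation, by~\eqref{eq:differential-on-OB} and the description of $\Omega_{C^{q}/B^{q}}$ with basis $dZ_{1},\dots,dZ_{d}$. Hence $d^{q}(g')$ maps to $d(g')$ for $g'\in C^{q}$.

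Next I define $\delta$ on $M^{q}\times C$ by the stated formula. It is biadditive and $B^{q}$-balanced. To see that it descends to $M^{q}\otimes_{C^{q}}C$, compare $\delta(g'm\otimes g)$ with $\delta(m\otimes g'g)$ for $g'\in C^{q}$:
\begin{itemize}
\item by the Leibniz rule for $\nabla^{q}$, the first equals $\nabla^{q}(m)\otimes g'g+d^{q}(g')m\otimes g+m\otimes g'd(g)$;
\item by the Leibniz rule for $d$, the second equals $\nabla^{q}(m)\otimes g'g+m\otimes d(g')g+m\otimes g'd(g)$.
\end{itemize}
These agree because $d^{q}(g')$ and $d(g')$ correspond under the identification above. $B$-linearity is clear, since $d$ is $B$-linear and $B$ acts only on the $C$-factor.

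\emph{The connection $\nabla$.} I will identify
\[
\Omega_{C^{q}/B^{q}}\otimes_{C^{q}}M^{q}\otimes_{C^{q}}C\cong\Omega_{C/B}\otimes_{C}M
\]
using $\Omega_{C^{q}/B^{q}}\otimes_{C^{q}}C\cong\bigoplus_{j}C\,dZ_{j}=\Omega_{C/B}$, which holds because both are free on the $dZ_{j}$. Under this identification, the Leibniz rule $\nabla(hx)=dh\otimes x+h\nabla(x)$ for $h\in C$ and $x=m\otimes g$ follows directly from the Leibniz rule for $d$ on $C$.

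I will also note that $M$ is finite free over $C$ on the images of a basis of $M^{q}$, so the completed tensor product in Definition~\ref{defn:differential-Cfor-module} agrees with the ordinary one. The main, though mild, obstacle is the compatibility of $C^{q}\to C$ with the differentials; this reduces to checking that termwise differentiation commutes with the coefficient map, which is immediate.
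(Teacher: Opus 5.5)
Your proposal is correct and is exactly the direct verification the paper intends: the paper gives no separate proof of this lemma, and for the analogous Lemma~\ref{lem:M-vary-to-Mr} it only says that one checks that $\delta^{r}$ is well defined and that $\nabla^{r}$ satisfies the Leibniz rule. Your explicit $C^{q}$-balancedness computation, which uses that $C^{q}\to C$ intertwines $d^{q}$ and $d$, supplies precisely the detail the paper leaves implicit.
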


\begin{prop}\label{prop:compatibility-with-Scholzesfunctor-plus-sections-formal}
  Fix the notation from
  the Lemma~\ref{lem:T-converges-for-small-radius} and~\ref{lem:M-vary-to-MdR}.
  Then the canonical morphism
  \begin{equation*}
    M^{r,\nabla=0}\otimes_{B^{r}}B\stackrel{\cong}{\longrightarrow}M^{\nabla=0}
  \end{equation*}
  is an isomorphism of $B$-modules.
\end{prop}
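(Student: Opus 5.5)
Since $r\geq r_{0}$, Proposition~\ref{prop:Mq-horizontals-sections-after-Mr} already gives an explicit basis of $M^{r,\nabla=0}$. So the natural plan is to transport that basis to $M$ and show it is again a basis there. Let $\{m_{1},\dots,m_{n}\}$ be a $C^{q}$-basis of $M^{q}$ and put $b_{j}:=T(\iota^{q,r}(m_{j}))\in M^{r}$. By Proposition~\ref{prop:Mq-horizontals-sections-after-Mr}, the $b_{j}$ form a $B^{r}$-basis of $M^{r,\nabla=0}$. Hence $M^{r,\nabla=0}\otimes_{B^{r}}B$ is free on the $b_{j}\otimes1$, and the claim reduces to showing that the images $\beta_{j}$ of the $b_{j}$ in $M$ form a $B$-basis of $M^{\nabla=0}$.

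\textbf{Step 1 (comparison maps).} I would first construct the map $M^{r}\to M$. There is a ring map $C^{r}=B^{r}\langle Z/p^{r}\rangle\to B\llbracket Z\rrbracket=C$, sending coefficients along $B^{r}\to B$; no convergence is needed on the target. It is compatible with the differentials $d^{r}$ and $d$. Consequently $M^{r}=M^{q}\widehat{\otimes}_{C^{q}}C^{r}\to M^{q}\otimes_{C^{q}}C=M$ is well defined; on the basis it sends $\iota^{q,r}(m_{j})\mapsto m_{j}\otimes1$. By the definitions in Lemmas~\ref{lem:M-vary-to-Mr} and~\ref{lem:M-vary-to-MdR}, this map intertwines $\nabla^{r}$ with $\nabla$.

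\textbf{Step 2 (horizontality).} Next I would observe that $\beta_{j}$ equals the formal Taylor series $T(m_{j}\otimes1)$ computed in $M$. In $M$, $T$ always makes sense coefficientwise, since the $Z^{\alpha}$ separate degrees in $C=B\llbracket Z\rrbracket$. The computation in the proof of Lemma~\ref{lem:T-of-m-is-horizontal-section} is purely formal, so it applies verbatim and gives $\beta_{j}\in M^{\nabla=0}$. In particular the comparison map lands in $M^{\nabla=0}$.

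\textbf{Step 3 (bijectivity).} This is the main point. I would argue with the formal structure of $C$, using reduction modulo $(Z)$. Write $\beta_{j}=m_{j}\otimes1+(\text{terms in }(Z))$. The matrix $A\in M_{n}(C)$ expressing the $\beta_{j}$ in the basis $m_{j}\otimes1$ satisfies $A\equiv I\pmod{(Z)}$, so $A$ is invertible in $C$. Hence the $\beta_{j}$ form a $C$-basis of $M$, which gives $C$-linear independence, and in particular $B$-linear independence; this is injectivity.

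For surjectivity, take $x\in M^{\nabla=0}$ and write $x=\sum g_{j}\beta_{j}$ with $g_{j}\in C$. Horizontality of the $\beta_{j}$ and the Leibniz rule give $0=\nabla x=\sum d(g_{j})\otimes\beta_{j}$. Since the $\beta_{j}$ form a basis, every partial derivative $\frac{d}{dZ_{i}}g_{j}$ vanishes. Because $B$ is a $\QQ$-algebra (it is a $k_{0}$-algebra), this forces $g_{j}\in B$, the formal analogue of Lemma~\ref{lem:horizontalsections-via-ddZj}. Thus $x$ lies in the $B$-span of the $\beta_{j}$.

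I expect the only genuinely delicate points to be bookkeeping rather than substance. The first is that the completed tensor product $\widehat{\otimes}$ in $M^{r}$ matches $C^{r}\otimes$ on a finite free module, so that the basis transports. The second is identifying $\Omega_{C/B}\otimes_{C}M$ with $\bigoplus_{i}M\,dZ_{i}$, so that vanishing of $\nabla$ can be read off componentwise.
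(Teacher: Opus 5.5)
Your proof is correct and takes essentially the paper's approach. The paper only says the statement follows from Proposition~\ref{prop:Mq-horizontals-sections-after-Mr}, and you supply the omitted check that the images of the basis $T(\iota^{q,r}(m_{j}))$ form a $B$-basis of $M^{\nabla=0}$: the matrix $A\equiv I$ modulo $(Z)$ is invertible in $B\llbracket Z\rrbracket$, and a formal series with all partial derivatives zero is constant over a $\QQ$-algebra. The one implicit step, that $M^{r}\to M$ sends the convergent Taylor series to the formal one, holds because only the finitely many $\alpha\leq\beta$ contribute to each $Z^{\beta}$-coefficient.
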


\begin{proof}
  This is a consequence of Proposition~\ref{prop:Mq-horizontals-sections-after-Mr}.
\end{proof}

\begin{cor}\label{cor:dager-formal-cauchy-formalism}
  Write $B^{\dag}:=\varinjlim_{r\to\infty}B^{r}$, $M^{\dag}=\varinjlim_{r\to\infty}M^{r}$. Then the canonical morphism
  \begin{equation*}
    M^{\dag,\nabla=0}\otimes_{B^{\dag}}B\stackrel{\cong}{\longrightarrow}M^{\nabla=0}
  \end{equation*}
  is an isomorphism of $B$-modules.
\end{cor}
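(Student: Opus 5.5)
The plan is to deduce the corollary from Proposition~\ref{prop:compatibility-with-Scholzesfunctor-plus-sections-formal} by passing to the colimit over $r$, using that filtered colimits commute with kernels and tensor products.

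\textbf{Step 1: horizontal sections commute with the colimit.} Write $M^{\dag}=\varinjlim_{r}M^{r}$. It carries the colimit connection $\nabla^{\dag}=\varinjlim_{r}\nabla^{r}$, which takes values in $\varinjlim_{r}\Omega_{C^{q}/B^{q}}\widehat{\otimes}_{C^{q}}M^{r}$. This is compatible because the transition maps $M^{r}\to M^{r'}$ intertwine $\nabla^{r}$ and $\nabla^{r'}$: both are induced from $\nabla^{q}$ by the Leibniz formula of Lemma~\ref{lem:M-vary-to-Mr}, and $d^{r'}$ restricts to $d^{r}$ on $C^{r}$. Since filtered colimits of abstract modules are exact, we get
\begin{equation*}
M^{\dag,\nabla=0}=\varinjlim_{r}M^{r,\nabla=0}.
\end{equation*}

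\textbf{Step 2: compatibility of the comparison maps.} For $r'\geq r\geq r_{0}$, the morphism $M^{r,\nabla=0}\otimes_{B^{r}}B\to M^{\nabla=0}$ of Proposition~\ref{prop:compatibility-with-Scholzesfunctor-plus-sections-formal} factors through $M^{r',\nabla=0}\otimes_{B^{r'}}B$. Here $M=M^{q}\otimes_{C^{q}}C$, and the maps $C^{r}\to C$ are compatible, with $Z$ sent to $Z$. All three maps are isomorphisms, so the transition maps
\begin{equation*}
M^{r,\nabla=0}\otimes_{B^{r}}B\to M^{r',\nabla=0}\otimes_{B^{r'}}B
\end{equation*}
are isomorphisms. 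One can also see this directly from Proposition~\ref{prop:Mq-horizontals-sections-after-Mr}: the basis $T(\iota^{q,r}(m_{j}))$ maps to $T(\iota^{q,r'}(m_{j}))$, because $\nabla^{r'}\circ\iota^{r,r'}=\iota^{r,r'}\circ\nabla^{r}$ and the Taylor series is formed termwise.

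\textbf{Step 3: pass to the limit.} Tensor products commute with colimits, so
\begin{equation*}
M^{\dag,\nabla=0}\otimes_{B^{\dag}}B\cong\varinjlim_{r}\bigl(M^{r,\nabla=0}\otimes_{B^{r}}B\bigr).
\end{equation*}
This uses $B^{\dag}=\varinjlim B^{r}$ together with the standard identity $\varinjlim_{r}(N_{r}\otimes_{A_{r}}B)\cong(\varinjlim N_{r})\otimes_{\varinjlim A_{r}}B$. The right-hand side is a colimit of a system of isomorphisms for $r\geq r_{0}$, each mapping isomorphically onto $M^{\nabla=0}$, so the canonical map is an isomorphism. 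Concretely, both sides are free on the images of $T(\iota^{q,r}(m_{j}))$.

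\textbf{Main obstacle.} The only delicate point is Step 1, namely making sense of $\nabla^{\dag}$ on the colimit and checking that the relevant targets $\Omega\widehat{\otimes}M^{r}$ form a compatible directed system. Completed tensor products do not commute with colimits in general. Here, however, everything is finite free over $C^{r}$ with the fixed basis $\iota^{q,r}(m_{j})$, so $\Omega_{C^{q}/B^{q}}\widehat{\otimes}_{C^{q}}M^{r}\cong (C^{r})^{nd}$ compatibly in $r$. I would verify the compatibility in these coordinates, where $\nabla^{r}$ is given by $d^{r}$ plus the fixed connection matrix of $\nabla^{q}$.
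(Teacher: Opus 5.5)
Your proof is correct and follows essentially the same approach as the paper. The paper states this corollary without proof, as an immediate consequence of Proposition~\ref{prop:compatibility-with-Scholzesfunctor-plus-sections-formal} obtained by passing to the filtered colimit over $r$. Your three steps (kernels commuting with filtered colimits, compatibility of the comparison maps through the Taylor bases $T(\iota^{q,r}(m_j))$, and tensor products commuting with colimits) spell out exactly that argument in full.
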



\subsubsection{Proof of Theorem~\ref{thm:Cauchy}}\label{subsec:horizontalsectionsofsheavesindbanach}

We may assume that $X$ is affinoid and equipped with an étale morphism
$X\to\TT^{d}$. Set $\mathcal{M}^{\dag}:=\nu^{-1}\mathcal{E}\widehat{\otimes}_{\nu^{-1}\cal{O}}\OB_{\dR}^{\dag,+}$
and $\mathcal{M}:=\nu^{-1}\mathcal{E}\widehat{\otimes}_{\nu^{-1}\cal{O}}\OB_{\dR}^{+}$.
Fix an affinoid perfectoid $U\in X_{\proet}/\widetilde{X}$. Write
$B^{\dag}:=\BB_{\dR}^{\dag,+}(U)$,
$B:=\BB_{\dR}^{+}(U)$,
$M^{\dag}:=\mathcal{M}^{\dag}(U)$, and
$M:=\mathcal{M}(U)$.
Now apply Corollary~\ref{cor:dager-formal-cauchy-formalism}.
This completes the proof of Theorem~\ref{thm:Cauchy}. \hfill\qedsymbol


\section{Applications to $\Dcap$-modules}
\label{subsec:solutionderHamfunctorsDcapmod-Cauchy}

\subsection{The sheaf $\Dcap$}

Let $X$ be a smooth rigid-analytic $k$-variety. We write $\Dcap$ for the
sheaf of infinite-order differential operators on $X$ constructed by
Ardakov and Wadsley~\cite{AW19}. We use the notation and conventions of
\cite{WiersigPeriods}, where $\Dcap$ is regarded as a sheaf of
ind-Banach algebras.

The structure sheaf $\cal O$ is naturally a sheaf of left
$\Dcap$-modules. Hence
\begin{equation*}
\nu^{-1}\cal O
\widehat{\otimes}_{k_{0}}
\BB_{\dR}^{\dag,+}
\end{equation*}
is a $\nu^{-1}\Dcap$-$\BB_{\dR}^{\dag,+}$-bimodule object. In
\cite{WiersigPeriods} we proved that this bimodule structure extends
uniquely to a $\nu^{-1}\Dcap$-$\BB_{\dR}^{\dag,+}$-bimodule structure on
$\OB_{\dR}^{\dag,+}$.

We shall use the following refined form of this statement.
The period sheaves admit presentations
$\BB_{\dR}^{\dag,+}=\varinjlim_{q\geq0}\BB_{\dR}^{q,+}$ and
$\OB_{\dR}^{\dag,+}=\varinjlim_{q\geq0}\OB_{\dR}^{q,+}$
The sheaves $\OB_{\dR}^{q,+}$ have a simple local description.
Suppose that $X$ is affinoid and equipped with an étale morphism
$X\to \TT^{d}$. Let $\widetilde X:=X\times_{\TT^{d}}\widetilde{\TT}^{d}$
be the induced pro-étale cover. Writing $Z=\left(Z_{1},\dots,Z_{d}\right)$, there is the isomorphism
\begin{equation*}
\BB_{\dR}^{q,+}|_{\widetilde X}
\left<
Z/p^{q}
\right>
\isomap
\OB_{\dR}^{\dag,+}|_{\widetilde X},
\qquad
Z{i}\mapsto T_{i}-[T_{i}^{\flat}]
\end{equation*}
of $\BB_{\dR}^{q,+}|_{\widetilde X}$-ind-Banach algebras.
Using this local description, we are able to show that
$\OB_{\dR}^{q,+}$ carries a suitably unique
$\nu^{-1}\Dcap$-$\BB_{\dR}^{q,+}$-bimodule structure
for sufficiently large $q\gg0$. These structures are compatible with the transition maps in
$q$. Passing to the filtered colimit recovers the
$\nu^{-1}\Dcap$-$\BB_{\dR}^{\dag,+}$-bimodule structure on
$\OB_{\dR}^{\dag,+}$.


\subsection{Solution and de Rham functors}

We recall the solution and de Rham functors constructed
in~\cite[\S\ref*{subsec:solutionderHamfunctorsDcapmod-solutionpaper}]{WiersigReconstruction}.
We use the notation introduced in the preceding subsection.

Let $X_{\proet}\times\NN_{\gg0}^{\op}$
be the auxiliary site whose objects are pairs $(U,q)$, where
$U\in X_{\proet}$ and $q$ is sufficiently large. The lower bound on $q$
is chosen so that the bimodule structures on $\OB_{\dR}^{q,+}$ described
above are defined.

Let $X_{w}$ denote the category
whose objects are the affinoid subdomains of $X$ and whose morphisms
are the inclusions, carrying the weak Grothendieck topology.
There is a natural morphism of sites
$\lambda\colon
X_{\proet}\times\NN_{\gg0}^{\op}
\to
X_{w}$,
induced by the pro-étale projection to $X$. Concretely, for an affinoid
subdomain $U\subseteq X$, the inverse image of $U$ is represented by
$(\nu^{-1}(U),q)$ for $q$ sufficiently large.

On $X_{\proet}\times\NN_{\gg0}^{\op}$ we consider the sheaves
\begin{equation*}
\OB_{\dR}^{*,+}(U,q)
:=
\OB_{\dR}^{q,+}(U),
\qquad
\BB_{\dR}^{*,+}(U,q)
:=
\BB_{\dR}^{q,+}(U).
\end{equation*}
They are sheaves of $k_{0}$-ind-Banach algebras. There is a canonical
morphism
\begin{equation}\label{eq:OtensorbdRstarplus-to-OBdrstarplus-epi-cauchy}
\lambda^{-1}\mathcal O
\widehat{\otimes}_{k_{0}}
\BB_{\dR}^{*,+}
\to
\OB_{\dR}^{*,+}
\end{equation}
of sheaves of $k_{0}$-ind-Banach algebras. We equip
$\OB_{\dR}^{*,+}$ with the unique
$\lambda^{-1}\Dcap$-$\BB_{\dR}^{*,+}$-bimodule structure for which
\eqref{eq:OtensorbdRstarplus-to-OBdrstarplus-epi-cauchy} is a morphism
of $\lambda^{-1}\Dcap$-$\BB_{\dR}^{*,+}$-bimodule objects.

Let $\sigma\colon
X_{\proet}
\to
X_{\proet}\times\NN_{\gg0}^{\op}$
be the morphism of sites induced by passage to the filtered colimit over
$q$. Then there is a canonical morphism of sheaves of
$k_{0}$-ind-Banach algebras
$\sigma^{-1}\BB_{\dR}^{*,+}
\to
\BB_{\dR}^{\dag,+}$.
Thus $\sigma$ becomes a morphism of ringed sites
$\left(
X_{\proet},
\BB_{\dR}^{\dag,+}
\right)
\to
\left(
X_{\proet}\times\NN_{\gg0}^{\op},
\BB_{\dR}^{*,+}
\right)$.

For derived constructions we pass to the left heart
$\IndBan_{\I(k_{0})}
:=
\LH\left(\IndBan_{k_{0}}\right)$.
This is a Grothendieck abelian category~\cite[Lemma~4.20 and the remark
following Definition~3.2]{Bo21}; in particular, it has enough injectives.
Moreover, for any site $Y$, the category
$\Sh\left(Y,\IndBan_{\I(k_{0})}\right)$
carries a closed symmetric monoidal structure by~\cite[\S3.4]{Bo21}.
The functor
\begin{equation*}
\I\colon
\Sh\left(Y,\IndBan_{k_{0}}\right)
\to
\Sh\left(Y,\IndBan_{\I(k_{0})}\right)
\end{equation*}
is fully faithful and strongly monoidal, by~\cite[Corollary~1.2.28]{Sch99}
and~\cite[Lemma~3.33]{Bo21}. Since pullback commutes with $\I$,
cf.~\cite[Lemma~3.39(ii)]{Bo21}, the bimodule structure above induces a
$\lambda^{-1}\I\left(\Dcap\right)$-$\I\left(\BB_{\dR}^{*,+}\right)$-bimodule structure on
$\I\left(\OB_{\dR}^{*,+}\right)$.

Similarly, $\sigma$ gives rise to a morphism of ringed sites
\begin{equation*}
  \left(X_{\proet},\I\left(\BB_{\dR}^{\dag,+}\right)\right)
  \to \left(X_{\proet}\times\NN_{\gg0}^{\op},\I\left(\BB_{\dR}^{*,+}\right)\right).
\end{equation*}
Here, we have also used that pullback $\lambda^{-1}$ commutes with $\I$,
cf.~\cite[Lemma 3.39(ii)]{Bo21}. Now we can define the 
\emph{solution functor} via the formula
  \begin{equation*}
    \Sol:=\SolB_{\pdR}^{\dag}\colon
      \D\left(\I\left(\Dcap\right)\right)
      \to
      \D\left(\I\left(\BB_{\dR}^{\dag,+}\right)\right),
      \cal{M}^{\bullet}\mapsto
        \rL\sigma^{*}\R\shHom_{\lambda^{-1}\I\left(\Dcap\right)}
        \left(
          \lambda^{-1}\cal{M}^{\bullet},
          \I\left(\OB_{\dR}^{*,+}\right)
        \right).
  \end{equation*}
Let $d$ be the dimension of $X$.
The duality functor
$\DD\colon\D\left( \Dcap \right)
  \to\D\left( \Dcap \right)^{\op}$
has been introduced in~\cite[\S 7.2]{Bo21}.
Now we define the \emph{de Rham functor}:
\begin{equation*}
  \dRfunctor:=\dRfunctorB_{\dR}^{\dag,+}\colon
  \D\left( \I\left(\Dcap\right) \right) \to \D\left( \I\left(\BB_{\dR}^{\dag,+}\right)\right),
  \mathcal{M}^{\bullet} \mapsto \Sol\left(\DD\left(\mathcal{M}^{\bullet}\right)\right)\left[d\right].
\end{equation*}


\subsection{A Poincaré Lemma}
\label{subsec:connection-on-OBla-and-Pioncare-lem-cauchy}

We would like to describe $\dRfunctor\left(\cal{E}\right)$ for vector bundles with flat connection
$\cal{E}$. This can be done using the language of connections. As a first step,
we construct a $\BB_{\dR}^{\dag,+}$-linear connection on $\OB_{\dR}^{\dag,+}$,
coming from its $\nu^{-1}\Dcap$-$\BB_{\dR}^{\dag,+}$-bimodule structure
as in Theorem~\ref{thm:OBlaplus-bimodule}. This also allows us to formulate and prove
a Poincaré Lemma.

\begin{defn}\label{defn:tangentsheaf}
  By~\cite[\S 9.1, Proposition]{AW19},
  there is a coherent sheaf $\mathcal{T}$ on $X$ with $\mathcal{T}(U)=\Der_{k}\mathcal{O}(U)$
  for every affinoid subdomain $U\subseteq X$: the \emph{tangent sheaf}.
  Its sections are $k$-Banach spaces, so that we can it naturally as a sheaf of $k$-ind-Banach spaces.
\end{defn}

\begin{defn}\label{defn:nablaqlaplus}
  Let $X$ be affinoid. Fix an étale morphism $g\colon X\to\TT^{d}$
  and $q\ggg0$.
  We define the following map
  $\nabla_{\dR}^{q,+}\colon\OB_{\dR}^{q,+}\to\OB_{\dR}^{q,+}\widehat{\otimes}_{\nu^{-1}}\nu^{-1}\Omega^{1}$
  of sheaves of $\BB_{\dR}^{q,+}$-ind-Banach modules. The $\nu^{-1}\Dcap$-$\BB_{\dR}^{q,+}$-bimodule structure
  on $\OB_{\dR}^{q,+}$ from either Theorem~\ref{thm:OBlaplus-bimodule}
  gives a morphism
  \begin{equation*}
    \nu^{-1}\Dcap\to\shHom_{\BB_{\dR}^{q,+}}\left( \OB_{\dR}^{q,+},\OB_{\dR}^{q,+}\right)
  \end{equation*}
  of sheaves of $k_{0}$-ind-Banach algebras. The proof of \emph{loc. cit.}
  implies that this morphism is $\nu^{-1}\mathcal{O}$-linear. Thus it
  lifts to a morphism
  \begin{equation*}
    \nu^{-1}\Dcap\widehat{\otimes}_{\nu^{-1}\mathcal{O}}\OB_{\dR}^{q,+}
    \to\shHom_{\BB_{\dR}^{q,+}}\left( \OB_{\dR}^{q,+},\OB_{\dR}^{q,+}\right)
  \end{equation*}  
  of sheaves of $\OB_{\dR}^{q,+}$-ind-Banach algebras; here, the left-hand
  side becomes a monoid object through Lemma~\ref{lem:monoid-on-tensorproduct}.
  Compose it with the canonical map
  \begin{equation*}
    \nu^{-1}\mathcal{T}\widehat{\otimes}_{\nu^{-1}\mathcal{O}}\OB_{\dR}^{q,+}
    \to\nu^{-1}\Dcap\widehat{\otimes}_{\nu^{-1}\mathcal{O}}\OB_{\dR}^{q,+}.
  \end{equation*}
  to obtain
  \begin{equation*}
    \nu^{-1}\mathcal{T}\widehat{\otimes}_{\nu^{-1}\mathcal{O}}\OB_{\dR}^{q,+}
    \to\shHom_{\BB_{\dR}^{q,+}}\left( \OB_{\dR}^{q,+},\OB_{\dR}^{q,+}\right).
  \end{equation*}    
  Its dual appears in the following composition:
  \begin{equation*}
  \begin{split}
    \OB_{\dR}^{q,+}
    &\to\shHom_{\OB_{\dR}^{q,+}}\left(
    \shHom_{\BB_{\dR}^{q,+}}\left( \OB_{\dR}^{q,+},\OB_{\dR}^{q,+}\right),
    \OB_{\dR}^{q,+}\right) \\
    &\to\shHom_{\OB_{\dR}^{q,+}}\left( \nu^{-1}\mathcal{T}\widehat{\otimes}_{\nu^{-1}\mathcal{O}}\OB_{\dR}^{q,+},
    \OB_{\dR}^{q,+} \right) \\
    &\cong\shHom_{\nu^{-1}\mathcal{O}}\left( \nu^{-1}\mathcal{T},
    \OB_{\dR}^{q,+} \right) \\
    &\cong \OB_{\dR}^{q,+}\widehat{\otimes}_{\nu^{-1}\mathcal{O}}\nu^{-1}\Omega^{1}.
    \end{split}
  \end{equation*}
  This composition is $\nabla_{\dR}^{q,+}$.
\end{defn}

\begin{lem}
  Let $X$ be affinoid. Fix an étale morphism $g\colon X\to\TT^{d}$ and $q\gg0$.
  Then $\nabla_{\dR}^{q,+}$ fits into the commutative diagram
  \begin{equation*}
    \begin{tikzcd}
      \OB_{\dR}^{q,+} \arrow{r}{\nabla_{\dR}^{q,+}} &
      \OB_{\dR}^{q,+} \widehat{\otimes}_{\nu^{-1}\mathcal{O}}\nu^{-1}\Omega \\
      \nu^{-1}\mathcal{O} \arrow{u}\arrow{r}{\nu^{-1}\nabla} &
      \nu^{-1}\mathcal{O}\widehat{\otimes}_{\nu^{-1}\mathcal{O}}\nu^{-1}\Omega
      \arrow{u}
    \end{tikzcd}
  \end{equation*}
  of sheaves of $k$-ind-Banach spaces.
\end{lem}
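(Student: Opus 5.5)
The plan is to check commutativity after evaluating both composites on sections of $\nu^{-1}\mathcal{O}$ over a basis of $X_{\proet}$. Both composites are morphisms of sheaves of $k$-ind-Banach spaces, so it suffices to compare them as maps of abstract sheaves via the forgetful functor $|\cdot|$, which is faithful on morphisms. For this, it is enough to compare them on sections over affinoid perfectoid $U$ in $X_{\proet}/\widetilde X$. Since $\nu^{-1}\mathcal{O}$ is the sheafification of the presheaf pullback, it further suffices to check the identity on elements $f\in\mathcal{O}(U_i)$ coming from some level $U_i$ of a pro-étale presentation.

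The key input is the definition of $\nabla_{\dR}^{q,+}$ through the $\nu^{-1}\Dcap$-action. By construction, $\nabla_{\dR}^{q,+}$ is characterised by the identity
\begin{equation*}
\langle \partial,\nabla_{\dR}^{q,+}(s)\rangle=\partial\cdot s
\end{equation*}
for local sections $\partial$ of $\nu^{-1}\mathcal{T}$, where the pairing is $\mathcal{T}\otimes\Omega^{1}\to\mathcal{O}$. Similarly, $\nu^{-1}\nabla$ on $\nu^{-1}\mathcal{O}$ is characterised by $\langle\partial,\nabla f\rangle=\partial(f)$. The bimodule structure on $\OB_{\dR}^{q,+}$ is the unique one making \eqref{eq:OtensorbdRstarplus-to-OBdrstarplus-epi-cauchy} a morphism of $\nu^{-1}\Dcap$-$\BB_{\dR}^{q,+}$-bimodules, where $\nu^{-1}\Dcap$ acts on $\nu^{-1}\mathcal{O}\widehat\otimes_{k_0}\BB_{\dR}^{q,+}$ through the left factor. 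Hence for $f\in\nu^{-1}\mathcal{O}$, the image of $f$ in $\OB_{\dR}^{q,+}$ is the image of $f\otimes 1$, and we get $\partial\cdot f=\partial(f)\otimes 1$. Here the left action of $\partial\in\mathcal{T}\subset\Dcap$ on $\mathcal{O}$ is differentiation.

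Pairing both composites with an arbitrary $\partial$ therefore gives the same element. Since $\Omega^{1}$ is finite locally free, the isomorphisms $\shHom_{\nu^{-1}\mathcal{O}}(\nu^{-1}\mathcal{T},\OB_{\dR}^{q,+})\cong\OB_{\dR}^{q,+}\widehat\otimes_{\nu^{-1}\mathcal{O}}\nu^{-1}\Omega^{1}$ used in Definition~\ref{defn:nablaqlaplus} are compatible with the corresponding isomorphism for $\mathcal{O}$ via the map $\nu^{-1}\mathcal{O}\to\OB_{\dR}^{q,+}$. Consequently an element of the target is determined by its pairings with $\mathcal{T}$, and the two composites agree. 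Concretely, in coordinates from the étale map $g$, with the basis $dT_{l}$ and its dual basis $\partial_{l}$, both composites send $f$ to $\sum_{l}\partial_{l}(f)\,dT_{l}$.

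The main obstacle is this naturality check: making precise that the dualisation chain in Definition~\ref{defn:nablaqlaplus} is natural with respect to the ring map $\nu^{-1}\mathcal{O}\to\OB_{\dR}^{q,+}$ in the ind-Banach setting. My approach is to reduce to the local frame $dT_{1},\dots,dT_{d}$, where every $\shHom$ becomes a finite direct sum and all the isomorphisms are explicit. The uniqueness of the bimodule structure, together with the fact that \eqref{eq:OtensorbdRstarplus-to-OBdrstarplus-epi-cauchy} is left $\Dcap$-linear, then settles the identity $\partial\cdot f=\partial(f)$.
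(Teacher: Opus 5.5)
Your proposal is correct and follows the paper's argument. The paper's proof is the single observation that $\nu^{-1}\mathcal{O}\to\OB_{\dR}^{q,+}$ is $\nu^{-1}\Dcap$-linear (via the bimodule map \eqref{eq:OtensorbdRstarplus-to-OBdrstarplus-epi-cauchy}), which is exactly your identity $\partial\cdot f=\partial(f)$, combined with naturality of the dualisation in Definition~\ref{defn:nablaqlaplus}; your detour through $|\cdot|$ and the local frame $dT_{1},\dots,dT_{d}$ only makes that naturality explicit, and the same pairing argument can be stated directly for sheaves of ind-Banach spaces, so the faithfulness claim is not needed.
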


\begin{proof}
  This is because the map $\nu^{-1}\mathcal{O}\to\OB_{\dR}^{q,+}$ is $\nu^{-1}\Dcap$-linear.
\end{proof}

In Lemma~\ref{lem:localdescription-nablalaplus} below,
we consider the usual differentials
$d^{q}\colon k_{0}\left\<Z/p^{q}\right\>\to\Omega_{k_{0}\left\<Z/p^{q}\right\>/k_{0}}$.

\begin{lem}\label{lem:localdescription-nablalaplus}
  Let $X$ be affinoid. Fix an étale morphism $g\colon X\to\TT^{d}$ and $q\gg0$.
  The étale map $g$ furnishes an isomorphism
  $\Omega^{1}\cong\bigoplus_{i=1}^{d}\mathcal{O}dT_{i}$.
  Together with~\cite[Corollary~\ref*{cor:localdescription-of-subsections-OBqplus}]{WiersigPeriods},
  this gives the vertical morphisms in the diagram
  \begin{equation}\label{cd:localdescription-nablalaplus-cauchy}
  \begin{tikzcd}
    \OB_{\dR}^{q,+}(U)
    \arrow{r}{\nabla_{\dR}^{q,+}(U)} &
    \bigoplus_{i=1}^{d}\OB_{\dR}^{q,+}(U)dT_{i} \\
    \BB_{\dR}^{q,+}(U) \widehat{\otimes}_{k_{0}}k_{0}\left\<Z/p^{q}\right\>
    \arrow{r}{\id\widehat{\otimes}d^{q}}
    \arrow{u}{\cong} &
    \BB_{\dR}^{q,+}(U) \widehat{\otimes}_{k_{0}}\Omega_{k_{0}\left\<Z/p^{q}\right\>/k_{0}}
    \arrow{u}{\cong}\arrow[swap]{u}{dZ_{i}\mapsto dT_{i}}
  \end{tikzcd}
  \end{equation}
  for every affinoid perfectoid $U\in X_{\proet}/\widetilde{X}$.
  This diagram~(\ref{cd:localdescription-nablalaplus-cauchy}) commute.
\end{lem}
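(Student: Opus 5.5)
The plan is to compute $\nabla_{\dR}^{q,+}(U)$ on the image of the left vertical map by tracing through Definition~\ref{defn:nablaqlaplus}. Both horizontal maps are $\BB_{\dR}^{q,+}(U)$-linear and bounded: the bottom one by construction, the top one because it is built from bounded morphisms of ind-Banach sheaves. The vertical isomorphisms are likewise $\BB_{\dR}^{q,+}(U)$-linear and bounded. It therefore suffices to check commutativity on a dense $\BB_{\dR}^{q,+}(U)$-submodule of $\BB_{\dR}^{q,+}(U)\widehat{\otimes}_{k_{0}}k_{0}\langle Z/p^{q}\rangle$. We take the span of the monomials $Z^{\alpha}$, whose finite $\BB_{\dR}^{q,+}(U)$-combinations are dense in the completed tensor product.

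First, we determine the derivations that $\mathcal{T}$ induces through the bimodule structure. By the lemma preceding this one, the map $\nu^{-1}\mathcal{O}\to\OB_{\dR}^{q,+}$ is $\nu^{-1}\Dcap$-linear. Hence the vector field $\partial_{i}:=\partial/\partial T_{i}\in\mathcal{T}(X)$, dual to $dT_{i}$ under $\Omega^{1}\cong\bigoplus\mathcal{O}dT_{i}$, acts on the image of $T_{j}$ as $\delta_{ij}$. By the construction of the bimodule structure, the action is $\BB_{\dR}^{q,+}$-linear, and in particular it kills $[T_{j}^{\flat}]\in\BB_{\dR}^{q,+}(U)$. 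Since $Z_{j}\mapsto T_{j}-[T_{j}^{\flat}]$, we get $\partial_{i}(Z_{j})=\delta_{ij}$.

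Next, we use that $\mathcal{T}$ acts through derivations of $\OB_{\dR}^{q,+}$. The elements of $\mathcal{T}\subset\Dcap$ satisfy the Leibniz rule on $\mathcal{O}$, and by the uniqueness and extension in Theorem~\ref{thm:OBlaplus-bimodule} the action of $\partial_{i}$ on $\OB_{\dR}^{q,+}$ is a $\BB_{\dR}^{q,+}$-linear derivation. I would prove this by comparing $\partial_{i}(fg)$ with $\partial_{i}(f)g+f\partial_{i}(g)$: both are bounded bimodule-compatible maps that agree on the image of $\nu^{-1}\mathcal{O}\widehat{\otimes}_{k_{0}}\BB_{\dR}^{q,+}$, and that image is dense by the epimorphism~\eqref{eq:OtensorbdRstarplus-to-OBdrstarplus-epi-cauchy}. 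It follows that $\partial_{i}(Z^{\alpha})=\alpha_{i}Z^{\alpha-e_{i}}$, which is exactly $\frac{d}{dZ_{i}}Z^{\alpha}$. Unwinding the dualisation in Definition~\ref{defn:nablaqlaplus}, the $dT_{i}$-component of $\nabla_{\dR}^{q,+}(f)$ is $\partial_{i}(f)$. Therefore $\nabla_{\dR}^{q,+}(Z^{\alpha})=\sum_{i}\alpha_{i}Z^{\alpha-e_{i}}dT_{i}$, which agrees with $(\id\widehat{\otimes}d^{q})(Z^{\alpha})$ under $dZ_{i}\mapsto dT_{i}$. Linearity and continuity then give commutativity of~\eqref{cd:localdescription-nablalaplus-cauchy}.

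The main obstacle is the second step. We need the action of $\mathcal{T}$ on $\OB_{\dR}^{q,+}$ to be a derivation and to vanish on $\BB_{\dR}^{q,+}$, and this has to be extracted from the somewhat implicit uniqueness statement of the bimodule structure together with the density of $\nu^{-1}\mathcal{O}\otimes\BB_{\dR}^{q,+}$ in the ind-Banach setting. Density has to be checked section-wise on $U$, using the local description $\BB_{\dR}^{q,+}(U)\langle Z/p^{q}\rangle$. The remaining identifications, namely unwinding the dual in Definition~\ref{defn:nablaqlaplus}, are formal.
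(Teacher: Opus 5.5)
Your proposal is correct and follows essentially the same route as the paper, which likewise shows that $\nabla_{\dR}^{q,+}(U)$ is a $\BB_{\dR}^{q,+}(U)$-linear derivation vanishing on $\BB_{\dR}^{q,+}(U)$ (via $P(1)=0$ for $P\in\mathcal{T}$), and then reduces to computing it on the $Z_i$ using that $dT_i$ is dual to $\partial_i$. Your continuity reduction to monomials and your density argument for the Leibniz rule make explicit what the paper leaves implicit; note only that killing $[T_j^{\flat}]$ needs $\partial_i(1)=0$ as well as $\BB_{\dR}^{q,+}$-linearity, which you get from the $\Dcap$-linearity of $\nu^{-1}\mathcal{O}\to\OB_{\dR}^{q,+}$.
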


\begin{proof}
  The vertical isomorphisms follows from~\cite[Corollary~\ref*{cor:localdescription-of-subsections-OBla}]{WiersigPeriods}
  Now we check the commutativity. 
  Consider
  \begin{equation*}
    \nabla_{\dR}^{q,+}(U)
    \colon
    \BB_{\dR}^{q,+}(U)\left\<Z/p^{q}\right\>
    \to
    \bigoplus_{i=1}^{d}
    \BB_{\dR}^{q,+}(U)\left\<Z/p^{q}\right\>dT_{i}.
  \end{equation*}
  This is a connection in the following sense: Firstly, it is, by definition,
  $\BB_{\dR}^{q,+}(U)$-linear. Secondly, it satisfies the Leibniz rule because the sections
  of $\mathcal{T}$ satisfy the Leibniz rule. Lastly, it vanishes on $\BB_{\dR}^{q,+}(U)$:
  By definition, $\nabla_{\dR}^{q,+}$ sends a section $b\in\BB_{\dR}^{q,+}(U)$
  to the morphism
  $\nu^{-1}\mathcal{T}\to\BB_{\dR}^{q,+}\left\<\frac{Z_{1},\dots,Z_{d}}{p^{q}}\right\>$,
  $P\mapsto bP(1)$ of sheaves of $\nu^{-1}\mathcal{O}$-ind-Banach modules
  on a localisation of $X_{\proet}$. But this map is zero, again
  because the sections of $\mathcal{T}$ are derivations, thus $P(1)=0$.
  
  Therefore, to show the commutativity of the diagram~(\ref{cd:localdescription-nablalaplus-cauchy}),
  it remains to compute $\nabla_{\dR}^{q,+}(U)\left(Z_{i}\right)$ for all $i=1,\dots,d$.
  This follows from Definition~\ref{defn:nablaqlaplus}, using that
  $\left\{dT_{i}\right\}_{i=1,\dots,d}$ is the $\mathcal{O}(U)$-basis
  of $\Omega^{1}(U)$ dual to $\left\{\partial_{i}\right\}_{i=1,\dots,d}$.
\end{proof}

Now we assume $X$ to be arbitrary. We can then proceed as in Definition~\ref{defn:nablaqlaplus}
to define a morphism
$\nabla_{\dR}^{\dag,+}\colon\OB_{\dR}^{\dag,+}\to\OB_{\dR}^{\dag,+}\widehat{\otimes}_{\nu^{-1}}\nu^{-1}\Omega^{1}$
of sheaves of $\BB_{\dR}^{\dag,+}$-ind-Banach modules.
We have $\nabla_{\dR}^{\dag,+}=\varinjlim\nabla_{\dR}^{q,+}$.
Thus Lemma~\ref{lem:localdescription-nablalaplus} implies that the underlying morphism
of abelian sheaves $|\nabla_{\dR}^{\dag,+}|$ recovers the connection considered
in \S\ref{sec:CauchyCauchy}.

\begin{thm}[Poincaré Lemma]\label{thm:pioncare-lem}
  The the de Rham complex is strictly exact:
  \begin{equation*}
    0
    \longrightarrow
    \BB_{\dR}^{\dag,+}
    \longrightarrow\OB_{\dR}^{\dag,+}
    \stackrel{\nabla_{\dR}^{\dag,+}}{\longrightarrow}\OB_{\dR}^{\dag,+}
    \widehat{\otimes}_{\nu^{-1}\mathcal{O}}
    \nu^{-1}\Omega^{1}
    \stackrel{\nabla_{\dR}^{\dag,+,1}}{\longrightarrow}\dots
    \stackrel{\nabla_{\dR}^{\dag,+,d-1}}{\longrightarrow}\OB_{\dR}^{\dag,+}
    \widehat{\otimes}_{\nu^{-1}\mathcal{O}}
    \nu^{-1}\Omega^{d}
    \to 0.
  \end{equation*}
\end{thm}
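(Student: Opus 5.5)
Strict exactness is a local statement on $X_{\proet}$, so we first reduce to the case where $X$ is affinoid with an étale map $g\colon X\to\TT^{d}$. Strict exactness of a complex of sheaves of ind-Banach spaces can be checked on a basis of the site. We therefore test sections on affinoid perfectoid $U\in X_{\proet}/\widetilde X$. Since $\OB_{\dR}^{\dag,+}=\varinjlim_q\OB_{\dR}^{q,+}$, $\BB_{\dR}^{\dag,+}=\varinjlim_q\BB_{\dR}^{q,+}$, $\nabla_{\dR}^{\dag,+}=\varinjlim_q\nabla_{\dR}^{q,+}$, and filtered colimits of strictly exact sequences of ind-Banach spaces are strictly exact, it suffices to prove strict exactness for each $q\gg0$. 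By Lemma~\ref{lem:localdescription-nablalaplus} and the identification $\Omega^{i}\cong\bigoplus\mathcal{O}\,dT_{j_1}\wedge\dots\wedge dT_{j_i}$, the $q$-th de Rham complex on $U$ becomes
\begin{equation*}
0\to B^{q}\to B^{q}\widehat{\otimes}_{k_0}k_0\langle Z/p^{q}\rangle\xrightarrow{\id\widehat{\otimes}d^{q}}B^{q}\widehat{\otimes}_{k_0}\Omega^{1}_{k_0\langle Z/p^{q}\rangle/k_0}\to\cdots\to B^{q}\widehat{\otimes}_{k_0}\Omega^{d}_{k_0\langle Z/p^{q}\rangle/k_0}\to0,
\end{equation*}
with $B^{q}:=\BB_{\dR}^{q,+}(U)$. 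The higher differentials are determined by the first one via the Leibniz rule, so they are also $\id\widehat{\otimes}d^{q}$.

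The problem thus reduces to the de Rham complex of the closed polydisc $k_0\langle Z/p^{q}\rangle$ of radius $|p|^{q}$, completed-tensored with $B^{q}$. We do not try to prove exactness of this complex as it stands. Instead we prove it in the colimit over $q$, or with a shift of radius, as is standard for overconvergent de Rham cohomology. Concretely, we construct an explicit contracting homotopy. By the Künneth formula (the complex is the completed tensor product over $j$ of one-variable complexes), it suffices to treat $d=1$. There integration $h\colon \sum a_n Z^n\,dZ\mapsto\sum a_n Z^{n+1}/(n+1)$ satisfies $dh=\id$ and $hd=\id-\mathrm{ev}_{Z=0}$. The map $h$ does not preserve $k_0\langle Z/p^{q}\rangle$, because $|n+1|^{-1}$ grows. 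However, $|1/(n+1)|\le (n+1)$, and this polynomial growth is absorbed by any strictly smaller radius. Hence $h$ is a bounded map from level $q$ to level $q+1$, with norm $\le C$ depending only on $|p|$.

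Tensoring with $B^{q}$ and composing with the bounded transition maps $B^{q}\to B^{q+1}$ gives a bounded homotopy between the inclusion of level $q$ into level $q+1$ and the projection onto constants. The Koszul/Künneth tensor-product homotopy handles $d$ variables. In the colimit this shows that the complex is exact and that every cocycle at level $q$ has a bounded primitive at level $q+1$. This is exactly strict exactness of the filtered colimit in $\IndBan_{k_0}$: the image is closed, and the induced map to the image is an isomorphism of ind-objects. Exactness at the left end follows from Lemma~\ref{lem:horizontalsections-via-ddZj}, which gives $\ker d^{q}=B^{q}$, strictly via the evaluation-at-$0$ retraction.

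The main obstacle is the functional-analytic bookkeeping. First, we must check that strict exactness in ind-Banach spaces is detected sectionwise on affinoid perfectoids and commutes with these filtered colimits. Second, we must check that the completed tensor product $B^{q}\widehat{\otimes}_{k_0}-$ preserves the bounded homotopy, using $\|f_1f_2\|\le\|f_1\|\|f_2\|$. Third, the homotopy should really only be required in the colimit over $q$, since the level-$q$ complex alone need not be strictly exact. We would handle the first two points by working with explicit norms on $B^{q}\langle Z/p^{q}\rangle$ and the estimate $\sup_n|p|^{n}(n+1)<\infty$. For the third, we would show that the ind-system of complexes is ind-isomorphic to one with bounded contracting homotopies.
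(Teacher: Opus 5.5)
Your argument is correct. It shares the paper's reduction: work locally with an \'etale chart $X\to\TT^{d}$, test on affinoid perfectoids $U\in X_{\proet}/\widetilde X$, and use Lemma~\ref{lem:localdescription-nablalaplus} to identify the complex with $B^{q}\widehat{\otimes}_{k_0}\Omega^{\bullet}_{k_0\langle Z/p^{q}\rangle/k_0}$. Where you differ is in how exactness and strictness are obtained.

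The paper simply asserts that exactness follows from the local description. It then deduces strictness abstractly from algebraic exactness of the colimit, via a variant of the open mapping theorem for ind-Banach (bornological) spaces.

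You instead build the integration homotopy from level $q$ to level $q+1$, using $|1/(n+1)|\le n+1$ and $\sup_n (n+1)|p|^{n}<\infty$, and extend it to $d$ variables by the tensor-product formula. On the augmented complex this makes every transition map null-homotopic through bounded maps. You then get a bounded map from level-$q$ cocycles to level-$(q+1)$ cochains of one degree lower, whose composite with $d$ is the transition map. This gives an inverse, in the ind-category, to the canonical map from the coimage of the previous differential to the kernel, which is exactly strict exactness in $\IndBan_{k_0}$.

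Your route is self-contained. It makes explicit the one real analytic input, the loss of radius under integration, which the paper's one-line ``exactness follows from the lemma'' leaves implicit. It also correctly shows that exactness holds only in the colimit over $q$: the level-$q$ complex is not exact, since the closed disc has non-Hausdorff de Rham cohomology.

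The paper's route is shorter and does not depend on choosing a homotopy. However, it relies on the hypotheses of the open mapping theorem, and it still needs your integration estimate to get algebraic exactness of the colimit in the first place.

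One cosmetic fix: delete your early sentence that ``it suffices to prove strict exactness for each $q\gg0$.'' It is a valid sufficient condition, but, as you note yourself later, it is never satisfied.
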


\begin{proof}
  We may assume that $X$ is affinoid
  and equipped with an étale morphism $X\to\TT^{d}$,
  giving rise to the pro-étale covering $\widetilde{X}\to X$.
  Then the exactness follows from~\ref{lem:localdescription-nablalaplus}.
  The strict exactness follows from a variant of the open mapping theorem, see~\cite[Theorem 4.9]{Ba15}.
\end{proof}


\subsection{Solutions of $\cal{O}$-modules with integrable connection}
\label{subsec:sol-of-MIC}

\begin{defn}
  A sheaf of $\cal{O}$-ind-Banach modules is \emph{locally finite free}
  if it is, locally, isomorphic to a finite direct sum of copies of $\cal{O}$
  as a sheaf of $\cal{O}$-ind-Banach modules.
\end{defn}

We would like to give a simpler description of the solution functor
$\Sol=\SolB_{\dR}^{\dag,+}$
on the category of $\Dcap$-ind-Banach modules which are locally finite free
as $\cal{O}$-modules. To do this, we fix an arbitrary complex $\cal{M}^{\bullet}$
of $\I\left(\Dcap\right)$-module objects on $X$. We continue to
fix the notation as in \S\ref{subsec:solutionderHamfunctorsDcapmod-solutionpaper}.
The canonical morphisms $\OB_{\dR}^{q,+}\to\OB_{\dR}^{\dag,+}$ induce
a morphism $\OB_{\dR}^{*,+}\to\sigma_{*}\OB_{\dR}^{\dag,+}$ of $\lambda^{-1}\Dcap$-$\BB_{\dR}^{*,+}$-bimodule
objects, cf. the proof of Lemma~\ref{lem:OBdRstarplus-bimodule}.
As $\I$ is strongly monoidal, cf.~\cite[Lemma 3.33]{Bo21}, we get the morphism
\begin{equation*}
   \I\left(\OB_{\dR}^{*,+}\right)\to\R\sigma_{*}\I\left(\OB_{\dR}^{\dag,+}\right)
\end{equation*}
in complexes of 
$\lambda^{-1}\I\left(\Dcap\right)$-$\I\left(\BB_{\dR}^{*,+}\right)$-bimodule objects.
Next, this gives
\begin{equation}\label{eq:solexplicitdescription-MIC-constructmap-anotherlemma}
   \R\shHom_{\lambda^{-1}\I\left(\Dcap\right)}\left(
     \lambda^{-1}\cal{M}^{\bullet},
     \I\left(\OB_{\dR}^{*,+}\right)
     \right)
   \to
   \R\shHom_{\lambda^{-1}\I\left(\Dcap\right)}\left(
     \lambda^{-1}\cal{M}^{\bullet},
     \R\sigma_{*}\I\left(\OB_{\dR}^{\dag,+}\right)
     \right),
\end{equation}
which is a morphism
of $\left(\BB_{\dR}^{*,+}\right)$-module objects. Now use
$\nu^{-1}=\left(\lambda\circ\sigma\right)^{-1}=\sigma^{-1}\circ\lambda^{-1}$ to compute
\begin{equation}\label{eq:solexplicitdescription-MIC-constructmap}
\begin{split}
   \R\shHom_{\lambda^{-1}\I\left(\Dcap\right)}\left(
     \lambda^{-1}\cal{M}^{\bullet},
     \R\sigma_{*}\I\left(\OB_{\dR}^{\dag,+}\right)
     \right)
   &\cong\R\sigma_{*}\R\shHom_{\sigma^{-1}\lambda^{-1}\I\left(\Dcap\right)}\left(
     \sigma^{-1}\lambda^{-1}\cal{M}^{\bullet},
     \I\left(\OB_{\dR}^{\dag,+}\right)
     \right) \\
   &=\R\sigma_{*}\R\shHom_{\nu^{-1}\I\left(\Dcap\right)}\left(
     \nu^{-1}\cal{M}^{\bullet},
     \I\left(\OB_{\dR}^{\dag,+}\right)
     \right),
\end{split}
\end{equation}
Thanks to~(\ref{eq:solexplicitdescription-MIC-constructmap}), we can now
produce the following adjunct of~(\ref{eq:solexplicitdescription-MIC-constructmap-anotherlemma})
\begin{equation*}
   \rL\sigma^{*}
   \R\shHom_{\lambda^{-1}\I\left(\Dcap\right)}\left(
     \lambda^{-1}\cal{M}^{\bullet},
     \I\left(\OB_{\dR}^{*,+}\right)
     \right)
   \to
   \R\shHom_{\nu^{-1}\I\left(\Dcap\right)}\left(
     \nu^{-1}\cal{M}^{\bullet},
     \I\left(\OB_{\dR}^{\dag,+}\right)
     \right)
\end{equation*}
with respect to the adjunction $\rL\sigma^{*}\dashv \R\sigma_{*}$,
cf.~\cite[\S 3.7]{Bo21}.
This is~(\ref{eq:solexplicitdescription-MIC})
in Lemma~\ref{lem:solexplicitdescription-MIC}:

\begin{lem}\label{lem:solexplicitdescription-MIC}
  Given a $\Dcap$-ind-Banach module $\cal{M}$ which is locally finite free as an $\cal{O}$-module,
  \begin{equation}\label{eq:solexplicitdescription-MIC}
   \Sol\left(\I\left(\cal{M}\right)\right)
   \isomap
     \R\shHom_{\nu^{-1}\I\left(\Dcap\right)}
      \left(
        \nu^{-1}\I\left(\cal{M}\right),
        \I\left(\OB_{\dR}^{\dag,+}\right)
      \right)
  \end{equation}
  is an isomorphism of $\I\left(\BB_{\dR}^{\dag,+}\right)$-module objects.
\end{lem}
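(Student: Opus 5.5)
The map \eqref{eq:solexplicitdescription-MIC} is built from two ingredients: the comparison $\I(\OB_{\dR}^{*,+})\to\R\sigma_{*}\I(\OB_{\dR}^{\dag,+})$ and the adjunction $\rL\sigma^{*}\dashv\R\sigma_{*}$. The plan is to compute both sides explicitly, using that $\cal{M}$ is locally finite free over $\cal{O}$, and then to check that the map between the two explicit answers is the obvious colimit identification. Whether \eqref{eq:solexplicitdescription-MIC} is an isomorphism is local on $X_{\proet}$. So we may assume $X$ is affinoid with an étale map $X\to\TT^{d}$, and that $\cal{M}\cong\cal{O}^{n}$ as $\cal{O}$-modules, with $\Dcap$-action determined by a flat connection $\nabla_{\cal{M}}$.

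\textbf{Step 1: a Spencer-type resolution.} Since $\cal{M}$ is locally finite free over $\cal{O}$, the Spencer complex
\[
\Dcap\widehat{\otimes}_{\cal{O}}\wedge^{\bullet}\cal{T}\widehat{\otimes}_{\cal{O}}\cal{M}
\]
should be a strict resolution of $\cal{M}$ by $\Dcap$-modules of the form $\Dcap\widehat{\otimes}_{\cal{O}}(\text{finite free }\cal{O}\text{-module})$. Locally these are finite free $\Dcap$-modules. I would quote this from the $\Dcap$-module literature (Bode's framework \cite{Bo21}), or prove it locally by the standard filtration argument on $\Dcap=\varprojlim$ of Fréchet completions.

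Applying $\lambda^{-1}$ or $\nu^{-1}$ and $\R\shHom$ into $\I(\OB_{\dR}^{*,+})$, respectively $\I(\OB_{\dR}^{\dag,+})$, each term computes without higher derived functors. The reason is that $\R\shHom_{\Dcap}(\Dcap\widehat{\otimes}_{\cal{O}}\cal{O}^{n},\cal{F})\cong\cal{F}^{n}$. This uses that $\I$ is strongly monoidal and that free modules are projective relative to the hom, which is the point needing care in $\LH$.

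Hence both sides become de Rham-type complexes:
\[
\I(\OB_{\dR}^{*,+})\widehat{\otimes}_{\lambda^{-1}\cal{O}}\lambda^{-1}(\cal{M}^{\vee}\otimes\Omega^{\bullet})
\quad\text{and}\quad
\I(\OB_{\dR}^{\dag,+})\widehat{\otimes}_{\nu^{-1}\cal{O}}\nu^{-1}(\cal{M}^{\vee}\otimes\Omega^{\bullet}),
\]
with the connections induced by $\nabla_{\dR}^{q,+}$ and $\nabla_{\dR}^{\dag,+}$ respectively. The map \eqref{eq:solexplicitdescription-MIC-constructmap-anotherlemma} is the termwise map induced by $\OB_{\dR}^{q,+}\to\OB_{\dR}^{\dag,+}$.

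\textbf{Step 2: compute $\rL\sigma^{*}$.} The complex on the left consists termwise of finite free $\I(\BB_{\dR}^{*,+})$-modules tensored with $\OB$-type objects. I would show that $\rL\sigma^{*}$ of these terms is just $\sigma^{-1}(-)\widehat{\otimes}_{\sigma^{-1}\BB_{\dR}^{*,+}}\BB_{\dR}^{\dag,+}$, and that this equals the filtered colimit over $q$. The key input is the local descriptions $\OB_{\dR}^{q,+}\cong\BB_{\dR}^{q,+}\langle Z/p^{q}\rangle$ and $\OB_{\dR}^{\dag,+}=\varinjlim_{q}\OB_{\dR}^{q,+}$. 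Because $\sigma^{-1}$ is computed by the colimit over $q$, and filtered colimits are exact in $\IndBan_{\I(k_{0})}$ (a Grothendieck category), no higher Tor terms survive: flatness is checked after passing to the colimit, where $\BB_{\dR}^{q,+}\langle Z/p^{q}\rangle$ is flat over $\BB_{\dR}^{q,+}$ as a completed free-type module. Once this is in place, the adjunct map is termwise the isomorphism
\[
\varinjlim_{q}\OB_{\dR}^{q,+}\widehat{\otimes}\cdots\isomap\OB_{\dR}^{\dag,+}\widehat{\otimes}\cdots,
\]
compatible with the differentials. This gives the claim.

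\textbf{Main obstacle.} I expect Step 2 to be the hardest: justifying that $\rL\sigma^{*}$ agrees with the underived colimit on these complexes. This requires a flatness or acyclicity statement for $\I(\OB_{\dR}^{*,+})$ over $\I(\BB_{\dR}^{*,+})$ in the left heart, and care that $\rL\sigma^{*}$ commutes with the relevant tensor products. My first attempt would be to reduce to flatness of $\I(\BB^{q}\langle Z/p^{q}\rangle)$ over $\I(B^{q})$. Such modules are completed direct sums of copies of $B^{q}$ (rescaled monomials), hence flat by \cite{Bo21}. After that reduction, what remains is exactness of filtered colimits.
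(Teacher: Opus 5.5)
Your outline would go through, but it takes a detour at exactly the step you call the ``main obstacle'', and that obstacle is not really there. The paper's key observation is that $\sigma^{-1}\I(\BB_{\dR}^{*,+})\to\I(\BB_{\dR}^{\dag,+})$ is already an isomorphism. The reason is that $\sigma^{-1}$ is the colimit over $q$, and $\I$ commutes with this filtered colimit; this is cocontinuity of $\I$, \cite[Proposition 2.1.16]{Bo21}, a statement about $\I$ and not merely exactness of filtered colimits in the left heart. Hence
\[
\rL\sigma^{*}(-)=\I(\BB_{\dR}^{\dag,+})\widehat{\otimes}^{\rL}_{\sigma^{-1}\I(\BB_{\dR}^{*,+})}\sigma^{-1}(-)\cong\sigma^{-1}(-)
\]
is base change along an isomorphism of structure sheaves. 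It is therefore exact, and $\rL\sigma^{*}\I(\OB_{\dR}^{*,+})\cong\I(\OB_{\dR}^{\dag,+})$ holds with no flatness input at all. The paper then resolves $\mathcal{M}$ by locally free $\Dcap$-modules (Spencer) and uses strong exactness of $\lambda^{-1}$ and $\nu^{-1}$ to reduce to $\mathcal{M}=\Dcap$, where both sides are $\I(\OB_{\dR}^{\dag,+})$. It needs no étale coordinates, no local description $\OB_{\dR}^{q,+}\cong\BB_{\dR}^{q,+}\langle Z/p^{q}\rangle$, and no de Rham complexes. Your route instead needs two extra inputs. First, explicit de Rham descriptions of both sides, plus a check that the adjunct map matches them termwise; this is really the content of the later Proposition~\ref{prop:Solplus-and-Sol-for-IC}. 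Second, strong flatness of $\I(\OB_{\dR}^{*,+})$ over $\I(\BB_{\dR}^{*,+})$ on $X_{\proet}\times\NN_{\gg0}^{\op}$, which you only sketch via ``completed direct sums of copies of $B^{q}$''. That flatness is plausible, since it reduces to flatness of $k_{0}\langle Z/p^{q}\rangle$ over the discretely valued field $k_{0}$. But it is genuine extra work in the left heart, and your own remark that ``$\sigma^{-1}$ is computed by the colimit over $q$'' already contains the observation that makes it unnecessary. Your approach yields an explicit level-$q$ description of $\Sol$ as a by-product; the paper's gives a short argument that does not depend on the structure of $\OB_{\dR}^{*,+}$ at all.
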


\begin{proof}
  Firstly, we note that the canonical morphism
  $\sigma^{-1}\I\left(\BB_{\dR}^{*,+}\right)\to\I\left(\BB_{\dR}^{\dag,+}\right)$ is an isomorphism;
  this follows directly from the definitions and the cocontinuity of $\I$,
  cf.~\cite[Proposition 2.1.16]{Bo21}, which applies thanks to~\cite[Lemma 4.20]{Bo21}.
  Compute
  \begin{equation}\label{eq:computepullback-lem:solexplicitdescription-MIC}
  \begin{split}
    \rL\sigma^{*}\I\left(\OB_{\dR}^{*,+}\right)
    &=\I\left(\BB_{\dR}^{\dag,+}\right)
      \widehat{\otimes}_{\sigma^{-1}\I\left(\BB_{\dR}^{*,+}\right)}^{\rL}\sigma^{-1}\I\left(\OB_{\dR}^{*,+}\right) \\
    &\cong\sigma^{-1}\I\left(\OB_{\dR}^{*,+}\right)
    \cong\I\left(\sigma^{-1}\OB_{\dR}^{*,+}\right)
    \cong\I\left(\OB_{\dR}^{\dag,+}\right).
  \end{split}
  \end{equation}
  Next, we observe that $\cal{M}^{\bullet}$ admits a locally free resolution by
  $\Dcap$-ind-Banach modules, cf.~\cite[Theorem 6.1]{Bo21}; this is the Spencer resolution.
  By~\cite[Corollary 1.2.28]{Sch99}, we may apply $\I$ to get a locally
  free resolution of $\I\left(\cal{M}^{\bullet}\right)$ by $\I\left(\Dcap\right)$-module
  objects. And since $\lambda^{-1}$ and $\nu^{-1}$ are strongly exact,
  it suffices to check that~(\ref{eq:solexplicitdescription-MIC}) is an isomorphism
  for $\cal{M}=\Dcap$. This comes from
  \begin{equation*}
    \Sol\left(\I\left(\Dcap\right)\right)
    =\rL\sigma^{*}\I\left(\OB_{\dR}^{*,+}\right)
    \cong\I\left(\OB_{\dR}^{\dag,+}\right)
    \cong\R\shHom_{\nu^{-1}\I\left(\Dcap\right)}
      \left(
        \nu^{-1}\I\left(\Dcap\right),
        \I\left(\OB_{\dR}^{\dag,+}\right)
      \right),
  \end{equation*}
  where we have used~(\ref{eq:computepullback-lem:solexplicitdescription-MIC}).
\end{proof}

\begin{defn}
  Let $\cal{N}$ denote a vector bundle with flat connection $\nabla_{\cal{N}}$.
  This induces a connection
  \begin{equation*}
    \nabla\colon
    \nu^{-1}\mathcal{N}
    \widehat{\otimes}_{\nu^{-1}\mathcal{O}}
    \OB_{\dR}^{\dag,+}
    \to
    \nu^{-1}\mathcal{N}
    \widehat{\otimes}_{\nu^{-1}\mathcal{O}}
    \OB_{\dR}^{\dag,+}
    \widehat{\otimes}_{\nu^{-1}\mathcal{O}}\nu^{1}\Omega^{1}
  \end{equation*}
  defined by the formula
  $\nabla:=\left(\nu^{-1}\nabla_{\cal{N}}\right)\widehat{\otimes}\id + \id \widehat{\otimes}\nabla_{\dR}^{\dag,+}$.
  We write
  \begin{equation*}
    \left(\nu^{-1}\mathcal{N}\widehat{\otimes}_{\nu^{-1}\mathcal{O}}
    \OB_{\dR}^{\dag,+}\right)^{\nabla=0} := \ker\nabla.
  \end{equation*}
\end{defn}

\begin{prop}\label{prop:Solplus-and-Sol-for-IC}
  Let $\mathcal{M}$ denote a $\Dcap$-ind-Banach module
  which is locally finite free as an $\mathcal{O}$-module.
  $\Sol\left( \mathcal{M} \right)$ is concentrated in degree $0$ and
  \begin{equation*}
    \Ho^{0}\left(\Sol\left( \I\left(\mathcal{M}\right) \right)\right)
    \cong
    \I\left(\left(\nu^{-1}\DD\left(\mathcal{M}\right)
    \widehat{\otimes}_{\nu^{-1}\mathcal{O}}
    \OB_{\dR}^{\dag,+}\right)^{\nabla=0}\right).
  \end{equation*}
\end{prop}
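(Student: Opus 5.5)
We start from Lemma~\ref{lem:solexplicitdescription-MIC}, which identifies $\Sol(\I(\mathcal{M}))$ with $\R\shHom_{\nu^{-1}\I(\Dcap)}(\nu^{-1}\I(\mathcal{M}),\I(\OB_{\dR}^{\dag,+}))$. The plan is to compute this derived Hom with the Spencer resolution of $\mathcal{M}$, which applies because $\mathcal{M}$ is locally finite free over $\mathcal{O}$ and hence its connection is flat. Concretely, we use the complex
\begin{equation*}
0\to\Dcap\widehat{\otimes}_{\mathcal{O}}\wedge^{d}\mathcal{T}\widehat{\otimes}_{\mathcal{O}}\mathcal{M}\to\cdots\to\Dcap\widehat{\otimes}_{\mathcal{O}}\mathcal{T}\widehat{\otimes}_{\mathcal{O}}\mathcal{M}\to\Dcap\widehat{\otimes}_{\mathcal{O}}\mathcal{M}\to\mathcal{M}\to0,
\end{equation*}
which is a resolution by $\Dcap$-modules that are locally free (locally finite direct sums of $\Dcap$), as in \cite[Theorem 6.1]{Bo21}. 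We apply $\nu^{-1}$, which is exact, and then $\I$, which preserves such resolutions by \cite[Corollary 1.2.28]{Sch99}. The resulting terms are acyclic for $\shHom_{\nu^{-1}\I(\Dcap)}(-,\I(\OB_{\dR}^{\dag,+}))$, so the derived Hom is computed termwise.

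For each term, $\shHom_{\nu^{-1}\Dcap}(\nu^{-1}(\Dcap\widehat{\otimes}\wedge^{i}\mathcal{T}\widehat{\otimes}\mathcal{M}),\OB_{\dR}^{\dag,+})$ is identified with $\nu^{-1}(\wedge^{i}\mathcal{T}\widehat{\otimes}\mathcal{M})^{\vee}\widehat{\otimes}_{\nu^{-1}\mathcal{O}}\OB_{\dR}^{\dag,+}$. We have $\mathcal{M}^{\vee}=\DD(\mathcal{M})$ up to the standard shift and twist conventions for integrable connections, and $(\wedge^{i}\mathcal{T})^{\vee}=\Omega^{i}$. So the complex computing $\Sol(\I(\mathcal{M}))$ becomes $\I$ applied to the de Rham complex
\begin{equation*}
\nu^{-1}\mathcal{M}^{\vee}\widehat{\otimes}_{\nu^{-1}\mathcal{O}}\OB_{\dR}^{\dag,+}\xrightarrow{\nabla}\nu^{-1}\mathcal{M}^{\vee}\widehat{\otimes}_{\nu^{-1}\mathcal{O}}\OB_{\dR}^{\dag,+}\widehat{\otimes}_{\nu^{-1}\mathcal{O}}\nu^{-1}\Omega^{1}\to\cdots.
\end{equation*}
We must check that the differentials are the tensor-product connection $\nu^{-1}\nabla_{\mathcal{M}^{\vee}}\widehat{\otimes}\id+\id\widehat{\otimes}\nabla_{\dR}^{\dag,+}$. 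This follows from Definition~\ref{defn:nablaqlaplus}, which defines $\nabla_{\dR}^{\dag,+}$ through the action of $\mathcal{T}\subset\Dcap$, together with a local check in coordinates using Lemma~\ref{lem:localdescription-nablalaplus}. In degree $0$ the complex is $\ker\nabla$, which gives the claimed $\Ho^{0}$, provided $\I$ commutes with kernels. It does, since $\I$ is left exact.

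The main obstacle is showing that the complex has no higher cohomology. Work locally on an affinoid $X$ with an étale map $X\to\TT^{d}$, over which $\mathcal{M}^{\vee}\cong\mathcal{O}^{n}$. Lemma~\ref{lem:localdescription-nablalaplus} reduces the question to the twisted de Rham complex of $M^{r}=\bigoplus C^{r}$ with $C^{r}=B^{r}\langle Z/p^{r}\rangle$. The key step is to trivialize the connection. By Proposition~\ref{prop:Mq-horizontals-sections-after-Mr}, after passing to $r\gg0$ the Taylor series $T(\iota^{q,r}(m_{j}))$ form a horizontal basis. We then need to show they form a $C^{r}$-basis of $M^{r}$, not just a $B^{r}$-basis of $M^{r,\nabla=0}$. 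The transition matrix has the form $\mathrm{id}+O(Z)$, and we would argue it is invertible after enlarging $r$ once more, using the same estimates as in Lemma~\ref{lem:T-converges-for-small-radius}. Over $\OB_{\dR}^{\dag,+}=\varinjlim_{r}$ this gives an isomorphism of $\nu^{-1}\mathcal{M}^{\vee}\widehat{\otimes}\OB_{\dR}^{\dag,+}$ with $(\OB_{\dR}^{\dag,+})^{n}$ carrying the trivial connection $\nabla_{\dR}^{\dag,+}$. Theorem~\ref{thm:pioncare-lem} then shows that the complex is strictly exact in positive degrees.

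Strictness is what allows us to conclude inside $\IndBan_{\I(k_{0})}$ that $\I$ of the complex is concentrated in degree $0$. This completes the argument.
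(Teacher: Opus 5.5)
Your proposal is correct and follows the paper's skeleton: Lemma~\ref{lem:solexplicitdescription-MIC}, the Spencer resolution, the termwise identification with $\nu^{-1}\DD(\mathcal M)\widehat{\otimes}_{\nu^{-1}\mathcal O}\OB_{\dR}^{\dag,+}\widehat{\otimes}_{\nu^{-1}\mathcal O}\nu^{-1}\Omega^{\bullet}$, and the Poincar\'e lemma. Three small corrections. Integrability of the connection comes from the $\Dcap$-action, not from local freeness. Your resolution of $\mathcal M$ should be obtained, as in the paper, by tensoring $S^{\bullet}\to\mathcal O$ with $\mathcal M$ and using the tensor identity. And $\mathcal M^{\vee}\cong\DD(\mathcal M)$ is exactly \cite[Proposition~7.3]{Bo21} in the paper's normalization, so no conventions need adjusting.

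Where you genuinely diverge is the vanishing in positive degrees. The paper only observes that $\nu^{-1}\DD(\mathcal M)$ is strongly flat over $\nu^{-1}\mathcal O$ and then cites Theorem~\ref{thm:pioncare-lem}. You instead first trivialize the connection locally using the Taylor-series basis of \S\ref{subsubsec:convergingsolutions}, and only then apply the Poincar\'e lemma. Your extra step supplies exactly what the flatness argument does not. The differential is $\nu^{-1}\nabla_{\DD(\mathcal M)}\widehat{\otimes}\id+\id\widehat{\otimes}\nabla_{\dR}^{\dag,+}$, and $\nabla_{\dR}^{\dag,+}$ is not $\nu^{-1}\mathcal O$-linear. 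So exactness of the de Rham complex of $\OB_{\dR}^{\dag,+}$ cannot simply be tensored with a flat $\nu^{-1}\mathcal O$-module. For example, on $k[z]$ with $\nabla=d+z\,dz$ the twisted complex has $H^{1}\neq0$, although the untwisted one is exact.

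Your route therefore costs the convergence estimates of Section~\ref{sec:CauchyCauchy}, but it actually proves the vanishing. As a by-product it shows that $\Ho^{0}$ is locally free, with $\nu^{-1}\DD(\mathcal M)\widehat{\otimes}_{\nu^{-1}\mathcal O}\OB_{\dR}^{\dag,+}\cong\Ho^{0}\widehat{\otimes}_{\BB_{\dR}^{\dag,+}}\OB_{\dR}^{\dag,+}$ locally.

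Finally, the invertibility you need requires no further enlargement of $r$. The estimate in the proof of Lemma~\ref{lem:T-converges-for-small-radius} shows that the matrix of the $T(\iota^{q,r}(m_{j}))$ in the basis $\iota^{q,r}(m_{j})$ is $\mathrm{id}+N$, with every entry of $N$ of norm at most $\rho:=|p|^{r}\varpi^{-1}\|\nabla^{r}\|<1$. Hence the Neumann series converges in $M_{n}(C^{r})$. The resulting isomorphism is bounded with bounded inverse and compatible in $r$, so strict exactness transfers.
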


\begin{proof}[Proof of Proposition~\ref{prop:Solplus-and-Sol-for-IC}]
  To increase readability, we omit the symbol $\I$.
  All operations are carried out in the left heart, or the derived category of the left heart.
  Implicitly, we often invoke that fact that $\I$ sends
  strictly exact complexes to exact complexes, cf.~\cite[Corollary 1.2.28]{Sch99}.
  
  Recall~\cite[Proposition 7.3]{Bo21}, which gives a functorial isomorphism
  \begin{equation}\label{eq:duality-IC}
    \DD\left(\mathcal{M}\right)\cong\shHom_{\mathcal{O}}\left(\mathcal{M},\mathcal{O}\right).
  \end{equation}
  Next, $S^{\bullet}\to\mathcal{O}$ denotes the Spencer resolution, cf.~\cite[Theorem 6.1]{Bo21}.
  We have $S^{-i}=\Dcap\widehat{\otimes}_{\mathcal{O}}\wedge^{i}\mathcal{T}$ for all $i\in\NN$,
  and $S^{-i}=0$ for all $i<0$. Here, $\mathcal{T}$ denotes the tangent sheaf,
  cf. Definition~\ref{defn:tangentsheaf}. The Spencer resolution
  is a locally free resolution of $\Dcap$-ind-Banach modules.
  It is thus a resolution by strongly flat $\mathcal{O}$-ind-Banach modules,
  Since $\nu^{-1}$ is strongly exact,
  cf.~\cite[\S 3.7]{Bo21},
  $\nu^{-1}S^{\bullet}\to\nu^{-1}\mathcal{O}$ is a locally free solution of
  $\nu^{-1}\Dcap$-ind-Banach modules. It follows from
  \cite[Corollary 5.36 and Lemma 6.5]{Bo21}
  that it is a resolution by strongly flat $\nu^{-1}\mathcal{O}$-modules.
  Now compute:
  \begin{equation}\label{eq:dRplus-for-IC-computation-1}
  \begin{split}
    &\Sol\left(\mathcal{M}\right) \\
    &=\R\shHom_{\nu^{-1}\Dcap}\left(\nu^{-1}\mathcal{M},\OB_{\dR}^{\dag,+}\right) \\
    &=\R\shHom_{\nu^{-1}\Dcap}\left(
     \nu^{-1}\mathcal{M}\widehat{\otimes}_{\nu^{-1}\mathcal{O}}^{\rL}\nu^{-1}\mathcal{O},
     \OB_{\dR}^{\dag,+}\right) \\
    &\cong\R\shHom_{\nu^{-1}\Dcap}\left(
     \nu^{-1}\mathcal{M}\widehat{\otimes}_{\nu^{-1}\mathcal{O}}\nu^{-1}S^{\bullet},
     \OB_{\dR}^{\dag,+}\right) \\
    &\cong
    \R\shHom_{\nu^{-1}\Dcap}\left(
     \nu^{-1}\mathcal{M}\widehat{\otimes}_{\nu^{-1}\mathcal{O}}
     \left(\nu^{-1}\Dcap\widehat{\otimes}_{\nu^{-1}\mathcal{O}}\nu^{-1}\kern -3,00pt\wedge^{\bullet}\mathcal{T}\right),
     \OB_{\dR}^{\dag,+}\right) \\
    &\cong\R\shHom_{\nu^{-1}\mathcal{O}}\left(
     \nu^{-1}\mathcal{M}\widehat{\otimes}_{\nu^{-1}\mathcal{O}}
     \nu^{-1}\kern -3,00pt\wedge^{\bullet}\mathcal{T},
     \OB_{\dR}^{\dag,+}\right).
     \end{split}
  \end{equation}
  Since $\nu^{-1}\mathcal{M}\widehat{\otimes}_{\nu^{-1}\mathcal{O}}
     \nu^{-1}\kern -3,00pt\wedge^{\bullet}\mathcal{T}$
  is a complex of locally finite free $\nu^{-1}\mathcal{O}$-modules.
  Thus it computes the $\R\shHom$.
  Now continue with the computation:
  \begin{equation}\label{eq:dRplus-for-IC-computation-2}
  \begin{split}
    \Sol\left(\mathcal{M}\right)
    &\stackrel{\text{(\ref{eq:dRplus-for-IC-computation-1})}}{\cong}
    \shHom_{\nu^{-1}\mathcal{O}}\left(
     \nu^{-1}\mathcal{M}\widehat{\otimes}_{\nu^{-1}\mathcal{O}}
     \nu^{-1}\kern -3,00pt\wedge^{\bullet}\mathcal{T},
     \OB_{\dR}^{\dag,+}\right) \\
    &\stackrel{\text{(\ref{eq:duality-IC})}}{\cong}
    \nu^{-1}\DD\left(
     \mathcal{M}\right)
     \widehat{\otimes}_{\nu^{-1}\mathcal{O}}
     \left(
     \OB_{\dR}^{\dag,+}\widehat{\otimes}_{\nu^{-1}\mathcal{O}}\nu^{-1}\Omega^{\bullet}
     \right).
     \end{split}
  \end{equation}
  Because $\DD\left(\mathcal{M}\right)$
  is locally finite free as an $\mathcal{O}$-module by~(\ref{eq:duality-IC}),
  $\nu^{-1}\DD\left(\mathcal{M}\right)$ is strongly flat as
  a $\nu^{-1}\mathcal{O}$-ind-Banach module. It follows
  from~(\ref{eq:dRplus-for-IC-computation-2})
  and Theorem~\ref{thm:pioncare-lem}
  that $\Sol\left(\mathcal{M}\right)$
  is concentrated in degree zero.
  More precisely,
  \begin{equation*}
  \begin{split}
    \Ho^{0}\left(\Sol\left(\mathcal{M}\right)\right)
    &\stackrel{\text{(\ref{eq:dRplus-for-IC-computation-2})}}{\cong}
     \Ho^{0}\left(\nu^{-1}\DD\left(
     \mathcal{M}\right)
     \widehat{\otimes}_{\nu^{-1}\mathcal{O}}
     \left(
     \OB_{\dR}^{\dag,+}\widehat{\otimes}_{\nu^{-1}\mathcal{O}}\nu^{-1}\Omega^{\bullet}
     \right)\right) \\
     &=\left(\nu^{-1}\DD\left(\mathcal{M}\right)
    \widehat{\otimes}_{\nu^{-1}\mathcal{O}}
    \OB_{\dR}^{\dag,+}\right)^{\nabla=0}.\qedhere
   \end{split}
  \end{equation*}
\end{proof}

Recall that $d$ is the dimension of $X$.
We consider $\dRfunctor=\dRfunctorB_{\dR}^{\dag,+}$.

\begin{cor}\label{cor:dRplus-and-dR-for-IC}
  Let $\mathcal{M}$ denote a $\Dcap$-ind-Banach module
  which is locally finite free as an $\mathcal{O}$-module.
  Then $\dRfunctor\left(\mathcal{M}\right)$ is concentrated in degree $-d$ and
  \begin{equation*}
    \Ho^{-d}\left(\dRfunctor\left( \I\left(\mathcal{M}\right) \right)\right)
    \cong\I\left(\left(\nu^{-1}\mathcal{M}
    \widehat{\otimes}_{\nu^{-1}\mathcal{O}}
    \OB_{\dR}^{\dag,+}\right)^{\nabla=0}\right).
  \end{equation*}
\end{cor}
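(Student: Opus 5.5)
By definition, $\dRfunctor(\I(\mathcal{M}))=\Sol(\DD(\I(\mathcal{M})))[d]$. The plan is to reduce the corollary to Proposition~\ref{prop:Solplus-and-Sol-for-IC} applied to the dual module $\DD(\mathcal{M})$, and then use that duality is an involution on such modules.

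First, I will check that $\DD(\mathcal{M})$ again satisfies the hypotheses of Proposition~\ref{prop:Solplus-and-Sol-for-IC}. By~\eqref{eq:duality-IC} (that is, \cite[Proposition 7.3]{Bo21}), $\DD(\mathcal{M})\cong\shHom_{\mathcal{O}}(\mathcal{M},\mathcal{O})$ is concentrated in degree $0$. It is a $\Dcap$-ind-Banach module, with connection dual to that of $\mathcal{M}$. Locally it is isomorphic to $\shHom_{\mathcal{O}}(\mathcal{O}^{n},\mathcal{O})\cong\mathcal{O}^{n}$, so it is locally finite free as an $\mathcal{O}$-module. We also need $\DD$ to commute with $\I$ on these objects. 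Since $\mathcal{M}$ has the locally free Spencer-type resolution, both sides can be computed termwise, and the fully faithful strongly monoidal functor $\I$ preserves the relevant finite free $\shHom$'s. So $\DD(\I(\mathcal{M}))\cong\I(\DD(\mathcal{M}))$.

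Next, I apply Proposition~\ref{prop:Solplus-and-Sol-for-IC} to $\DD(\mathcal{M})$. It shows that $\Sol(\I(\DD(\mathcal{M})))$ is concentrated in degree $0$, with
\begin{equation*}
\Ho^{0}\cong\I\left(\left(\nu^{-1}\DD\DD(\mathcal{M})\widehat{\otimes}_{\nu^{-1}\mathcal{O}}\OB_{\dR}^{\dag,+}\right)^{\nabla=0}\right).
\end{equation*}
Shifting by $[d]$ moves this to degree $-d$.

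Finally, I identify $\DD\DD(\mathcal{M})\cong\mathcal{M}$. Using~\eqref{eq:duality-IC} twice gives the bidual $\shHom_{\mathcal{O}}(\shHom_{\mathcal{O}}(\mathcal{M},\mathcal{O}),\mathcal{O})$. The canonical evaluation map $\mathcal{M}\to$ bidual is an isomorphism because $\mathcal{M}$ is locally finite free, as one checks on a local basis. The main point to verify is that this evaluation map is horizontal, i.e.\ compatible with the connections and hence $\Dcap$-linear. I would check this via the Leibniz formula for the dual connection, $(\nabla\phi)(m)=d(\phi(m))-\phi(\nabla m)$; applying it twice, the canonical map intertwines the connections. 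Alternatively, one can use the functoriality of~\eqref{eq:duality-IC} in \cite{Bo21}.

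Since the resulting isomorphism of connections induces an isomorphism on the associated tensor-product connections with $\OB_{\dR}^{\dag,+}$, it also identifies the kernels. This yields the stated formula for $\Ho^{-d}\left(\dRfunctor\left(\I(\mathcal{M})\right)\right)$.
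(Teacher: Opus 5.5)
Your proposal is correct and follows the paper's proof: apply Proposition~\ref{prop:Solplus-and-Sol-for-IC} to $\DD(\mathcal{M})$, shift by $[d]$, and use $\DD^{2}(\mathcal{M})\cong\mathcal{M}$. The paper cites \cite[Theorem 9.17]{Bo21} for biduality and silently suppresses both $\I$ and the check that $\DD(\mathcal{M})$ is again locally finite free; you instead verify these points directly, via~\eqref{eq:duality-IC} and the horizontality of the evaluation map, which is fine.
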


\begin{proof}
  Again, we omit $\I$. Using Proposition~\ref{prop:Solplus-and-Sol-for-IC}
  and $\DD^{2}\left(\mathcal{M}\right)\cong\mathcal{M}$,
  (see~\cite[Theorem 9.17]{Bo21}), we find
  \begin{equation*}
  \begin{split}
    \Ho^{-d}\left( \dRfunctor\left(\mathcal{M}\right)\right)
    =\Ho^{0}\left(\Sol\left( \DD\left(\mathcal{M}\right) \right)\right)
    \cong
    \left(\nu^{-1}\DD^{2}\left(\mathcal{M}\right)\widehat{\otimes}_{\nu^{-1}\mathcal{O}}\OB_{\dR}^{\dag,+}\right)^{\nabla=0} \\
    \cong
    \left(\nu^{-1}\mathcal{M}\widehat{\otimes}_{\nu^{-1}\mathcal{O}}\OB_{\dR}^{\dag,+}\right)^{\nabla=0}.\qedhere
  \end{split}
  \end{equation*}
\end{proof}

\subsection{Compatibility with Scholze's horizontal sections functor}
\label{subsec:Comp-with-Scholzesfunctor}

Now we can relate Scholze's horizontal sections functor to our de Rham functor.
Any vector bundle with flat connection $\cal{E}$ on $X$ is canonically a coadmissible
$\Dcap$-module by~\cite[Theorem B]{MR3846550}. Next,
Theorem~\cite[Theorem 4.4]{Bo21} allows us to view $\cal{E}$ as a $\Dcap$-ind-Banach module.
We can therefore consider the object $\Ho^{-d}\left(\dRfunctor\left( \I\left(\mathcal{E}\right) \right)\right)$.
This is a sheaf on $X_{\proet}$ valued in the left heart $\IndBan_{\I\left(k_{0}\right)}$.
Now compute
\begin{equation*}
  \begin{split}
   |\Ho^{-d}\left(\dRfunctor\left( \I\left(\mathcal{E}\right) \right)\right)|
   &\stackrel{\text{\ref{cor:dRplus-and-dR-for-IC}}}{\cong}
     |\I\left(\left( \nu^{-1}\mathcal{E} \widehat{\otimes}_{\nu^{-1}\mathcal{O}}\OB_{\dR}^{\dag,+}\right)^{\nabla=0}\right)|
   \stackrel{\ref{lem:underlyingabstractpresheaf-is-sheaf}}{\cong}
   |\left( \nu^{-1}\mathcal{E} \widehat{\otimes}_{\nu^{-1}\mathcal{O}}\OB_{\dR}^{\dag,+}\right)^{\nabla=0}| \\
   &\cong\left( \nu^{-1}\mathcal{E} \otimes_{\nu^{-1}\mathcal{O}}|\OB_{\dR}^{\dag,+}|\right)^{\nabla=0}
  \end{split}
\end{equation*}
We note that the tensor product in the second line does not have to be completed
because $\nu^{-1}\mathcal{E}$ is a locally finite free sheaf of $\nu^{-1}\mathcal{O}$-modules.
Together with Theorem~\ref{thm:Cauchy}, this implies:

\begin{thm}[Compatibility with Scholze's horizontal sections functor]\label{thm:compatibility-scholze-cauchy}
There is a canonical isomorphism of $\BB_{\dR}^{+}$-local systems
\begin{equation*}
|\Ho^{-d}\left(\dRfunctor\left( \I\left(\mathcal{E}\right) \right)\right)|
\isomap
\left(
\nu^{-1}\cal E
\otimes_{\nu^{-1}\cal O}
\OB_{\dR}^{+}
\right)^{\nabla=0}
\end{equation*}
\end{thm}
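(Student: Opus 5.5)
The plan is to compose two isomorphisms that are already in place. The computation immediately preceding the statement gives a canonical isomorphism of abelian sheaves on $X_{\proet}$,
\begin{equation*}
|\Ho^{-d}\left(\dRfunctor\left( \I\left(\mathcal{E}\right) \right)\right)|
\cong
\left( \nu^{-1}\mathcal{E} \otimes_{\nu^{-1}\mathcal{O}}|\OB_{\dR}^{\dag,+}|\right)^{\nabla=0}.
\end{equation*}
This comes from three ingredients: Corollary~\ref{cor:dRplus-and-dR-for-IC}, which applies because $\mathcal{E}$ is locally finite free over $\mathcal{O}$; Lemma~\ref{lem:underlyingabstractpresheaf-is-sheaf}, which lets us commute $|\cdot|$ with $\I$; and the observation that the completed tensor product with a locally finite free module agrees with the ordinary one. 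Then Theorem~\ref{thm:Cauchy} supplies the canonical isomorphism
\begin{equation*}
\left( \nu^{-1}\mathcal{E} \otimes_{\nu^{-1}\mathcal{O}}|\OB_{\dR}^{\dag,+}|\right)^{\nabla=0}\isomap\left(\nu^{-1}\cal E\otimes_{\nu^{-1}\cal O}\OB_{\dR}^{+}\right)^{\nabla=0},
\end{equation*}
induced by $|\OB_{\dR}^{\dag,+}|\to\OB_{\dR}^{+}$. Composing the two gives the asserted map.

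Before composing, I will check that the connections used on each side actually match. The first isomorphism uses the connection $\nu^{-1}\nabla_{\mathcal{E}}\widehat{\otimes}\id+\id\widehat{\otimes}\nabla_{\dR}^{\dag,+}$, where $\nabla_{\dR}^{\dag,+}$ is built from the $\Dcap$-bimodule structure. Theorem~\ref{thm:Cauchy} instead uses the restriction of Scholze's $\nabla_{\dR}^{+}$. These agree by Lemma~\ref{lem:localdescription-nablalaplus} together with Lemma~\ref{lem:diff-OBdR+-cauchy}: locally on $\widetilde X$, both are formal differentiation in $Z$ with $dZ_i\mapsto dT_i$. Since agreement of two morphisms of sheaves is a local question, this local check suffices, and the two kernels are literally the same subsheaf.

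Next I will address the structure on each side. Both isomorphisms are linear over $|\BB_{\dR}^{\dag,+}|$. Theorem~\ref{thm:Cauchy} already asserts that the target is a $\BB_{\dR}^{+}$-local system, and its proof shows that the source becomes the same after $\otimes_{\BB_{\dR}^{\dag,+}}\BB_{\dR}^{+}$. I therefore interpret ``isomorphism of $\BB_{\dR}^{+}$-local systems'' as meaning that the composite map, after base change along $|\BB_{\dR}^{\dag,+}|\to\BB_{\dR}^{+}$, is an isomorphism. This matches Theorem~\ref{introthm:easycompatibility-Dcap}. It follows from Corollary~\ref{cor:dager-formal-cauchy-formalism}, applied on affinoid perfectoids $U$ over $\widetilde X$ as in the proof of Theorem~\ref{thm:Cauchy}.

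Canonicity needs one further check: the construction must not depend on the choice of étale coordinates $X\to\TT^d$. Every map involved is defined globally: the morphism $|\OB_{\dR}^{\dag,+}|\to\OB_{\dR}^{+}$, the de Rham functor, and the isomorphism of Corollary~\ref{cor:dRplus-and-dR-for-IC}. Coordinates are used only to verify that these maps are isomorphisms, so the local isomorphisms glue automatically.

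The main obstacle I expect is the connection-compatibility step. Definition~\ref{defn:nablaqlaplus} defines $\nabla_{\dR}^{\dag,+}$ abstractly through duality with $\mathcal{T}$, and Lemma~\ref{lem:localdescription-nablalaplus} only identifies it with $\id\widehat{\otimes}d^q$ on each level $q$. To match it with $\nabla_{\dR}^{+}$, I will use $\nabla_{\dR}^{\dag,+}=\varinjlim\nabla_{\dR}^{q,+}$ together with the fact that $|\OB_{\dR}^{\dag,+}|\to\OB_{\dR}^{+}$ sends $Z_i$ to $Z_i$, as shown by the construction from Lemma~\ref{lem:defn-OBdRplus-bd}.
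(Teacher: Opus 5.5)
Your proposal is correct and is essentially the paper's argument: you compose the isomorphism from Corollary~\ref{cor:dRplus-and-dR-for-IC}, the appendix comparison $|\I(\mathcal{F})|\cong|\mathcal{F}|$ and the uncompleted tensor product with Theorem~\ref{thm:Cauchy}, and your matching of the two connections is exactly the paper's remark following Lemma~\ref{lem:localdescription-nablalaplus}. Your reading of the statement as an isomorphism after base change $\otimes_{\BB_{\dR}^{\dag,+}}\BB_{\dR}^{+}$, as in Theorem~\ref{introthm:easycompatibility-Dcap}, is the correct one: the body versions of this theorem and of Theorem~\ref{thm:Cauchy} omit the base change, and for $\mathcal{E}=\mathcal{O}$ the left side is only $|\BB_{\dR}^{\dag,+}|$. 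So your first paragraph should not assert an isomorphism before base change.
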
 

\appendix


\section{Forgetting ind-Banach structures}
\label{sec:forgetIB}

Let us fix the notation as in~\cite[\S\ref*{ch:functional-analysis}]{WiersigPeriods}.

Let $R$ be a Banach ring. Then the obtain a category of $R$-Banach modules
$\Ban_{R}$. It admits a forgetful functor $M\mapsto|M|$ which sends $M$ to its underlying
abstract ring $|R|$. It extends to a functor
\begin{equation}\label{eq:underlyingabstractmodule}
  \IndBan_{R} \to \Mod(|R|), \text{``}\varinjlim_{i}\text{"}M_{i}\mapsto\varinjlim_{i}|M_{i}|
\end{equation}

\begin{lem}\label{lem:underlyingabstractmodule-commuteswithfinitelimits}
  The functor~(\ref{eq:underlyingabstractmodule})
  commutes with finite limits.
\end{lem}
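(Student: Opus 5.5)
The statement to prove is Lemma~\ref{lem:underlyingabstractmodule-commuteswithfinitelimits}: the functor \eqref{eq:underlyingabstractmodule} commutes with finite limits. Since it is additive, it suffices to show that it preserves finite products (equivalently, finite direct sums) and kernels. A morphism in $\IndBan_{R}$ is represented, after reindexing, by a level-wise morphism of ind-systems. So I would first reduce to this situation. Given $f\colon\text{``}\varinjlim_{i}\text{"}M_{i}\to\text{``}\varinjlim_{j}\text{"}N_{j}$, the standard straightening of morphisms in ind-categories lets us replace it by a morphism of the form $\text{``}\varinjlim_{i\in I}\text{"}f_{i}$ with $f_{i}\colon M_{i}\to N_{i}$ bounded $R$-linear maps, indexed by a common filtered category $I$.

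For finite direct sums, the sum of $\text{``}\varinjlim\text{"}M_{i}$ and $\text{``}\varinjlim\text{"}N_{i}$ in $\IndBan_{R}$ is $\text{``}\varinjlim\text{"}(M_{i}\oplus N_{i})$. The forgetful functor $\Ban_{R}\to\Mod(|R|)$ sends $M_{i}\oplus N_{i}$ to $|M_{i}|\oplus|N_{i}|$, and filtered colimits of abelian groups commute with finite sums. This gives the product statement.

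For kernels, the kernel of a level-wise map $\text{``}\varinjlim\text{"}f_{i}$ in $\IndBan_{R}$ is $\text{``}\varinjlim\text{"}\ker f_{i}$. Each $\ker f_{i}$ is a closed submodule of $M_{i}$, hence Banach, and it is the kernel in $\Ban_{R}$. On underlying modules, $|\ker f_{i}|=\ker|f_{i}|$. Since filtered colimits in $\Mod(|R|)$ are exact, $\varinjlim_{i}\ker|f_{i}|=\ker\bigl(\varinjlim_{i}|f_{i}|\bigr)$, which is what we need.

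The main obstacle is justifying that kernels in $\IndBan_{R}$ are computed level-wise. This rests on two facts: that $\IndBan_{R}$ is the ind-completion of a category with finite limits, and that the forgetful functor on $\Ban_{R}$ preserves kernels because closed kernels carry the subspace norm. I would cite the functional-analytic background in~\cite{WiersigPeriods} for the level-wise description of finite limits of ind-objects (the inclusion $\Ban_{R}\to\IndBan_{R}$ and the formula for ind-limits), and check that nothing about strictness is needed, since only the underlying modules matter.
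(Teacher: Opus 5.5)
Your proposal is correct and follows essentially the same route as the paper. Both arguments compute finite limits in $\IndBan_{R}$ level-wise; the paper does this by citing that filtered colimits in $\IndBan_{R}$ commute with finite limits. Both then reduce to the forgetful functor on $\Ban_{R}$ preserving finite products and kernels/equalisers. You are simply more explicit than the paper about straightening morphisms and about exactness of filtered colimits in $\Mod(|R|)$, both of which the paper leaves implicit.
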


\begin{proof}
  Filtered colimits commute with finite limits by~\cite[Corollary~\ref*{cor:filteredcol-inIndBan-stronglyexact}]{WiersigPeriods},
  thus it suffices to show that the functor $\Ban_{R}\to\Mod(R)$, $M\mapsto|M|$ commutes
  with finite limits. In fact, it suffices to compute that it commutes with finite products and equalisers,
  which is clear.
\end{proof}

\begin{defn}\label{defn:underlyingabstractpresheaf}
  Given a sheaf $\cal{F}$ of $R$-ind-Banach algebras on $X$,
  $|\mathcal{F}|$ is the sheafification of the presheaf $U\mapsto|\mathcal{F}(U)|$
  of abstract $|R|$-modules.
\end{defn}

\begin{lem}\label{lem:underlyingabstractpresheaf-is-sheaf}
  Given a sheaf $\cal{F}$ of $R$-ind-Banach algebras on $X$,
  the canonical map
  $|\mathcal{F}(U)|\isomap|\mathcal{F}|(U)$ is an isomorphism of abstract
  $|R|$-modules if $X$ has a basis by quasi-compact objects.
\end{lem}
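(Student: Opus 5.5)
The statement to prove is the last lemma: for a sheaf $\mathcal{F}$ of $R$-ind-Banach modules, the map $|\mathcal{F}(U)|\to|\mathcal{F}|(U)$ is an isomorphism when $X$ has a basis of quasi-compact objects. Equivalently, the presheaf $P\colon U\mapsto|\mathcal{F}(U)|$ is already a sheaf on that basis, so sheafification does not change its sections there. The plan is to verify the sheaf condition for $P$ directly on quasi-compact objects and then invoke the comparison lemma: a sheaf on a basis extends uniquely to a sheaf on the whole site with the same sections on the basis.

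\textbf{Step 1 (reduction to finite covers).} Let $U$ be quasi-compact and $\{U_i\to U\}$ a covering. By quasi-compactness we may refine it to a finite subcover $U_1,\dots,U_n$. The fibre products $U_i\times_U U_j$ can be covered by quasi-compact basis objects. It therefore suffices to check exactness of
\begin{equation*}
0\to P(U)\to\prod_{i=1}^{n} P(U_i)\rightrightarrows\prod_{i,j}P(U_i\times_U U_j)
\end{equation*}
for finite covers, together with the analogous statement with each $U_i\times_U U_j$ replaced by a cover of it. Passing between a cover and its finite refinement is the standard argument for sheaves on a basis of quasi-compact objects.

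\textbf{Step 2 (transfer of the sheaf condition).} Since $\mathcal{F}$ is a sheaf of ind-Banach modules, $\mathcal{F}(U)$ is the equaliser in $\IndBan_R$ of $\prod_i\mathcal{F}(U_i)\rightrightarrows\prod_{i,j}\mathcal{F}(U_i\times_U U_j)$. Because the cover is finite, these products are finite, so this diagram is a finite limit. Lemma~\ref{lem:underlyingabstractmodule-commuteswithfinitelimits} says that $|\cdot|$ preserves finite limits. Applying it gives exactly the exactness required in Step 1.

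\textbf{Main obstacle.} The delicate point is that $|\cdot|$ need not commute with infinite products, because filtered colimits do not commute with infinite products. This is why the finite reduction is essential. The care goes into the refinement argument: a family compatible on a finite subcover must be shown compatible on the original cover. For this I would again use quasi-compactness of the basis objects covering the overlaps, reducing each check to a finite equaliser.

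Once $P$ is known to be a sheaf on the basis, its sheafification agrees with $P$ on basis objects. This gives the asserted isomorphism $|\mathcal{F}(U)|\cong|\mathcal{F}|(U)$.
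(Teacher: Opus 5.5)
You follow the paper's route. Its proof is the single sentence that the lemma follows from Lemma~\ref{lem:underlyingabstractmodule-commuteswithfinitelimits}: quasi-compactness reduces to finite covers, and there $|\cdot|$ preserves the relevant equaliser.

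Your Step~2 is fine. For a finite cover $U_1,\dots,U_n$ of $U$ it identifies $|\mathcal{F}(U)|$ with the equaliser of $\prod_i|\mathcal{F}(U_i)|\rightrightarrows\prod_{i,j}|\mathcal{F}(U_i\times_U U_j)|$, whether or not the $U_i\times_U U_j$ are quasi-compact.

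\textbf{The gap} is in the step you yourself flag. The sheaf condition on the basis (equivalently, the second pass of the plus construction) only gives a family $(s_i)$ whose restrictions agree on basis objects $W\to U_i\times_U U_j$. To feed it into that equaliser you need $s_i|_{U_i\times_U U_j}=s_j|_{U_i\times_U U_j}$ in $|\mathcal{F}(U_i\times_U U_j)|$. That is separatedness of $V\mapsto|\mathcal{F}(V)|$ at $V=U_i\times_U U_j$.

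Quasi-compactness of the individual $W$ does not help here. You would need finitely many $W$ to cover $U_i\times_U U_j$, i.e.\ $U_i\times_U U_j$ itself quasi-compact. That is a quasi-separatedness condition, satisfied for instance when the basis is stable under fibre products or all coverings are finite. It does not follow from having a quasi-compact basis, and finite-limit preservation alone cannot replace it.

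Here is a counterexample to the finite-limit argument alone. Take $\Phi=\varinjlim_n$ on $\NN$-indexed direct systems of abelian groups; it is exact. Let $Y$ be two copies of $\NN\cup\{\infty\}$ glued along $\NN$. Let $\mathcal{G}_n(V)$ be the $\mathbb{Z}$-valued functions on $V\cap\NN$, with transition maps $f\mapsto f\cdot 1_{\{m>n\}}$ (indicator function). Then $V\mapsto\Phi(\mathcal{G}_{\bullet}(V))$, the germs at infinity, takes the value $Q:=\mathbb{Z}^{\NN}/\mathbb{Z}^{(\NN)}$ on the quasi-compact space $Y$. Its sheafification takes the value $Q\times Q$, one copy for each point at infinity.

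\textbf{The fix.} The lemma is nevertheless true, and the cleanest repair removes the reduction altogether. Your assertion that $|\cdot|$ need not commute with infinite products is false. A bounded $R$-linear map $R\to N$ into a Banach module is the same as an element of $N$. By the definition of morphisms in an ind-category, $\mathrm{Hom}_{\IndBan_R}\bigl(R,\text{``}\varinjlim\text{''}_j N_j\bigr)=\varinjlim_j\mathrm{Hom}_{\Ban_R}(R,N_j)=\varinjlim_j|N_j|$. Hence $|\cdot|\cong\mathrm{Hom}_{\IndBan_R}(R,-)$ is corepresentable and preserves all limits that exist in $\IndBan_R$.

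Applying this to the sheaf condition of $\mathcal{F}$ for an arbitrary cover shows that $V\mapsto|\mathcal{F}(V)|$ is already a sheaf. So $|\mathcal{F}(U)|\cong|\mathcal{F}|(U)$ for every $U$ on any site, with no quasi-compactness hypothesis. If you prefer to keep the finite-limit argument, you must add the hypothesis that fibre products of basis objects are quasi-compact. The paper's one-line proof is silent on this point too.
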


\begin{proof}
  This follows from Lemma~\ref{lem:underlyingabstractmodule-commuteswithfinitelimits}.
\end{proof}

We write $\IndBan_{\I\left(R\right)}:=\LH\left(\IndBan_{R}\right)$ for the left heart.

\begin{lem}\label{lem:LH-underlyingabstractmodule}
  Up to equivalence, there exists a unique right exact functor
  \begin{equation*}
    \IndBan_{\I\left(R\right)}\to\Mod(|R|), E\mapsto|E|
  \end{equation*}
  such that $|M|\cong|\I\left(M\right)|$ for all $R$-ind-Banach modules
  $M$, functorially.
\end{lem}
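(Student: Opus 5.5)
The plan is to define the functor explicitly on presentations, observe that uniqueness is forced by right exactness, and then check that the construction is well defined and right exact, relying on the description of the left heart from~\cite[\S1.2]{Sch99}.

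Recall that every object $E$ of $\IndBan_{\I(R)}=\LH(\IndBan_{R})$ is isomorphic to the cokernel of $\I(f)\colon\I(M)\to\I(N)$ for some monomorphism $f\colon M\to N$ in $\IndBan_{R}$. Morphisms $E\to E'$ in the left heart are represented by commutative squares between such presentations, modulo homotopy: maps $N\to M'$ correct the square.

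\emph{Uniqueness.} Suppose $F$ is right exact and satisfies $F(\I(M))\cong|M|$ functorially. Applying $F$ to the exact sequence $\I(M)\to\I(N)\to E\to 0$ gives
\begin{equation*}
F(E)\cong\operatorname{coker}\left(|M|\xrightarrow{|f|}|N|\right).
\end{equation*}
Functoriality of this identification in the presentation pins down $F$ up to unique isomorphism.

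\emph{Existence.} I would define $|E|:=\operatorname{coker}(|f|)$ and proceed in three steps.

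First, for a morphism given by a square $(a\colon M\to M',\ b\colon N\to N')$, the map $|b|$ descends to the cokernels. If two squares are homotopic via $h\colon N\to M'$, their difference on $|N|$ factors through $|f'|\circ|h|$, so it vanishes on $|E'|$. This gives a well-defined functor.

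Second, independence of the presentation then follows formally, since any two presentations of $E$ are related by isomorphisms in the left heart, which are represented by such squares.

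Third, we need the compatibility $|\I(M)|\cong|M|$. This holds because of the presentation $0\to M$ of $\I(M)$.

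\emph{Right exactness (main obstacle).} The key input is that the forgetful functor $\IndBan_{R}\to\Mod(|R|)$ sends strictly exact sequences to exact sequences. On Banach modules this is clear: strict exactness means that images are closed and kernels are algebraic. It then passes to ind-objects because filtered colimits of abstract modules are exact, in the same way as in the proof of Lemma~\ref{lem:underlyingabstractmodule-commuteswithfinitelimits}.

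Given this, I would first try to invoke the extension result of~\cite[Prop.~1.2.34 ff.]{Sch99}: a functor out of a quasi-abelian category that is (strictly) right exact extends to a right exact functor on the left heart, and that extension is precisely the cokernel formula above. Should that reference not apply verbatim, the fallback is a direct argument. One lifts a right exact sequence $E_1\to E_2\to E_3\to 0$ in the left heart to compatible presentations, which is possible since every epimorphism in the left heart can be lifted to presentations by strict epimorphisms in $\IndBan_{R}$. One then computes the cokernels by a snake-lemma diagram chase in $\Mod(|R|)$, using the exactness of $|\cdot|$ on strictly exact sequences established above.
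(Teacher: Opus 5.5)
Your main route is the paper's. The paper's entire proof is ``by Lemma~\ref{lem:underlyingabstractmodule-commuteswithfinitelimits}, apply \cite[Proposition~1.2.34]{Sch99} to the forgetful functor $\IndBan_R\to\Mod(|R|)$.'' The difference lies in the input you feed that proposition, and there your version is the more to-the-point one. The paper quotes preservation of finite limits, which by itself only controls kernels. What a right exact extension $G$ with $G\circ\I\cong|\cdot|$ actually forces is strict right exactness of $|\cdot|$: apply $G$ to $0\to\I(M)\to\I(N)\to\I(N/M)\to0$, which is exact by \cite[Corollary~1.2.28]{Sch99}. Your check that strictly exact sequences become exact supplies exactly this. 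In particular it shows that strict epimorphisms, being cokernels and hence levelwise quotients $N_i/\overline{f_i(M_i)}$, become surjections. Combined with Lemma~\ref{lem:underlyingabstractmodule-commuteswithfinitelimits} for the kernel side, this gives the needed hypothesis cleanly.

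Your self-contained existence argument, however, is not correct as written, so you should either rely on the citation or repair it. Morphisms in $\IndBan_{\I(R)}$ are derived-category morphisms. They are in general not represented by squares modulo homotopy between \emph{fixed} presentations. Take a non-split strict monomorphism $M\to N$, for example multiplication by $T$ on $R=k_0\langle T\rangle$. The inverse of the isomorphism $[M\to N]\to[0\to N/M]$ has no representing square out of $[0\to N/M]$, since any such square would split $N\to N/M$. Consequently your step ``independence of the presentation follows formally'' fails. Comparing these two presentations of $\I(N/M)$ requires $\operatorname{coker}(|M|\to|N|)\cong|N/M|$, which is right exactness of $|\cdot|$ and would be false for a functor that is not right exact. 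The correct bookkeeping has two parts. First, any morphism in the left heart becomes a square after changing the source presentation (use pullback along $\I(N')\to E'$ and the fact that subobjects of $\I(N)$ lie in the image of $\I$). Second, every square that is an isomorphism in the left heart, i.e.\ a strict quasi-isomorphism whose cone $M\to N\oplus M'\to N'\to0$ is strictly exact, induces an isomorphism on cokernels after applying $|\cdot|$. So your exactness input is already needed to construct the functor, not only to prove that it is right exact. With this repair, the direct argument goes through.
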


\begin{proof}
  Thanks to Lemma~\ref{lem:underlyingabstractmodule-commuteswithfinitelimits},
  we may apply~\cite[Proposition 1.2.34]{Sch99}.
\end{proof}

\begin{defn}\label{defn:underlyingabstractsheaf-LH}
  Given a sheaf $\cal{G}$ of $\I\left(R\right)$-ind-Banach algebras on
  $X$, $|\cal{G}|$ is the sheafification of the presheaf $U\mapsto|\mathcal{G}(U)|$
  of abstract $|R|$-modules.
\end{defn}

\begin{lem}\label{lem:underlyingabstractpresheaf-is-sheaf}
  Given a sheaf $\cal{F}$ of $R$-ind-Banach algebras on $X$,
  there is a canonical isomorphism
  $|\I\left(\mathcal{F}\right)|\isomap|\mathcal{F}|$
  of sheaves of $|R|$-modules if $X$ has only finite coverings.
\end{lem}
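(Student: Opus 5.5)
The plan is to compare both sides on sections over a basis of the site and then sheafify. I would first construct the comparison map. For every $U\in X$, the identity $\I(\mathcal{F})(U)=\I(\mathcal{F}(U))$ holds because $\I$ commutes with the formation of sections, in the sense that it is applied sectionwise to sheaves of ind-Banach modules. Lemma~\ref{lem:LH-underlyingabstractmodule} then gives a functorial isomorphism $|\I(\mathcal{F}(U))|\cong|\mathcal{F}(U)|$. This yields an isomorphism of presheaves
\begin{equation*}
U\mapsto|\I(\mathcal{F})(U)|
\quad\cong\quad
U\mapsto|\mathcal{F}(U)|,
\end{equation*}
and hence, by Definitions~\ref{defn:underlyingabstractpresheaf} and~\ref{defn:underlyingabstractsheaf-LH}, an isomorphism of their sheafifications $|\I(\mathcal{F})|\cong|\mathcal{F}|$. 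Functoriality of the isomorphism in Lemma~\ref{lem:LH-underlyingabstractmodule} guarantees compatibility with restriction maps, and $|R|$-linearity is clear.

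The main obstacle is the first step: that $\I(\mathcal{F})$, computed as a \emph{sheaf} in the left heart, has sections $\I(\mathcal{F}(U))$. A priori, applying $\I$ sectionwise only gives a presheaf. To be a sheaf it must satisfy the sheaf condition in $\IndBan_{\I(R)}$, i.e. an equalizer diagram must be exact there. The functor $\I$ is left exact in the sense that it preserves kernels, equivalently it sends strictly exact sequences to exact ones (\cite[Corollary 1.2.28]{Sch99}). However, the sheaf condition for a covering involves a product over the covering. This is exactly where the hypothesis that $X$ has only finite coverings enters. For a finite covering $\{U_i\to U\}$, the sheaf condition for $\mathcal{F}$ is the exactness of
\begin{equation*}
0\to\mathcal{F}(U)\to\prod_i\mathcal{F}(U_i)\to\prod_{i,j}\mathcal{F}(U_i\times_U U_j).
\end{equation*}
In $\IndBan_R$ this kernel is a strict kernel. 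Finite products are biproducts and are preserved by the additive functor $\I$. So applying $\I$ gives an exact sequence in the left heart, and $U\mapsto\I(\mathcal{F}(U))$ is already a sheaf. I would cite \cite[Lemma 3.33]{Bo21} or the surrounding discussion in \cite[\S3]{Bo21}, where this sheaf-level $\I$ is set up.

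As a sanity check, I would verify that with finite coverings the sheafification steps on both sides behave compatibly. Lemma~\ref{lem:underlyingabstractmodule-commuteswithfinitelimits}, combined with the same finite-product argument, already shows that $U\mapsto|\mathcal{F}(U)|$ is a sheaf. Dually, right exactness of $|\cdot|$ on the left heart together with left exactness on the essential image of $\I$ shows that $U\mapsto|\I(\mathcal{F})(U)|$ is a sheaf. So in fact no sheafification is needed, and the isomorphism holds already at the level of sections.
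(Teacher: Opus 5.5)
Your proof is correct. It rests on the same two ingredients as the paper's proof: the identification $\I(\mathcal{F})(U)=\I(\mathcal{F}(U))$ and the functorial isomorphism $|\I(M)|\cong|M|$ from Lemma~\ref{lem:LH-underlyingabstractmodule}. What differs is how you assemble them.

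The paper argues on sections, via the chain
\begin{equation*}
|\I(\mathcal{F})|(U)\cong|\I(\mathcal{F})(U)|=|\I(\mathcal{F}(U))|\cong|\mathcal{F}(U)|\cong|\mathcal{F}|(U).
\end{equation*}
So it needs both presheaves $U\mapsto|\I(\mathcal{F})(U)|$ and $U\mapsto|\mathcal{F}(U)|$ to already be sheaves. That is where finiteness of coverings enters, through Lemma~\ref{lem:underlyingabstractmodule-commuteswithfinitelimits} and the earlier lemma comparing $|\mathcal{F}(U)|$ with $|\mathcal{F}|(U)$.

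You instead compare the two presheaves and then sheafify. The two outer isomorphisms are never needed, so your main argument does not use finiteness at all. Even your use of finiteness to get $\I(\mathcal{F})(U)=\I(\mathcal{F}(U))$ can be dropped. The functor sending an object $E^{-1}\to E^{0}$ of the left heart to $\operatorname{Coker}(E^{-1}\to E^{0})$ is left adjoint to $\I$. Hence $\I$ preserves arbitrary limits, and applying $\I$ sectionwise to a sheaf gives a sheaf.

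What the paper's route buys is the stronger sectionwise statement, which your last paragraph recovers. Your reason there is the accurate one: on the image of $\I$, the functor $|\cdot|$ agrees with the finite-limit-preserving forgetful functor on $\IndBan_{R}$. The paper cites right exactness of $|\cdot|$ for its first isomorphism, but the sheaf condition is a left-exactness property, so right exactness does not by itself give it.
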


\begin{proof}
  Let $U\in X$ be arbitrary and compute
  \begin{equation*}
    |\I\left(\mathcal{F}\right)|(U)
    \cong|\I\left(\mathcal{F}\right)(U)|
    =|\I\left(\mathcal{F}(U)\right)|
    \cong|\mathcal{F}(U)|
    \stackrel{\text{\ref{lem:underlyingabstractpresheaf-is-sheaf}}}{\cong}|\mathcal{F}|(U).
  \end{equation*}
  The first isomorphism comes from the right exactness of $|\cdot|$,
  see Lemma~\ref{lem:LH-underlyingabstractmodule}.
\end{proof}

\bibliography{RecThm}
\bibliographystyle{amsplain}  

\end{document}